\newtheorem{theorem}{Theorem}[section]
\newtheorem*{theorem*}{Theorem}
\newtheorem{lemma}[theorem]{Lemma}
\newtheorem{fact}[theorem]{Fact}
\newtheorem{definition}[theorem]{Definition}
\newtheorem{question}[theorem]{Question}
\newtheorem{observation}[theorem]{Observation}
\newtheorem{convention}[theorem]{Convention}
\newcounter{claim}[theorem]
\newenvironment{claim}[1][]
{\refstepcounter{claim}\par\textbf{Claim. #1}\rmfamily}
{\par}
\newcommandx{\Ynm}[2][1=n, 2=m]{\mathscr{Y}_{#1,#2}}
\newcommandx{\Znm}[2][1=n, 2=m]{\mathscr{Z}_{#1,#2}}
\newcommandx{\Fnm}[2][1=n, 2=m]{\mathcal{F}_{#1,#2}}
\newcommandx{\hnm}[2][1=n, 2=m]{\eta_{#1,#2}}
\newcommandx{\uz}[2][1=n, 2=m]{u^0_{#1,#2}}
\newcommandx{\uo}[2][1=n, 2=m]{u^1_{#1,#2}}
\newcommandx{\dnm}[2][1=n, 2=m]{d_{#1,#2}}
\newcommandx{\dz}[2][1=n, 2=m]{d^0_{#1,#2}}
\newcommandx{\doo}[2][1=n, 2=m]{d^1_{#1,#2}}
\newcommand{\diam}{\operatorname{diam}}
\newcommand{\ecc}{\operatorname{ecc}}
\newcommand{\piv}{\operatorname{Piv}}
\newcommand{\ps}{\sigma}
\newcommand{\Icwidth}{\gamma}
\begin{document}
% ---------------------------------------------------------------
\title{Flip Graphs, Yoke Graphs and Diameter}
\author{Roy H. Jennings
	\thanks{Department of Mathematics, Bar-Ilan University, Israel. royh.jennings@gmail.com.}
	\thanks{Supported by the Israel Science Foundation grant no. 1970/18.}}
\maketitle
\begin{abstract}
	In this paper we introduce Yoke graphs, a family of flip graphs that generalizes several previously studied families of graphs: colored triangle free triangulations, arc permutations and caterpillars.
	Our main result is the computation of the diameter of an arbitrary Yoke graph.
\end{abstract}

% ---------------------------------------------------------------
\section{Introduction}
% ---------------------------------------------------------------
Over recent decades there has been an increasing interest in graphs on combinatorial objects in which the adjacency relation reflects a local change, for example, flip graphs (see \cite{bose, fabila, jennings_gpm, parlier, pournin, tarjan}).
In this paper we introduce Yoke graphs, a family of flip graphs that generalizes previously studied families of graphs: colored triangle free triangulations \cite{TFT1} (CTFT), arc permutations \cite{elizalde} and geometric caterpillars \cite{yuval}.

The flip graph of triangulations of a convex polygon \cite{tarjan} inspired the definition of a few flip graphs on subsets of triangulations.
One such graph is the CTFT graph.
This graph is closely related to a distinguished lower interval in the weak order on the affine Weyl group $\widetilde{C}_n$. 
The diameter of this flip graph was calculated using lattice properties of the order, see \cite{TFT1}.

An arc permutation in the symmetric group $S_n$ is one in which every prefix (and suffix) forms an interval in $\mathbb{Z}_n$.
The flip graph of arc permutations is the subgraph induced by the set of arc permutations in the Cayley graph associated with $(S_n,S)$, where $S$ is the generating set of $S_n$ consisting of simple reflections.
The diameter of the graph of arc permutations was computed using similarities between the graph and the dominance order on $\mathbb{Z}^n$, see \cite{elizalde}.

To define a geometric caterpillar, start with the complete graph $K_n$ whose vertices are labeled by $\mathbb{Z}_n$. 
Embed $K_n$ in the plane such that its vertices form a convex polygon and its edges are straight line segments.
Denote this geometric graph by $\hat{K}_n$.
A geometric caterpillar of order $n$ is a non-crossing spanning tree of $\hat{K}_n$, such that the vertices of its spine form an interval in $\mathbb{Z}_n$.
For a formal definition see \cite{keller}.
Caterpillars are also called fishbones or combs and were studied by Keller, Perles, Sagan, Wachs and others in various contexts, see, e.g., \cite{perles, martin}.
In the flip graph of geometric caterpillars, two caterpillars are adjacent if one is obtained from the other by moving an edge incident with a leaf along the spine.
The problem of the diameter of the flip graph of geometric caterpillars is open.

The generalization (by Yoke graphs) of these three flip graphs is motivated by their surprising similarities in terms of algebraic, combinatorial and metric properties.
In particular, they carry similar group actions, are intimately related to posets and have similar diameter formulas. 

Our main result, Theorem \ref{thm:yoke_graph_diam}, is the computation of the diameter of an arbitrary Yoke graph.
Since Yoke graphs generalize the flip graphs described above, this theorem provides a unified proof for the previous results.
Our approach is different from the ones in \cite{TFT1} and \cite{elizalde}.
At the heart of the proof lies the idea of transforming a diameter evaluation into an eccentricity problem.

% ---------------------------------------------------------------
% ---------------------------------------------------------------
\section{Yoke Graphs}\label{sec:YokeGraphs}
% ---------------------------------------------------------------
% ---------------------------------------------------------------

\begin{definition}\label{def:yoke_graph}
	Let $n\geq 1$ and $m\geq 0$ be two integers.
	Denote the subset $\{0,1\}\subseteq \mathbb{Z}$ by $P_2$.
	The \textbf{Yoke graph} $\Ynm$ is a graph with vertices corresponding to all $u=(u_0\dots,u_{m+1})\in\mathbb{Z}_n\times P_2^m\times \mathbb{Z}_n$ such that $\sum_{i=0}^{m+1}u_i\equiv 0\ (\mathrm{mod}\ n).$
	Two vertices $u$ and $v$ are adjacent in $\Ynm$, denoted $u\sim v$, if there exists $0\leq i\leq m$ such that $u_j=v_j$ for every $j\notin\{i,i+1\}$ and one of the following two cases holds: either $u_i=v_i+1$ and $u_{i+1}=v_{i+1}-1$, or $u_i=v_i-1$ and $u_{i+1}=v_{i+1}+1$. 
\end{definition}
\begin{convention}\label{conv:cosets_are_integers}
Throughout this paper, elements (cosets) in a quotient group $\mathbb{Z}_n$ of $\mathbb{Z}$ are identified with their smallest non-negative representative in $\mathbb{Z}$.
\end{convention}
According to convention \ref{conv:cosets_are_integers}, the sum $\sum_{i=0}^{m+1}u_i$ in Definition \ref{def:yoke_graph} is a non-negative integer. 
Denote the vertex $(0,\dots,0)\in\Ynm$ by $0$.
For an example of a Yoke graph see $\Ynm[3][3]$ in Figure \ref{fig:Y33}.

\begin{figure}[hbt]
	\centering
	\begin{tikzpicture}[scale=0.4]
	\tikzstyle{every node}=[draw,circle,fill=white,minimum size=4pt,inner sep=0pt]
	
	\draw (0,0) node (01110) [label=left:\scriptsize(01110)] {}
	-- ++(-120:2.0cm) node (01101) [label=left:\scriptsize(01101)] {}
	-- ++(-120:4.0cm) node (01011) [label=right:\scriptsize(01011)] {}
	-- ++(-120:4.0cm) node (00111) [label=left:\scriptsize(00111)] {}
	-- ++(-120:2.0cm) node (21111) [label=left:\scriptsize(21111)] {}
	
	-- ++(0:2.0cm) node (21102) [label={[shift={(-90:0.8)}]:\scriptsize(21102)}] {}
	-- ++(0:4.0cm) node (21012) [label={[shift={(90:-0.3)}]:\scriptsize(21012)}] {}
	-- ++(0:4.0cm) node (20112) [label={[shift={(-90:0.8)}]:\scriptsize(20112)}] {}
	-- ++(0:2.0cm) node (11112) [label=right:\scriptsize(11112)] {}
	
	-- ++(120:2.0cm) node (11100) [label=right:\scriptsize(11100)] {}
	-- ++(120:4.0cm) node (11010) [label=left:\scriptsize(11010)] {}
	-- ++(120:4.0cm) node (10110) [label=right:\scriptsize(10110)] {}
	-- (01110) {};
	
	\draw (01101) -- ++(-60:2.0cm) node (10101) [label={[shift={(-90:0.8)}]:\scriptsize(10101)}] {}
	-- ++(0:4.0cm) node (11001) [label={[shift={(90:-0.3)}]:\scriptsize(11001)}] {}
	-- ++(0:2.0cm) node (20001) [label=right:\scriptsize(20001)] {}
	
	-- ++(-120:2.0cm) node (20010) [label=right:\scriptsize(20010)] {}
	-- ++(-120:4.0cm) node (20100) [label=left:\scriptsize(20100)] {}
	-- ++(-120:4.0cm) node (21000) [label=right:\scriptsize(21000)] {}
	-- ++(-120:2.0cm) node (00000) [label=right:\scriptsize 0] {}
	
	-- ++(120:2.0cm) node (00012) [label=left:\scriptsize(00012)] {}
	-- ++(120:4.0cm) node (00102) [label=right:\scriptsize(00102)] {}
	-- ++(120:4.0cm) node (01002) [label=left:\scriptsize(01002)] {}
	-- ++(120:2.0cm) node (10002) [label=left:\scriptsize(10002)] {}
	
	-- ++(0:2.0cm) node (10011) [label={[shift={(90:-0.3)}]:\scriptsize(10011)}] {}
	-- (10101) {};
	
	\draw (10101) -- (10110);
	\draw (11001) -- (11010) -- (20010);
	\draw (11100) -- (20100) -- (20112);
	\draw (21000) -- (21012) -- (00012);
	\draw (21102) -- (00102) -- (00111);
	\draw (01002) -- (01011) -- (10011);
	
	\end{tikzpicture}
	\caption{$\Ynm[3][3]$} \label{fig:Y33}
\end{figure}
The name "Yoke graph" is derived from the shoulder yoke, a tool that can be used to carry two buckets.
Based on this analogy, we will sometimes refer to the entries $u_0$ and $u_{m+1}$ of a vertex $u$ in $\Ynm$ as \textbf{buckets}.

As noted in the introduction, important special cases of yoke graphs include the following. The graph $\Ynm[n][n-4]$ is isomorphic to the CTFT graph.
The map defined in \cite[Definition 2.8]{TFT1} (which was used to calculate the cardinality of the CTFT graph) induces an isomorphism between this graph and $\Ynm[n][n-4]$.
The graph $\Ynm[n][n-2]$ is isomorphic to the flip graph of arc permutation. It can be verified that the encoding $\psi$, defined in \cite[Section 6.2]{elizalde}, induces an isomorphism between the graph of arc permutations and $\Ynm[n][n-2]$.
Similarly, it can be shown that $\Ynm[n][n-3]$ is isomorphic to the flip graph of geometric caterpillars.

Yoke graphs $\Ynm$ are Schreier graphs of the affine Weyl group of type $\tilde{C}_m$ whenever $m>1$.
This fact is naturally extended from previously known results.
For example, by \cite[Proposition 3.2]{TFT1}, the CTFT graph $\Ynm[n][n-4]$ is a Schreier graph of the affine Weyl group of type $\tilde{C}_{n-4}$ for $n>5$.
Also, by \cite[Corollary 10.4]{elizalde}, the arc permutations graph $\Ynm[n][n-2]$ is a Schreier graph of the affine Weyl group of type $\tilde{C}_{n-2}$ for $n>3$.
For more details see \cite{jennings}.

The main result of this paper is the following. \begin{theorem}\label{thm:yoke_graph_diam}
	If $m\leq n$, then $diam(\Ynm) = \lfloor\frac{n(m+1)}{2}\rfloor$.
	
	If $1=n\leq m$, then $\diam(\Ynm)=\binom{\lceil \frac{m}{2}\rceil + 1}{2} + \binom{\lfloor \frac{m}{2}\rfloor + 1}{2}$; Otherwise, $2\leq n\leq m$ and
	\begin{enumerate}
		\item if $2\divides (m-n)$ or $n\leq\lceil\frac{m+1}{2}\rceil$, then 
		$$\diam(\Ynm) = \dz = \binom{\lfloor\frac{m+n}{2}\rfloor+1}{2} + \binom{\lceil\frac{m-n}{2}\rceil+1}{2};$$
		\item otherwise, $$\diam(\Ynm) = \dz + n-\lceil\frac{m+1}{2}\rceil.$$
	\end{enumerate}
\end{theorem}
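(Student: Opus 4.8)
The plan is to transfer the diameter to the eccentricity of the vertex $0$, and then to evaluate that eccentricity by reducing the graph metric to an explicit transportation-type expression. For a vertex $u$, set $\pi_j(u)=u_0+u_1+\dots+u_j$ for $0\le j\le m$; after fixing a lift of $u_0$ to $\mathbb{Z}$ (Convention \ref{conv:cosets_are_integers}), a flip at positions $(i,i+1)$ changes $\pi_i$ by $\pm 1$ and leaves every other $\pi_j$ fixed. Hence $\Ynm$ is isomorphic to the quotient, by the shift $\pi\mapsto\pi+n\mathbf 1$ with $\mathbf 1=(1,\dots,1)$, of the ``staircase'' graph $\Lambda=\{\pi\in\mathbb{Z}^{m+1}:\pi_j-\pi_{j-1}\in\{0,1\}\}$ whose edges are $\pi\sim\pi\pm e_j$. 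I would first prove the routing lemma that $\Lambda$ is geodesically $\ell_1$, i.e.\ $d_\Lambda(\pi,\rho)=\sum_{j=0}^m|\pi_j-\rho_j|$: a monotone transport between two staircases can always be carried out inside the staircase region. This yields
\[
d_{\Ynm}(u,v)=\min_{c\in\mathbb{Z}}\ \sum_{j=0}^m\bigl|\pi_j(u)-\pi_j(v)-cn\bigr|.
\]

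\emph{From diameter to eccentricity.} The right-hand side depends on $(u,v)$ only through $\delta=\pi(u)-\pi(v)$, and, as $(u,v)$ runs over ordered pairs of vertices, $\delta$ runs over all vectors with $\delta_j-\delta_{j-1}\in\{-1,0,1\}$. Since the quantity to be maximized is unchanged when $\delta$ is replaced by $\delta-cn\mathbf 1$, an exchange argument shows that a maximizing $\delta^{*}$ may be taken non-decreasing with steps in $\{0,1\}$, hence $\delta^{*}\in\Lambda$, hence realized by $v=0$ and the corresponding path $u$. Together with the symmetries of $\Ynm$ (the reversal $u\mapsto(u_{m+1},\dots,u_0)$ and the bucket action $u\mapsto(u_0+t,u_1,\dots,u_m,u_{m+1}-t)$) this gives
\[
\diam(\Ynm)=\ecc(0)=\max_{\pi\in\Lambda}\ \min_{c\in\mathbb{Z}}\ \sum_{j=0}^m|\pi_j-cn|,
\]
the promised reduction.

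\emph{Solving the max--min.} The inner minimum is large precisely when the profile $(\pi_j)$ sweeps across several windows modulo $n$ while staying balanced between consecutive multiples of $n$; the extremal profiles are ramps $\pi_j\approx j$ that are permitted to \emph{dwell} at the mid-period heights $\equiv\lfloor n/2\rfloor\ (\mathrm{mod}\ n)$. On such a profile the index set $\{0,\dots,m\}$ splits into two consecutive arithmetic ramps of lengths $\lfloor\frac{m+n}{2}\rfloor$ and $\lceil\frac{m-n}{2}\rceil$, whence the value $\binom{\lfloor(m+n)/2\rfloor+1}{2}+\binom{\lceil(m-n)/2\rceil+1}{2}=\dz$. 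When $2\mid(m-n)$ or $n\le\lceil\frac{m+1}{2}\rceil$, a dwell-free ramp already attains this optimum and nothing does better, so $\diam=\dz$; otherwise one extra dwell at height $\lfloor n/2\rfloor$ is forced and contributes exactly $n-\lceil\frac{m+1}{2}\rceil$ more. The degenerate ranges $m\le n$ (flat profile, value $\lfloor\frac{n(m+1)}{2}\rfloor$) and $n=1$ (full ramp, value $\binom{\lceil m/2\rceil+1}{2}+\binom{\lfloor m/2\rfloor+1}{2}$) fall out of the same analysis, which is why the three clauses of the statement agree on their overlaps.

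\emph{Where the difficulty lies.} The principal obstacle is the routing lemma $d_\Lambda=\ell_1$: in transporting one staircase to another, a coordinate one wishes to raise can be pinned from the right by a flat stretch (or symmetrically from the left), so one must show that lifting the obstructing stretch and later restoring it never incurs an extra step — this is the role of the $\piv/\wall/\avg/\len$ bookkeeping, which records where a route is pinned and where corrections accumulate. The second delicate point is the \emph{tightness} of the case split: that in case 2 no profile beats the ramp-with-one-dwell, and that the dwell is genuinely forced there (so $\dz$ is not attained), which is exactly where the parities of $m$, $n$ and $m-n$ and the comparison of $n$ with $\lceil\frac{m+1}{2}\rceil$ all enter. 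The remaining ingredients — the isomorphism with the quotient of $\Lambda$, the exchange argument producing a non-decreasing $\delta^{*}$, and the triangular-number arithmetic — are routine.
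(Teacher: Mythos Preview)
Your partial-sum encoding $\pi_j(u)=\sum_{i\le j}u_i$ and the distance formula $d_{\Ynm}(u,v)=\min_{c\in\mathbb Z}\sum_j|\pi_j(u)-\pi_j(v)-cn|$ are a genuinely different route from the paper's. The paper never writes a closed-form distance: it introduces the auxiliary graph $\Znm$ and proves $\diam(\Ynm)=\ecc_{\Znm}(0)$ by delicate geodesic surgery (Lemmas~\ref{lem:handle_zeros_in_geodesics_znm}--\ref{lem:dist_preserved_under_phi}), then computes that eccentricity combinatorially via pivot paths. Your reduction is cleaner once the routing lemma is in hand: since $F(\delta)=\min_c\sum_j|\delta_j-cn|$ depends only on the multiset $\{\delta_j\}$, and since any sequence with increments in $\{-1,0,1\}$ hits every integer between its extreme values, the non-decreasing rearrangement of $\delta$ lies in $\Lambda$. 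That single observation yields $\diam(\Ynm)=\ecc_{\Ynm}(0)$ directly and in fact answers the paper's Question~6.1, bypassing $\Znm$ entirely.

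You have, however, mislocated the difficulty. The routing lemma $d_\Lambda=\ell_1$ is short: if $\pi_j>\rho_j$ at the least such $j$, the step $\pi_j-\pi_{j-1}$ is forced to be $1$; following the maximal run of $1$-steps rightward one reaches an index $j'$ with $\pi_{j'}>\rho_{j'}$ and right-step $0$ (or $j'=m$), where $\pi_{j'}$ may be decremented. No ``lifting and later restoring'' occurs---the obstructing stretch is itself above $\rho$---and the paper's $\piv$/wall apparatus plays no role here (the symbols $\avg$, $\len$ you cite do not occur in the paper). The substantive work, occupying all of Section~5, is the \emph{upper bound} in your max--min: showing that no staircase $\pi$ beats the ramp (or ramp-with-dwell) profile. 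Your sentence ``a dwell-free ramp already attains this optimum and nothing does better'' is precisely the assertion requiring proof. The paper's argument splits according to whether $v$ has a pivot near $m/2$---in your language, whether some $\pi_j$ lies close to a multiple of $n$---and when not, according to the sign of $\sum_{i\in I_c}v_i$; each branch (Lemmas~\ref{lem:close_to_middle_pivot}, \ref{lem:middle_interval_is_n}, \ref{lem:zero_middle_interval}) needs its own estimate, and the parity/threshold dichotomy you mention only emerges from comparing $\dz$ with $\doo$ after those bounds are in place. Your proposal gives the lower bound and the right extremizers but no mechanism for the matching upper bound; that is the genuine gap.
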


The proof of Theorem \ref{thm:yoke_graph_diam} appears at the end of Subsection \ref{subsec:n_leq_m}.
At the heart of the proof lies the idea to convert the problem of computation of diameter to that of computation of eccentricity.
Specifically, in Section \ref{sec:dYokeGraphs}, we introduce dYoke graphs $\Znm$, which are closely related to Yoke graphs. 
In Section \ref{sec:diam_ecc}, we show that the diameter of $\Ynm$ is equal to the eccentricity of $0$ in $\Znm$.

% ---------------------------------------------------------------
\section{dYoke Graphs}\label{sec:dYokeGraphs}
% ---------------------------------------------------------------
As noted in Section \ref{sec:YokeGraphs}, at the heart of the calculation of the diameter of $\Ynm$ lies the idea of converting the diameter problem of one graph into an eccentricity problem in another graph.
To this end, we introduce a family of graphs.

\begin{definition}
	Let $n\geq 1$ and $m\geq 0$ be two integers. 
	Denote the subset $\{-1,0,1\}\subseteq \mathbb{Z}$ by $P_3$.
	The \textbf{dYoke graph} $\Znm$ is a graph with vertices corresponding to all $u=(u_0\dots,u_{m+1})\in\mathbb{Z}_n\times P_3^m\times \mathbb{Z}_n$ such that $\sum_{i=0}^{m+1}u_i\equiv 0\ (\mathrm{mod}\ n).$ 
	Two vertices $u$ and $v$ are adjacent in $\Znm$ if there exists $0\leq i\leq m$ such that $u_j=v_j$ for every $j\notin\{i,i+1\}$ and one of the following two cases holds: either $u_i=v_i+1$ and $u_{i+1}=v_{i+1}-1$, or $u_i=v_i-1$ and $u_{i+1}=v_{i+1}+1$.
\end{definition}
Note that vertices in Yoke graphs and dYoke graphs are determined by their first (or last) $m+1$ entries, since $\sum_{i=0}^{m+1}u_i\equiv 0(\bmod n)$.
We denote the vertex $(0,\dots,0)\in\Znm$ by $0$.
We will sometimes refer to the entries $u_0$ and $u_{m+1}$ of a vertex $u$ in $\Znm$ as \textbf{buckets}, similarly to Yoke graphs.
In the first case of the adjacency relation, where $u_i=v_i+1$ and $u_{i+1}=v_{i+1}-1$ for some $0\leq i\leq m$, we say that $u$ is obtained from $v$ by \textbf{shifting} a unit from entry $i+1$ to the \textbf{left}, and write $u=\overleftarrow{s}_{i}(v)$.
In the second case, where $u_i=v_i-1$ and $u_{i+1}=v_{i+1}+1$, we say that $u$ is obtained from $v$ by \textbf{shifting} a unit from entry $i$ to the \textbf{right}, and write $u=\overrightarrow{s}_{i}(v)$.
When $v_0=0$ ($v_{m+1}=0$) we say that the left (right) bucket is \textbf{empty}.

For example, if $v=(3,0,-1,1,2)\in\Znm[5][3]$, then $\overleftarrow{s}_2(v)=(3,0,0,0,2)$ and $\overrightarrow{s}_1(v)=(3,-1,0,1,2)$.
If $i<j$, we say that $v_j$ is an \textbf{entry to the right} of $v_i$ and that $v_i$ is an \textbf{entry to the left} of $v_j$. 
For convenience, we write $s_i$ to indicate a unit shift between the entries indexed by $i$ and $i+1$ without specifying its direction.

If $1\leq k\leq m$ and $v_k=-1$, then a unit shift from entry $k$ is not possible.
Similarly, if $v_k=1$, then a unit shift to entry $k$ not possible. 
However, for every $0\leq i\leq m$, $\overleftarrow{s}_i$ and $\overrightarrow{s}_i$ still induce functions from $\Znm$ to $\Znm$, in which if a unit shift $s_i(v)$ is not possible for some $v\in\Znm$, then $v$ is a fixed point of $s_i$.
Denote the set $\{\overleftarrow{s}_i,\overrightarrow{s}_i:0\leq i\leq m\}$ by $\Fnm$.
If $m=0$, then $\overleftarrow{s}_0$ and $\overrightarrow{s}_0$ are inverses of each other and $\Fnm[n][0]$ generates the cyclic group $\mathbb{Z}_n$. 
If $m>0$, then none of the functions in $\Fnm$ are bijective and $\Fnm$ generates a semigroup.

A \textbf{word $w$ in the letters} $\Fnm$ is a sequence $f_d\cdots f_1$ where $f_1,\dots,f_d\in\Fnm$.
% Note that $w$ corresponds to an element in the semigroup generated by $\Fnm$.
Let $P$ be a path $(v=v^0\sim\dots \sim v^d=u)$ in $\Znm$. 
In this paper, when we write $u\sim v$, it implies that $(u,v)$ is an edge in the graph (and that $u\neq v$, since the graphs in this paper have no loops).
Clearly, there is a unique word $f_d\cdots f_1$ such that $f_t(v^{t-1})=v^t$ for every $1\leq t\leq d$ or, equivalently,  $f_t\cdots f_1(v)=v^t$ for every $1\leq t\leq d$.
We say that $f_d\cdots f_1$ is the \textbf{word corresponding to the path} $P$ from $v$ to $u$. 

Let $f_d\cdots f_1$ be the word corresponding to a path from $v$ to $u$ in $\Znm$.
If $f_d\cdots f_{t+1}f_t\cdots f_1(v) = f_d\cdots f_{t}f_{t+1}\cdots f_1(v)$ (changing the order of $f_t$ and $f_{t+1}$), then we say that $f_t$ and $f_{t+1}$ \textbf{relatively commute in} (the path) $f_d\cdots f_1(v)$.
%More generally, if $f_{\pi(d)}\cdots f_{\pi(1)}(v)$ shifts $d$ units, for some permutation $\pi$ in the symmetric group $S_d$, then $f_{\pi(d)}\cdots f_{\pi(1)}$ is also a word corresponding to a path from $v$ to $u$, since the number of unit shifts to and from every entry has not changed.
Note that the fact that $f_t$ and $f_{t+1}$ relatively commute in $f_d\cdots f_1(v)$, does not imply that $f_t$ and $f_{t+1}$ commute (as elements in the semigroup generated by $\Fnm$). 
For example, if $f_d\cdots f_1$ corresponds to a path from $v$ to $u$ such that $f_t=\overrightarrow{s}_i$ and $f_{t+1}=\overleftarrow{s}_{i+1}$ for some $1\leq t \leq d-1$ and $0\leq i\leq m-1$, then $f_t$ and $f_{t+1}$ relatively commute in $f_d\cdots f_1(v)$ (since a path has no repetitions and both $f_t$ and $f_{t+1}$ indeed shift a unit) but do not commute (in the semigroup).
If, however, $f_t$ and $f_{t+1}$ commute, then they also relatively commute in $f_d\cdots f_1(v)$.
Clearly, two distinct $s_i$ and $s_j$ in $\Fnm$ commute if and only if $|i-j|>1$.

\begin{lemma}[\sc Shift Direction Lemma] \label{lem:shift_direction_lemma_Znm}
	Let $f_d\cdots f_1$ be the word corresponding to a geodesic in $\Znm$.
	For every $i$ with $0\leq i\leq m$, all of the instances of $s_i$ in $f_d\cdots f_1$ shift in the same direction.
\end{lemma}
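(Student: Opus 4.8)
The plan is to argue by contradiction: suppose $f_d\cdots f_1$ is a geodesic in which, for some fixed index $i$, both $\overleftarrow{s}_i$ and $\overrightarrow{s}_i$ occur. Consider two consecutive occurrences of the letter $s_i$ (occurrences of $s_i$ with no other $s_i$ between them) that shift in opposite directions; after reducing to this situation we may assume one of them is $\overrightarrow{s}_i$ and the other is $\overleftarrow{s}_i$, occupying positions $t<t'$ in the word, with every letter strictly between them being some $s_j$ with $j\neq i$. The key structural observation is that $s_i$ acts only on the pair of entries $(u_i,u_{i+1})$, so it commutes (as a semigroup element, hence relatively commutes in any path) with every $s_j$ for which $|i-j|>1$; the only letters between positions $t$ and $t'$ that can fail to commute with $s_i$ are $s_{i-1}$ and $s_{i+1}$.

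First I would handle the case where \emph{no} $s_{i-1}$ and no $s_{i+1}$ occur between positions $t$ and $t'$. Then I can use the commutation relations to slide the letter at position $t'$ leftward past all the intervening letters until it is adjacent to the letter at position $t$, producing a new geodesic of the same length in which $\overrightarrow{s}_i$ is immediately followed by $\overleftarrow{s}_i$ (or vice versa). But $\overleftarrow{s}_i\overrightarrow{s}_i$ and $\overrightarrow{s}_i\overleftarrow{s}_i$ are each either the identity on the relevant vertex or fail to be a valid consecutive pair in a path (a path has no repeated vertices): shifting a unit right from entry $i$ and then immediately shifting a unit left from entry $i+1$ returns to the starting vertex, contradicting that we have a geodesic (a shorter path exists by deleting these two steps). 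This contradiction rules out this case.

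The remaining, and harder, case is when some $s_{i-1}$ or $s_{i+1}$ does occur strictly between positions $t$ and $t'$. Here I would track what the entry $u_i$ (resp. $u_{i+1}$) is doing along the path. Because the letter at position $t$ is $\overrightarrow{s}_i$, it requires $u_i\geq 0$ at that moment and decrements $u_i$; because the later letter at position $t'$ is $\overleftarrow{s}_i$, it requires $u_{i+1}\geq 0$ at that moment and... — more usefully, I would look at the net effect on $u_i$ over the segment $[t,t']$ and argue that the value of $u_i$ must return to something that allows a cheaper route. The cleanest version: among all geodesics between the two endpoints, pick one minimizing the number of "direction changes" of $s_i$; then between an adjacent pair of oppositely-directed $s_i$'s, use the relative-commutation moves to push the two $s_i$ letters toward each other. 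Any blocking $s_{i\pm1}$ between them can be dealt with by a local rewriting (e.g. $\overrightarrow{s}_i\,\overleftarrow{s}_{i+1}$ relatively commute, as the paper already notes; similarly the braid-type interactions between $s_i$ and $s_{i\pm 1}$ on a three-entry window can be analyzed by finite case-check, since each of $u_{i-1},u_i,u_{i+1}$ lives in $P_3$ near the active window) to either remove the blocker or reduce the count of direction changes, contradicting minimality.

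The main obstacle I anticipate is precisely this local analysis of how $s_i$ interacts with $s_{i-1}$ and $s_{i+1}$ when they are interleaved: unlike the clean commutation for $|i-j|>1$, the adjacent generators genuinely interact, and I need either a short braid-type relation or an explicit finite case analysis on the bounded entries in the three-cell window $(u_{i-1},u_i,u_{i+1})$ (together with the buckets, if $i\in\{0,m\}$) to show that an oppositely-directed pair of $s_i$'s can always be simplified without lengthening the path. I would expect the paper to isolate this as the crux and resolve it by showing that if $s_i$ shifts right at some point and left at a later point, one can find an intervening configuration where $u_i$ (or $u_{i+1}$) is forced to take a value making one of the two shifts redundant, yielding a strictly shorter path — contradicting the geodesic hypothesis.
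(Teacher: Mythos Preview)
Your overall strategy---contradiction, pick a closest oppositely-directed pair of $s_i$'s, commute past far-away letters, and cancel if adjacent---matches the paper's. The gap is in how you handle the blockers $s_{i\pm1}$, which you correctly flag as the crux but leave to a ``finite case-check'' that you do not carry out. The paper does \emph{not} resolve this by any braid-type rewriting or window case analysis; instead it uses one clean idea you are missing: minimize the gap $|t_2-t_1|$ over \emph{all} indices $i$ simultaneously (and over all geodesic words), not just for one fixed $i$.

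Here is why that dissolves the difficulty. With $f_{t_1}=\overrightarrow{s}_i$ and $f_{t_2}=\overleftarrow{s}_i$ chosen to minimize $t_2-t_1$ globally, if $f_{t_1+1}$ fails to relatively commute with $f_{t_1}$ then $f_{t_1+1}\in\{\overrightarrow{s}_{i-1},\overrightarrow{s}_{i+1}\}$. Say $f_{t_1+1}=\overrightarrow{s}_{i-1}$; the failure of relative commutation forces the $i$th entry to equal $1$ immediately after step $t_1+1$. But no further $s_i$ occurs before step $t_2$, and $f_{t_2}=\overleftarrow{s}_i$ requires that entry to be at most $0$; hence some $\overleftarrow{s}_{i-1}$ must occur at a position $k$ with $t_1+1<k<t_2$. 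Now $(t_1+1,k)$ is an oppositely-directed pair for the index $i-1$ with strictly smaller gap, contradicting the global minimality. The case $f_{t_1+1}=\overrightarrow{s}_{i+1}$ is symmetric. Thus the single move of globalizing the minimization replaces your anticipated local analysis entirely.
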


\begin{proof}
	Let $v$ and $u$ be two vertices in $\Znm$.
	If $m=0$, then a word corresponding to a geodesic from $v$ to $u$ is of the form $f^k$ where $f\in\Fnm[n][0]$ and $0\leq k\leq \lfloor\frac{n}{2}\rfloor$.
	Otherwise $m>0$. 
	Assume to the contrary that there exist words corresponding to geodesics from $v$ to $u$ not satisfying the lemma.
	Denote the set of such words by $\mathcal{W}$.
	For every word $f_d\cdots f_1$ in $\mathcal{W}$, there exist $i, t_1, t_2$ with $0\leq i\leq m$ and $1\leq t_1, t_2\leq d$ such that $f_{t_1}=\overrightarrow{s}_{i}$ and $f_{t_2}=\overleftarrow{s}_{i}$.
	Let $w=f_d\cdots f_1$ be a word in $\mathcal{W}$ in which $\min\{|t_2-t_1|: f_{t_1}=\overrightarrow{s}_{i},f_{t_2}=\overleftarrow{s}_{i}\}$ is minimal. 
	Let $1\leq t_1< t_2\leq d$ be a pair of indices obtaining this minimum and assume without loss of generality that $f_{t_1}=\overrightarrow{s}_i$ and $f_{t_2}=\overleftarrow{s}_i$ where $0\leq i\leq m$.

	If $t_2=t_1+1$, then the word obtained by deleting both $f_{t_1}$ and $f_{t_2}$ from $f_d\cdots f_1$ is a word corresponding to a path from $v$ to $u$, contradicting the minimality of $d$.
	Therefore, we can assume that $t_2-t_1>1$.
	Note that $f_{t_1}$ does not relatively commute with $f_{t_1+1}$ by the minimality of $t_2-t_1$ in the choice of $w$.
	Therefore we can assume that $f_{t_1+1}\in\{\overrightarrow{s}_{i-1}, \overrightarrow{s}_{i+1}\}$ (the only two elements in $\Fnm$ that do not necessarily relatively commute with $f_{t_1}$).
	
	If $f_{t_1+1}=\overrightarrow{s}_{i-1}$, then we can assume that $(f_{t_1+1}\cdots f_1(v))_i=1$, otherwise $f_{t_1}$ and $f_{t_1+1}$ relatively commute in $f_d\cdots f_1(v)$. 
	Moreover, there must be some $k$ with $t_1+1<k<t_2$ such that $f_k=\overleftarrow{s}_{i-1}$, since $f_{t_2}=\overleftarrow{s}_i$.
	Similarly, if $f_{t_1+1}=\overrightarrow{s}_{i+1}$, then we can assume that $(f_{t_1+1}\cdots f_1(v))_{i+1}=-1$ and there must be $f_k=\overleftarrow{s}_{i+1}$ for some $k$ with $t_1+1<k<t_2$.
	Both cases contradicting the minimality of $t_2-t_1$.
\end{proof}

% --------------------------------------------------------------
\section{From Diameter to Eccentricity}\label{sec:diam_ecc}
% ---------------------------------------------------------------
Recall that the eccentricity of a vertex $v$ in a graph is the maximum distance between $v$ and any other vertex.
In this section, we show (Theorem \ref{thm:ecc_eq_diam}) that the diameter of $\Ynm$ is equal to the eccentricity of $0$ in $\Znm$.
Every Yoke graph $\Ynm$ is naturally embedded in the dYoke graph $\Znm$ as an induced subgraph on the vertices with no negative entries. 
Note that for every two vertices $v,u\in\Ynm$, the difference $v-u=(v_0-u_0,\dots,v_{m+1}-u_{m+1})$ is in $\Znm$.

\begin{definition}
	For every $u\in\Ynm$ let $\varphi_u:\Ynm\rightarrow \Znm$ be the map (on vertices) defined by $\varphi_u(v)=v-u$. 
\end{definition}

\begin{observation}\label{obs:embedding_Ynm_in_Znm}
	For every $u\in\Ynm$, $\varphi_u$ is a graph isomorphism between $\Ynm$ and the subgraph induced by $\varphi_u(\Ynm)$ (in $\Znm$).
\end{observation}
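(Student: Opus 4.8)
The plan is to verify directly that $\varphi_u$ is a well-defined graph homomorphism onto its image which admits an inverse (namely the translation $v \mapsto v+u$), so that it is an isomorphism onto the induced subgraph on $\varphi_u(\Ynm)$. First I would check that $\varphi_u$ lands in $\Znm$: given $v \in \Ynm$, each coordinate of $v-u$ satisfies $v_i - u_i \in \{0,1\} - \{0,1\} \subseteq P_3$ for $1 \le i \le m$, the buckets $v_0-u_0$ and $v_{m+1}-u_{m+1}$ lie in $\mathbb{Z}_n$, and $\sum_i (v_i - u_i) \equiv 0 - 0 \equiv 0 \pmod n$ since both $v$ and $u$ are vertices of $\Ynm$. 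Hence $\varphi_u(v) \in \Znm$. Since $\varphi_u$ is the restriction of the bijection $w \mapsto w - u$ on $\mathbb{Z}^{m+2}$ (lifting coordinates via Convention \ref{conv:cosets_are_integers} and then reducing), it is injective on $\Ynm$; its image $\varphi_u(\Ynm)$ is precisely the set of $w \in \Znm$ with $w + u \in \Ynm$, i.e.\ with $w_i + u_i \in \{0,1\}$ for all middle coordinates.

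Next I would check that $\varphi_u$ preserves and reflects adjacency. Both the adjacency relation in $\Ynm$ and that in $\Znm$ are phrased identically: $w \sim w'$ iff there is an index $i$ with $w_j = w'_j$ for $j \notin \{i, i+1\}$ and $(w_i, w_{i+1}) = (w'_i \pm 1, w'_{i+1} \mp 1)$. This condition is invariant under the coordinatewise translation by $u$, because the differences $w_i - w'_i$ and $w_{i+1} - w'_{i+1}$ are unchanged by adding the same constant $u_i$ (resp.\ $u_{i+1}$) to both $w_i$ and $w'_i$; and the equalities $w_j = w'_j$ are likewise preserved. The one subtlety is that these $\pm 1$ relations are relations in $\mathbb{Z}_n$ for the bucket coordinates $i \in \{0, m+1\}$, so one must note that "add $1$ mod $n$" commutes with "subtract $u_i$ mod $n$"; this is immediate since $\mathbb{Z}_n$ is abelian. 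Thus $v \sim v'$ in $\Ynm$ if and only if $\varphi_u(v) \sim \varphi_u(v')$ in $\Znm$, for all $v, v' \in \Ynm$. In particular $\varphi_u$ maps $\Ynm$ isomorphically onto a subgraph of $\Znm$ whose vertex set is $\varphi_u(\Ynm)$ and whose edges are exactly those edges of $\Znm$ with both endpoints in $\varphi_u(\Ynm)$ — i.e.\ the induced subgraph.

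There is essentially no hard step here: the statement is a formal consequence of the fact that both graphs are defined by the same translation-invariant local rule on integer (and $\mathbb{Z}_n$) tuples, and that $\varphi_u$ is the restriction of an invertible translation. The only point requiring a line of care is the interaction of the translation with reduction mod $n$ in the two bucket coordinates, handled as above. I would therefore keep the write-up short, spending one sentence each on: well-definedness into $\Znm$, injectivity, the translation-invariance of the adjacency rule, and the conclusion that the image subgraph is induced because adjacency is reflected as well as preserved.
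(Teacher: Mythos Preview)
Your proposal is correct. The paper states this as an observation without proof, so there is no argument to compare against; your verification --- checking that $\varphi_u$ lands in $\Znm$, is injective, and that the adjacency rule is invariant under coordinatewise translation (including in the $\mathbb{Z}_n$ bucket coordinates) --- is exactly the routine check the paper implicitly leaves to the reader.
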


The following lemma is essential for the proof of Lemma \ref{lem:dist_preserved_under_phi}.
We start with a given $z\in\Znm$ such that $z_i=0$ for some $1\leq i\leq m$, and a geodesic $P$ between $z$ and $0$.
We show that $P$ can be transformed into a geodesic $P'$ in which the $i$th entry is either non-negative or non-positive along the path. We can do this without changing the sets of values of other entries.

\begin{lemma}\label{lem:handle_zeros_in_geodesics_znm}
	Let $z\in\Znm$ such that $z_i=0$ for some $i$ with $1\leq i\leq m$ and let $P=(z=x^0\sim x^1\sim\dots\sim x^d=0)$ be a geodesic between $z$ and $0$.
	Then a geodesic $P'=(z=y^0\sim y^1\sim\dots\sim y^d=0)$ exists such that:
	{ % \renewcommand\labelenumi{(Property $\theenumi^*$)}
		\begin{enumerate}
			\item either $y^t_i\leq 0$ for every $0\leq t\leq d$ or $y^t_i\geq 0$ for every $0\leq t\leq d$ ($P'$ can be constructed either way);
			\item \label{asdf}$\{y^t_j:0\leq t\leq d\}=\{x^t_j:0\leq t\leq d\}$ for every $j\neq i$ where $0\leq j\leq m+1$.
	\end{enumerate} }
\end{lemma}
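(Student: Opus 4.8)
The plan is to produce $P'$ by a sequence of local modifications of $P$, each of which either swaps the order of two consecutive shifts (a relative-commutation move) or replaces a pair of shifts by a different pair with the same net effect, in such a way that the multiset of values taken by the $i$th coordinate is ``pushed'' to one side of $0$ while no other coordinate's value-set changes. Write $w=f_d\cdots f_1$ for the word corresponding to $P$. The key observation is that the only letters that touch coordinate $i$ are $s_{i-1}$ and $s_i$, and by the Shift Direction Lemma (Lemma~\ref{lem:shift_direction_lemma_Znm}) each of $s_{i-1}$ and $s_i$ appears with a single fixed direction throughout $w$. So along $P$ the $i$th coordinate is incremented only by instances of one fixed $f\in\{\overrightarrow{s}_{i-1},\overleftarrow{s}_{i-1}\}$ (those that add to entry $i$) and decremented only by instances of one fixed $g\in\{\overrightarrow{s}_i,\overleftarrow{s}_i\}$; it starts and ends at $0$, so the number of increments equals the number of decrements. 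The idea is: to get $y^t_i\ge 0$ for all $t$, we want every decrement of coordinate $i$ to be preceded (somewhere earlier in the word) by a matching increment; to get $y^t_i\le 0$, every increment to be preceded by a matching decrement.

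Concretely, suppose the sequence of $i$-affecting letters in $w$, read from right to left (i.e.\ in execution order), is a word in two symbols $\{+,-\}$ (where $+$ means ``the instance of $f$'', $-$ means ``the instance of $g$''). The partial sums of this $\pm1$ word are exactly the successive values $x^t_i$. To make all partial sums $\ge 0$ it suffices to reorder this $\pm1$ subword into the ``balanced'' order in which it never dips below $0$ — for instance, greedily taking a $+$ whenever one is available. So the main step is to show that whenever $x^{t}_i<0$ for some $t$, i.e.\ the execution order currently has a $-$ occurring while the running total is $0$, we can find a later $+$ and move it earlier, past all the intervening letters, without changing the endpoints and without disturbing any coordinate $j\ne i$. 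Moving the chosen instance of $f$ leftward through the word, we only need to commute it past letters $h$ with which it does not relatively commute, namely other instances of $s_{i-1}$ and instances of $s_{i-2}$ or $s_i$. Instances of $s_{i-1}$ all shift in the same direction as $f$, and a ``same direction'' pair $ff$ relatively commutes trivially (it's the same letter twice in a row after the swap, which is still a valid path since the total only goes further from a forbidden value — one must check the path stays repetition-free, which it does because the $\pm1$ running total strictly changes). For $s_i$: any instance of $s_i$ between our chosen $f$ and its current position decrements coordinate $i$, and relative commutation of $f$ (which increments $i$) past such a $g$ is exactly the ``$\overrightarrow{s}\ \overleftarrow{s}$'' situation discussed after the definition of relative commutation, so they relatively commute. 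The one genuinely delicate case is $s_{i-2}$ acting on coordinate $i-1$: here $f$ and the $s_{i-2}$-instance need not relatively commute, and swapping them could change the value-set of coordinate $i-1$. This is the main obstacle, and I expect to handle it the same way Lemma~\ref{lem:shift_direction_lemma_Znm}'s proof handles its analogous case: choose a \emph{minimal} offending configuration and argue that a blocking $s_{i-2}$ would force a still-closer offending pair, a contradiction; alternatively, observe that coordinate $i$ starts at $0$, so an $f$-instance firing while coordinate $i$ equals $-1$ can always be matched with the \emph{immediately preceding} $g$-instance in execution order rather than a far one, keeping the swap local enough that no $s_{i-2}$ intervenes.

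Once the $\pm1$ subword has been reordered into nonnegative (resp.\ nonpositive) running sums, define $y^t$ to be the result of executing the modified word $w'$ on $z$; by construction $y^t_i\ge 0$ (resp.\ $\le 0$) for all $t$, $w'$ still has length $d$ so $P'$ is a geodesic, and every swap preserved the value-set of every coordinate $j\ne i$, giving item~\ref{asdf}. The ``either way'' clause follows by symmetry: replace ``nonnegative running sums, take $+$ greedily'' by ``nonpositive, take $-$ greedily''. I would also record the small bookkeeping point that because $P$ is a path, consecutive vertices differ, so after each swap one must verify no vertex is repeated; this is automatic since along the $i$th coordinate the running total changes at every $i$-affecting step and the other coordinates' one-step behaviour is unchanged by a relative-commutation swap. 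The heart of the argument, and the step to write most carefully, is the ``push a later $+$ earlier'' move and its interaction with $s_{i-2}$; everything else is routine.
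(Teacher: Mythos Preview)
Your overall strategy matches the paper's: track the $\pm 1$ walk of coordinate $i$, locate an offending increment/decrement pair, and swap them to reduce the number of bad values while keeping the word length and the value-sets at $j\neq i$ intact. The paper measures progress in exactly this way (it defines $O_i(Q)$ as the number of steps with $i$th entry equal to $1$ and reduces it by one per iteration).

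However, the step you flag as ``the heart of the argument'' is where your proposal actually breaks. Your second proposed fix is simply false: taking the $g$-instance \emph{immediately preceding} the chosen $f$-instance guarantees that no $s_{i-1}$ or $s_i$ sits between them (since coordinate $i$ is stuck at $\pm 1$ throughout that stretch), but it says nothing about $s_{i-2}$ --- there can be arbitrarily many $s_{i-2}$'s in between. Your first proposed fix, the minimality appeal to Lemma~\ref{lem:shift_direction_lemma_Znm}, does not transfer: there the contradiction comes from producing two \emph{opposite directions} of the same $s_j$, but here all $s_{i-2}$ already point one way and there is no such conflict to exploit. Worse, if you try to push $f=\overrightarrow{s}_{i-1}$ past an instance of $\overrightarrow{s}_{i-2}$, the swap either is invalid (coordinate $i{-}1$ would leave $\{-1,0,1\}$) or changes the value-set of coordinate $i{-}1$; so the blanket claim that ``every swap preserved the value-set of every coordinate $j\ne i$'' is not supported by this plan.

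The missing idea is that you should move \emph{both} endpoints of the pair toward each other, not just $f$. Between the matched $g$ and $f$ every letter is some $s_j$ with $j\le i-2$ or $j\ge i+1$; these two families commute with each other, so the intervening block can be sorted into $w_2 w_1$ with $w_1$ consisting of the ``right-of-$i$'' letters and $w_2$ the ``left-of-$i$'' letters (this sorting already preserves all value-sets, since the two families act on disjoint coordinates). Now $f$ commutes with one of $w_1,w_2$ and $g$ commutes with the other; slide each past its commuting block, and then swap the now-adjacent pair $f,g$. This last swap is the only place coordinate $i$'s value-set changes, and coordinates $i{-}1$ and $i{+}1$ visit the same two values in either order. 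That is precisely what the paper does.
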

\begin{proof}
	We prove the existence of $P'$ such that $y^t_i\leq 0$ for every $0\leq t\leq d$.
	The proof for the case $y^t_i\geq 0$ follows by symmetric arguments.
	Let $Q=(w^0\sim w^1\sim\dots\sim w^l)$ be a path in $\Znm$. Define $O_i(Q)=|\{0\leq t\leq l:w^t_i=1\}|$.
	If $O_i(P)=0$, then set $P'=P$.
	Otherwise, assume that $O_i(P)>0$.
	We show that there exists a geodesic $Q=(z=w^0\sim w^1\sim\dots\sim w^d=0)$ with the following properties:
	{ % \renewcommand\labelenumi{(Property $\theenumi^*$)}
		\renewcommand\labelenumi{(\theenumi)}
		\begin{enumerate}
			\item $O_i(Q)<O_i(P)$.
			\item $\{w^t_j:0\leq t\leq d\}=\{x^t_j:0\leq t\leq d\}$ for every $j\neq i$ where $0\leq j\leq m+1$.
		\end{enumerate} }
	Note that if $O_i(Q)>0$, then the process can be repeated implying the existence of $P'$, as required.
	
	Let $f_d\cdots f_1$ be the word corresponding to $P$.
	Note that by the Shift Direction Lemma \ref{lem:shift_direction_lemma_Znm}, either both $\overrightarrow{s}_{i-1}$ and $\overrightarrow{s}_i$ or both $\overleftarrow{s}_{i-1}$ and $\overleftarrow{s}_i$ appear in $f_d\cdots f_1$, since $x^0_i=0$, $x^d_i=0$ and $O_i(P)>0$. 
	Assume without loss of generality that both $\overrightarrow{s}_{i-1}$ and $\overrightarrow{s}_i$ appear in $f_d\cdots f_1$.
	
	Let $t_1$ be the minimal index such that $x^{t_1}_i=1$ and therefore $f_{t_1}=\overrightarrow{s}_{i-1}$.
	Let $t_2$ be the minimal index such that $t_1<t_2$ and $f_{t_2}=\overrightarrow{s}_{i}$ (such $t_2$ exists since $x^d_i=0$).
	If $t_2-t_1=1$, then $f_{t_1}$ and $f_{t_2}$ relatively commute in $f_d\cdots f_1(z)$, and the path $Q$ obtained by interchanging $f_{t_1}$ and $f_{t_2}$ satisfies properties (1) and (2) as required.
	In the rest of this proof, we assume that $t_2-t_1>1$.
	
	We can assume that $f_d\cdots f_1$ is in the form 
	\begin{equation*} 
	f_d\dots f_{t_2}w_2w_1f_{t_1}\dots f_1  \tag{$*$}
	\end{equation*}
	where $w_1$ and $w_2$ are words (at least one of which is nonempty) such that every letter in $w_1$ shifts a unit between entries to the right of $i$, and every letter in $w_2$ shifts a unit between entries to the left of $i$.
	Indeed, if there exists $t$ with $t_1<t<t+1<t_2$ such that $f_{t}$ shifts a unit between entries to the left of $i$ and $f_{t+1}$ shifts a unit between entries to the right of $i$, then $f_t$ and $f_{t+1}$ commute and	the word obtained by interchanging $f_t$ and $f_{t+1}$ corresponds to a path between $z$ and $0$ which satisfies property (2), and which does not change $O_i(P)$.
	By repeatedly interchanging such pairs, we obtain a word in the form $(*)$, since $f_{t}\notin \{\overrightarrow{s}_{i-1}, \overrightarrow{s}_{i}\}$ for every $t$ with $t_1<t<t_2$.
	
	Note that $f_{t_1}$ commutes with every letter in $w_1$ and $f_{t_2}$ commutes with every letter in $w_2$. 
	Therefore, $f_d\dots w_2f_{t_1}f_{t_2}w_1\dots f_1$ corresponds to a path $Q$ between $z$ and $0$ satisfying properties (1) and (2), similarly to the case $t_2 - t_1 = 1$.
\end{proof}

\begin{lemma}\label{lem:dist_preserved_under_phi}
	Let $v,u\in\Ynm$. Then $d_{\Ynm}(v,u)=d_{\Znm}(v-u,0)$, where $d_G(v,u)$ is the distance between $v$ and $u$ in the graph $G$.
\end{lemma}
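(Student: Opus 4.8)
The plan is to prove the two inequalities $d_{\Ynm}(v,u)\geq d_{\Znm}(v-u,0)$ and $d_{\Ynm}(v,u)\leq d_{\Znm}(v-u,0)$ separately. The first inequality is the easy direction: by Observation \ref{obs:embedding_Ynm_in_Znm}, the map $\varphi_u$ is a graph isomorphism from $\Ynm$ onto the induced subgraph $\varphi_u(\Ynm)$ of $\Znm$, and $\varphi_u(v)=v-u$, $\varphi_u(u)=0$. A path in an induced subgraph is in particular a path in the ambient graph, so $d_{\Znm}(v-u,0)\leq d_{\varphi_u(\Ynm)}(v-u,0)=d_{\Ynm}(v,u)$. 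The whole content of the lemma is therefore the reverse inequality: a geodesic in $\Znm$ from $z:=v-u$ to $0$ must be convertible into a path of the same length lying entirely inside $\varphi_u(\Ynm)$, i.e.\ a path all of whose vertices, when translated back by $+u$, have all entries nonnegative and the two buckets in $\mathbb{Z}_n$.

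The strategy for the reverse inequality is to fix a geodesic $P=(z=x^0\sim\dots\sim x^d=0)$ in $\Znm$ and repair it, one middle coordinate at a time, using Lemma \ref{lem:handle_zeros_in_geodesics_znm}. Observe first that for each $1\leq i\leq m$ the target has $0_i=0$, so $z_i=v_i-u_i$ with $v_i,u_i\in P_2=\{0,1\}$; hence $z_i\in\{-1,0,1\}$. The key point is to decide, for each $i$, whether we want the $i$th coordinate of the repaired geodesic to stay $\geq 0$ or $\leq 0$: if $u_i=0$ we force $x^t_i\geq 0$ for all $t$ (so that $x^t_i+u_i\in\{0,1\}$, never $-1$ and never $2$ — note $P_3$ only allows values $-1,0,1$ in middle coordinates, so $\geq 0$ forces the value into $\{0,1\}$), and if $u_i=1$ we force $x^t_i\leq 0$ (so that $x^t_i+u_i\in\{0,1\}$ again). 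In either case $z_i$ has the right sign to begin with, so Lemma \ref{lem:handle_zeros_in_geodesics_znm} applies; but wait — that lemma assumes $z_i=0$, so I would instead argue directly at the level of words: by the Shift Direction Lemma \ref{lem:shift_direction_lemma_Znm}, all instances of $s_{i-1}$ shift one way and all instances of $s_i$ another, and combined with $z_i,0_i$ being fixed this pins down the excursions of the $i$th coordinate; one then runs the same commutation-pushing argument as in the proof of Lemma \ref{lem:handle_zeros_in_geodesics_znm} to eliminate the forbidden-sign visits. Crucially, by property \ref{asdf} of that lemma, repairing coordinate $i$ does not change the set of values taken by any coordinate $j\neq i$, so after processing $i=1,2,\dots,m$ in turn, every middle coordinate of the final geodesic lies in the desired $\{0,1\}$-range after translation, and no earlier repair is undone.

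It remains to handle the two buckets $u_0$ and $u_{m+1}$, which live in $\mathbb{Z}_n$ rather than $P_2$, so there is no sign obstruction there at all: every value of $\mathbb{Z}_n$ is allowed for a bucket of a vertex of $\Ynm$, and translation by $u_0$ (resp.\ $u_{m+1}$) is just a bijection of $\mathbb{Z}_n$. Hence once all middle coordinates are in range, the translated path $x^t+u$ automatically consists of vertices of $\Ynm$ (the congruence condition $\sum(x^t_j+u_j)\equiv\sum u_j\equiv 0\pmod n$ is preserved since $\sum x^t_j$ need not be $0$ but each edge preserves the coordinate sum, and $x^0=z=v-u$ gives sum $\equiv 0$). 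So $x^t+u$ is a path in $\Ynm$ of length $d$ from $v$ to $u$, giving $d_{\Ynm}(v,u)\leq d=d_{\Znm}(v-u,0)$, and the two inequalities together finish the proof. The main obstacle I anticipate is the bookkeeping in the generalization of Lemma \ref{lem:handle_zeros_in_geodesics_znm} to the case where $z_i=\pm1$ rather than $0$ — one must be slightly careful that the chosen sign is consistent with $z_i$ (it is, since $z_i=v_i-u_i$ and we chose the sign according to $u_i$) and that the word-rewriting argument still produces a geodesic, not merely a walk; but this is a routine adaptation of the argument already given.
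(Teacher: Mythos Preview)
Your proposal is correct and follows essentially the same approach as the paper: the easy inequality via Observation~\ref{obs:embedding_Ynm_in_Znm}, then repairing a $\Znm$-geodesic one middle coordinate at a time (sign chosen according to $u_i$) using Lemma~\ref{lem:handle_zeros_in_geodesics_znm} and its property~\ref{asdf}, and finally translating by $u$. The one simplification you missed is that there is no need to redo the commutation argument when $z_i\neq 0$: since the $i$th coordinate lies in $\{-1,0,1\}$ and changes by at most $1$ per step, it is constant equal to $z_i$ until the first index $k$ with $x^k_i=0$, so the paper simply applies Lemma~\ref{lem:handle_zeros_in_geodesics_znm} verbatim to the tail geodesic $(x^k\sim\cdots\sim x^d=0)$ and concatenates with the unchanged prefix.
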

\begin{proof}
	By Observation \ref{obs:embedding_Ynm_in_Znm}, $\Ynm$ is isomorphic to the induced subgraph $\varphi_u(\Ynm)$ of $\Znm$, implying that $d_{\Ynm}(v,u)\geq d_{\Znm}(\varphi_u(v),\varphi_u(u))=d_{\Znm}(v-u,0)$.
	
	Let $P=(v-u=x^0\sim x^1\sim\dots\sim x^d=0)$ be a geodesic between $v-u$ and $0$ in $\Znm$.
	Note that if $x^0_i\leq 0$ for some $1\leq i\leq m$, then, by Lemma \ref{lem:handle_zeros_in_geodesics_znm}, there exists a geodesic $P'=(v-u=y^0\sim y^1\sim\dots\sim y^d=0)$ such that $y^t_i\leq 0$ for every $0\leq t\leq d$ (the lemma is applied to $(x^k\sim x^{k+1}\sim\dots\sim x^d=0)$, where $k$ is the first index such that $x^k_i=0$).
	Similarly, if $x^0_i\geq 0$ for some $1\leq i\leq m$, then there exists a geodesic $P'=(v-u=y^0\sim y^1\sim\dots\sim y^d=0)$ such that $y^t_i\geq 0$ for every $0\leq t\leq d$.
	
	Therefore, by its second property, Lemma \ref{lem:handle_zeros_in_geodesics_znm} can be applied iteratively to all $1\leq i\leq m$, to construct a geodesic
	$P'=(v-u=y^0\sim y^1\sim\dots\sim y^d=0)$ from the geodesic $P$ that satisfies the following conditions for every $1\leq i\leq m$:
	\begin{enumerate}
		\item If $u_i=1$ (implying that $(v-u)_i\leq 0$), then $y^t_i\leq 0$ for every $0\leq t\leq d$.
		\item If $u_i=0$ (implying that $(v-u)_i\geq 0$), then $y^t_i\geq 0$ for every $0\leq t\leq d$.
	\end{enumerate}
	Clearly, $(v=y^0+u\sim y^1+u\sim\dots\sim y^d+u=u)$ is a path between $v$ and $u$ in $\Ynm$. Therefore $d_{\Ynm}(v,u)\leq d_{\Znm}(v-u,0)$.
\end{proof}

\begin{theorem}\label{thm:ecc_eq_diam}
	$\diam(\Ynm) = \ecc_{\Znm}(0)$.
\end{theorem}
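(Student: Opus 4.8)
The plan is to combine Lemma~\ref{lem:dist_preserved_under_phi} with one additional fact: the difference map $\Ynm\times\Ynm\to\Znm$ sending a pair $(v,u)$ to $v-u$ is surjective. Granting this, the theorem follows quickly. Since $v-u\in\Znm$ for all $v,u\in\Ynm$ (as noted before the definition of $\varphi_u$) and $d_{\Ynm}(v,u)=d_{\Znm}(v-u,0)$ by Lemma~\ref{lem:dist_preserved_under_phi}, we have
\[
\diam(\Ynm)=\max_{v,u\in\Ynm}d_{\Ynm}(v,u)=\max_{v,u\in\Ynm}d_{\Znm}(v-u,0)\leq\max_{z\in\Znm}d_{\Znm}(z,0)=\ecc_{\Znm}(0),
\]
and if every $z\in\Znm$ is realized as $v-u$ with $v,u\in\Ynm$, then the set $\{d_{\Znm}(v-u,0):v,u\in\Ynm\}$ equals $\{d_{\Znm}(z,0):z\in\Znm\}$ and the inequality becomes an equality. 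So the whole proof reduces to the surjectivity claim.

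To establish it, I would, given $z=(z_0,\dots,z_{m+1})\in\Znm$, construct $u$ and $v$ explicitly. For $1\leq i\leq m$ set $u_i=1$ if $z_i=-1$ and $u_i=0$ otherwise, and set $v_i=1$ if $z_i=1$ and $v_i=0$ otherwise; then $u_i,v_i\in P_2$ and $v_i-u_i=z_i$. For the buckets put $u_0=0$, let $u_{m+1}\in\mathbb{Z}_n$ be the class of $-\sum_{i=1}^{m}u_i$, put $v_0=z_0$ (using Convention~\ref{conv:cosets_are_integers}), and let $v_{m+1}\in\mathbb{Z}_n$ be the class of $z_{m+1}+u_{m+1}$. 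Then $v_0-u_0\equiv z_0$ and $v_{m+1}-u_{m+1}\equiv z_{m+1}\pmod n$, so $v-u=z$ in $\Znm$. By construction $\sum_{i=0}^{m+1}u_i\equiv 0\pmod n$, and, using $u_{m+1}\equiv-|\{1\le i\le m:z_i=-1\}|$ and $v_0=z_0$,
\[
\sum_{i=0}^{m+1}v_i\equiv z_0+z_{m+1}+|\{i:z_i=1\}|-|\{i:z_i=-1\}|=\sum_{i=0}^{m+1}z_i\equiv 0\pmod n,
\]
the last congruence being the defining condition for $z\in\Znm$. Hence $u,v\in\Ynm$ and $v-u=z$, which gives the claim. (The degenerate case $m=0$ is the same argument with no middle coordinates.)

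I do not expect a serious obstacle: the only point requiring care is the arithmetic in the two $\mathbb{Z}_n$-valued bucket coordinates, where one must choose $u_{m+1}$ and $v_{m+1}$ so that \emph{both} $u$ and $v$ simultaneously satisfy the sum-to-zero condition defining $\Ynm$. This is precisely where the hypothesis $\sum_{i=0}^{m+1}z_i\equiv 0\pmod n$ built into the definition of $\Znm$ is used, and it is what makes the two congruence checks above compatible.
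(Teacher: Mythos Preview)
Your proposal is correct and follows essentially the same approach as the paper: both directions use Lemma~\ref{lem:dist_preserved_under_phi}, and for the inequality $\ecc_{\Znm}(0)\le\diam(\Ynm)$ both prove surjectivity of the difference map via the very same construction (the paper writes $v_i=\max\{0,z_i\}$, $u_i=\max\{0,-z_i\}$ for $1\le i\le m$ and $v_0=z_0$, $u_0=0$, which is exactly your choice). The only difference is cosmetic: you spell out the forced values of $u_{m+1}$ and $v_{m+1}$ and verify the two sum-to-zero congruences explicitly, whereas the paper leaves this implicit.
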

\begin{proof}
	By Lemma \ref{lem:dist_preserved_under_phi}, $\diam(\Ynm) \leq \ecc_{\Znm}(0)$, since $d_{\Ynm}(v,u)=d_{\Znm}(v-u,0)$ for each pair of antipodes (vertices at maximal distance) $v$ and $u$ in $\Ynm$.
	On the other hand, for every $z\in\Znm$ there exist $v, u\in\Ynm$ such that $v-u=z$.
	Let $z\in\Znm$.
	Construct $v,u\in\Ynm$ as follows; for example,
	\begin{enumerate}
		\item $v_0 = z_0$ and $u_0 = 0$.
		\item For every $i$ with $1\leq i\leq m$, $v_i=\max\{0,z_i\}$ and $u_i=\max\{0,-z_i\}$.
	\end{enumerate}
	Then, by Lemma \ref{lem:dist_preserved_under_phi}, $d_{\Znm}(z,0) = d_{\Znm}(v-u,0) = d_{\Ynm}(v,u) \leq \diam(\Ynm)$, and therefore $\ecc_{\Znm}(0) \leq \diam(\Ynm)$. This proves equality.
\end{proof}

% ---------------------------------------------------------------
\section{The Eccentricity of $0$ in $\Znm$}
% ---------------------------------------------------------------
In this section we compute the eccentricity of $0$ in $\Znm$. 
We prove that it is equal to the value of the diameter of $\Ynm$ as it appears in Theorem \ref{thm:yoke_graph_diam}.
\begin{observation}\label{obs:corner_case_meqz}
If $m=0$, then $\Znm[n][0]$ is merely a cycle graph on $n$ vertices ($n$ vertices connected in a closed chain).
Therefore, $\ecc_{\Znm[n][0]}(0)=\lfloor\frac{n}{2}\rfloor=\lfloor\frac{n(m+1)}{2}\rfloor$, in accordance with Theorem \ref{thm:yoke_graph_diam} for the case $m\leq n$.
In the rest of this paper we consider only dYoke graphs in which $m>0$.
\end{observation}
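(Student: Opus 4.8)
The plan is to verify directly that $\Znm[n][0]$ is the cycle $C_n$, and then invoke the standard value of the eccentricity of a vertex in a cycle.

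First I would pin down the vertex set. When $m=0$ a vertex is a pair $(u_0,u_1)\in\mathbb{Z}_n\times\mathbb{Z}_n$ with $u_0+u_1\equiv 0\ (\bmod\ n)$, so $u_1$ is forced to be the representative of $-u_0$. Hence $u\mapsto u_0$ is a bijection from $V(\Znm[n][0])$ onto $\mathbb{Z}_n$, and in particular $\Znm[n][0]$ has exactly $n$ vertices. Next I would examine adjacency: the only admissible index in the definition is $i=0$, and the requirement ``$u_j=v_j$ for $j\notin\{0,1\}$'' is vacuous, so $u\sim v$ means $u_0=v_0\pm 1$ and $u_1=v_1\mp 1$ in $\mathbb{Z}_n$; the second equation is automatic once the first holds, since both sides equal $-u_0$. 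Therefore, under the identification $u\leftrightarrow u_0$, adjacency is exactly ``$a\sim b$ iff $a\equiv b\pm 1\ (\bmod\ n)$ and $a\neq b$'', which is precisely the cycle graph $C_n$ on $\mathbb{Z}_n$.

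Then I would recall the elementary fact that the distance in $C_n$ between $a$ and $b$ equals $\min\{k,\,n-k\}$ where $k$ is the representative of $a-b$ in $\{0,\dots,n-1\}$, so every vertex of $C_n$ has eccentricity $\lfloor n/2\rfloor$; in particular $\ecc_{\Znm[n][0]}(0)=\lfloor n/2\rfloor$. Since $m=0$ gives $m+1=1$, this is $\lfloor n(m+1)/2\rfloor$, which matches the value of $\diam(\Ynm)$ given by Theorem~\ref{thm:yoke_graph_diam} in the range $m\leq n$ (trivially satisfied here as $n\geq 1$), consistently with Theorem~\ref{thm:ecc_eq_diam}.

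There is no real obstacle; the only points that deserve a word of care are the degenerate small cases. For $n=1$ the group $\mathbb{Z}_1$ is trivial, so no pair $u\neq v$ is adjacent and the graph is a single isolated vertex, with eccentricity $0=\lfloor 1/2\rfloor$. For $n=2$ the two candidate unit shifts at $i=0$ produce the same (single) edge, so $\Znm[n][0]$ is $K_2=C_2$, with eccentricity $1=\lfloor 2/2\rfloor$. With these noted, the formula $\ecc_{\Znm[n][0]}(0)=\lfloor n/2\rfloor$ holds uniformly for every $n\geq 1$, which is what is claimed.
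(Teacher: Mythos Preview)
Your argument is correct and is exactly the verification the paper leaves implicit in the observation: identify $\Znm[n][0]$ with the cycle $C_n$ via $u\mapsto u_0$ and read off the eccentricity $\lfloor n/2\rfloor$. The paper gives no further proof, so your write-up (including the care taken with $n=1,2$) is simply a more detailed version of the same approach.
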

This section is composed of two subsections.
In Subsection \ref{subsec:pivot_paths}, we first introduce pivot paths and some related definitions. We then use them to compute the eccentricity of $0$ when $m\leq n$.
In Subsection \ref{subsec:n_leq_m} we deal with the case $n\leq m$.

% ---------------------------------------------------------------
\subsection{Pivot Paths and the Case $m\leq n$}\label{subsec:pivot_paths}
% ---------------------------------------------------------------

\begin{definition}[\sc Pivot]
	A \textbf{pivot of a vertex} $v\in\Znm$ is an integer $-1\leq p\leq m+1$ such that $\sum_{i=0}^{p}v_i$ is divisible by $n$ ($\sum_{i=j}^{k}v_i$ is defined as $0$ whenever $k<j$).
	We call $-1$ and $m+1$ the \textbf{outer pivots} of $v$ (they are pivots of every $v\in\Znm$), and every other pivot, if it exists, is called an \textbf{inner pivot}.
	Denote the set of pivots of $v$ by $\piv(v)$.
\end{definition}

For example, in $\Znm[3][8]$, $\piv((0,1,-1,0,1,1,-1,-1,1,2))=\{-1,0,2,3,7,9\}$.
\begin{definition}[\sc Wall]
	Let $P$ be a path from $v$ to $0$ in $\Znm$ and let $f_d\cdots f_1$ be the word corresponding to $P$.
	An \textbf{inner wall} of $P$ is an integer $0\leq p\leq m$ such that $s_p$ does not appear in $f_d\cdots f_1$.
	If $\overleftarrow{s}_0$ does not appear in $f_d\cdots f_1$, then $-1$ is the \textbf{left outer wall} of $P$. 
	Similarly, if $\overrightarrow{s}_m$ does not appear in $f_d\cdots f_1$, then $m+1$ is the \textbf{right outer wall} of $P$.
	We say that $-1\leq p\leq m+1$ is a \textbf{wall} of $P$ if it is either an inner wall or an outer wall of $P$.
\end{definition}

\begin{definition}[\sc Pivot Path]
	We say that a path $P$ from $v$ to $0$ in $\Znm$ is a \textbf{$p$-pivot path} (or simply a pivot path) of $v$, if $P$ is a shortest path with a wall $p$ (not necessarily a geodesic) from $v$ to $0$.
	Denote the length of a $p$-pivot path of $v$ by $\ps_p(v)$.
\end{definition}

Clearly, $p$-pivot paths of $v$ exist if and only if $p\in\piv(v)$.

\begin{observation}\label{obs:trivial_path}
	Every inner pivot $p$ of $v\in\Znm$ induces a path from $v$ to $0$ of length $\sum_{i=1}^{p}i|v_i| + \sum_{i=p+1}^{m}(m+1-i)|v_i|$. 
	It is a path in which the entries $\{v_1,\dots,v_p\}$ and $\{v_{p+1}, \dots, v_m\}$ of $v$ are handled one by one, left to right and right to left, respectively; each $v_i$, in turn, can be set to $0$ by $i$ unit shifts from (if $v_i=-1$) or to (if $v_i=1$) the bucket on the same side of $p$ as $v_i$.
	Therefore
	$$\ps_p(v)\leq \sum_{i=1}^{p}i|v_i| + \sum_{i=p+1}^{m}(m+1-i)|v_i|\leq \sum_{i=1}^{p}i + \sum_{i=1}^{m-p}i.$$
\end{observation}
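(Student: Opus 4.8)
The plan is to exhibit, for each inner pivot $p$ of $v$, an explicit path from $v$ to $0$ that never uses $s_p$ and has length exactly $\sum_{i=1}^{p}i|v_i| + \sum_{i=p+1}^{m}(m+1-i)|v_i|$. Granting such a path, the first inequality follows immediately, since $\ps_p(v)$ is by definition the length of a shortest path from $v$ to $0$ having $p$ as a wall; the second inequality then follows from $|v_i|\le 1$ for $1\le i\le m$ (middle entries lie in $P_3$), together with the reindexing $\sum_{i=p+1}^{m}(m+1-i)=\sum_{i=1}^{m-p}i$.

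To construct the path, I handle the two sides of $p$ separately. For the left side I process the entries $v_1,v_2,\dots,v_p$ in increasing order of index. Assume $v_1,\dots,v_{i-1}$ have already been set to $0$; then at the current vertex these entries are $0$ and entry $i$ still equals $v_i\in\{-1,0,1\}$. If $v_i=1$, I apply the shifts $\overleftarrow{s}_{i-1},\overleftarrow{s}_{i-2},\dots,\overleftarrow{s}_0$ in this order, which transports the unit sitting at entry $i$ down to bucket $0$; if $v_i=-1$, I apply $\overrightarrow{s}_0,\overrightarrow{s}_1,\dots,\overrightarrow{s}_{i-1}$, which transports a unit from bucket $0$ up to entry $i$; if $v_i=0$, I do nothing. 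In each of these $i\,|v_i|$ steps the single non-bucket entry that changes moves between $0$ and $1$ (when $v_i=1$) or between $-1$ and $0$ (when $v_i=-1$), so the shift is always a genuine move rather than a fixed point of $s_i$, and entry $i$ indeed ends at $0$. Since every one of these steps changes bucket $0$ by exactly $+v_i$, after processing $v_1,\dots,v_p$ bucket $0$ equals $v_0+v_1+\cdots+v_p$ in $\mathbb{Z}_n$, which is $0$ because $p$ is a pivot. Symmetrically, I process $v_m,v_{m-1},\dots,v_{p+1}$ in decreasing order of index, zeroing each $v_j$ with the $m+1-j$ shifts $s_j,s_{j+1},\dots,s_m$ that move a unit to or from bucket $m+1$; afterwards bucket $m+1$ equals $\sum_{i=p+1}^{m+1}v_i$ in $\mathbb{Z}_n$, which is $0$ since both $\sum_{i=0}^{m+1}v_i$ and $\sum_{i=0}^{p}v_i$ are divisible by $n$. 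Concatenating the two halves gives a path from $v$ to $0$ that uses only $s_0,\dots,s_{p-1}$ and $s_{p+1},\dots,s_m$ --- so $p$ is a wall of it --- and whose length is $\sum_{i=1}^{p}i|v_i|+\sum_{i=p+1}^{m}(m+1-i)|v_i|$.

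The only delicate point is verifying that every intermediate shift is genuine, i.e.\ that no entry is ever pushed outside $P_3$ to $\pm 2$; this is precisely why the entries left of $p$ are handled left-to-right and those right of $p$ right-to-left, so that the chain of entries the transported unit passes through has already been emptied. The rest is bookkeeping: the length count is immediate, and the two buckets returning to $0$ is just the pivot condition combined with the defining congruence $\sum_{i=0}^{m+1}u_i\equiv 0\pmod{n}$ of $\Znm$, read through Convention~\ref{conv:cosets_are_integers}. Finally, should one insist that $\ps_p(v)$ be realized by a walk with no repeated vertices, any closed sub-walk can be excised from our path without ever introducing an occurrence of $s_p$, yielding a simple path with wall $p$ of length at most the above; hence $\ps_p(v)\le\sum_{i=1}^{p}i|v_i|+\sum_{i=p+1}^{m}(m+1-i)|v_i|$.
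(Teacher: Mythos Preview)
Your proof is correct and follows exactly the construction sketched in the paper's observation: process $v_1,\dots,v_p$ left to right and $v_{p+1},\dots,v_m$ right to left, transporting each nonzero entry to or from its nearest bucket, and use the pivot condition to conclude both buckets end at $0$. The paper treats this as a self-evident observation with the path description embedded in the statement; your write-up simply fleshes out the same idea with the explicit shift sequences and the verification that no intermediate entry leaves $P_3$.
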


The proof of the Shift Direction Lemma \ref{lem:shift_direction_lemma_Znm} can be slightly modified so that the lemma applies to pivot paths.

\begin{lemma}[\sc Pivot Shift Direction Lemma]\label{lem:pivot_shift_direction_lemma}
	Let $f_d\cdots f_1$ be the word corresponding to a pivot path in $\Znm$.
	For every $i$ with $0\leq i\leq m$, all of the instances of $s_i$ in $f_d\cdots f_1$ shift in the same direction.
\end{lemma}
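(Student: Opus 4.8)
The plan is to adapt the proof of the Shift Direction Lemma \ref{lem:shift_direction_lemma_Znm} almost verbatim, paying attention to the single place where that proof used the geodesic hypothesis in a way that a pivot path does not supply. Recall that a pivot path is by definition a \emph{shortest} path having a fixed wall $p$; it need not be a geodesic, but among all paths from $v$ to $0$ that avoid $s_p$ (if $p$ is inner) or avoid $\overleftarrow{s}_0$ (resp.\ $\overrightarrow{s}_m$) for the outer walls, it is of minimal length. So I would argue by contradiction in the same style: suppose some pivot path from $v$ to $0$ has, for some index $i$, instances of both $\overrightarrow{s}_i$ and $\overleftarrow{s}_i$; among all such offending pivot paths (with the same wall $p$) choose $w=f_d\cdots f_1$ minimizing $\min\{|t_2-t_1|:f_{t_1}=\overrightarrow{s}_i,\;f_{t_2}=\overleftarrow{s}_i\}$, and let $t_1<t_2$ realize this minimum.

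The two cases run exactly as before. If $t_2=t_1+1$, delete both $f_{t_1}$ and $f_{t_2}$: the result is a path from $v$ to $0$, it still avoids the same letters (deleting letters cannot introduce the forbidden $s_p$ or forbidden outer shift), so it is still a path with wall $p$, and it is strictly shorter — contradicting minimality of the length of a $p$-pivot path. If $t_2-t_1>1$, then by minimality of $|t_2-t_1|$ the letter $f_{t_1}$ does not relatively commute with $f_{t_1+1}$, so $f_{t_1+1}\in\{\overrightarrow{s}_{i-1},\overrightarrow{s}_{i+1}\}$. In the subcase $f_{t_1+1}=\overrightarrow{s}_{i-1}$ we may assume $(f_{t_1+1}\cdots f_1(v))_i=1$ (else the two relatively commute and we shorten the gap without changing the wall), and since $f_{t_2}=\overleftarrow{s}_i$ forces the $i$th entry back down, there must be an $f_k=\overleftarrow{s}_{i-1}$ with $t_1+1<k<t_2$; this gives a pair of oppositely-directed $s_{i-1}$'s closer together than $t_2-t_1$, contradicting the choice of $w$. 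The subcase $f_{t_1+1}=\overrightarrow{s}_{i+1}$ is symmetric.

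The only real point requiring care — and the step I expect to be the main obstacle to a clean write-up — is checking that every manipulation performed in the argument (deleting a cancelling pair $\overrightarrow{s}_i\overleftarrow{s}_i$, interchanging relatively commuting adjacent letters) preserves the property of being a path \emph{with wall $p$}, i.e.\ the set of letters appearing is not enlarged by these moves. Interchanging two letters obviously leaves the multiset of letters unchanged, and deleting a cancelling pair only removes letters, so in both cases the word still avoids whatever single generator defined the wall $p$. Hence the shortened or rearranged word is again a path with wall $p$, and whenever we strictly decreased its length we contradict the definition of $\ps_p(v)$, while whenever we only decreased $|t_2-t_1|$ we contradict the minimality in the choice of $w$. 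One should also note the degenerate case $m=0$: a pivot path is then a power of a single $f\in\Fnm[n][0]$, so the statement is vacuous, exactly as in Lemma \ref{lem:shift_direction_lemma_Znm}. This completes the sketch; the write-up is essentially a copy of the earlier proof with "geodesic" replaced by "$p$-pivot path" and the one extra sentence observing that the structural moves do not create the forbidden generator.
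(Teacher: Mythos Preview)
Your proposal is correct and follows exactly the approach indicated by the paper, which does not actually write out a proof but simply remarks that the proof of Lemma~\ref{lem:shift_direction_lemma_Znm} ``can be slightly modified'' to apply to pivot paths. Your write-up supplies precisely that modification, including the one genuinely new observation needed: that deleting a cancelling pair or interchanging relatively commuting letters never introduces the forbidden generator $s_p$ (or the forbidden outer shift), so the resulting word still has wall $p$ and the minimality of $\ps_p(v)$ yields the required contradiction.
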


\begin{lemma}\label{lem:geodesic_is_a_pivot_path}
	Every geodesic between every vertex and $0$ in $\Znm$ is a pivot path.
\end{lemma}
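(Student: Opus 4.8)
The plan is to reduce the lemma to the single statement that a geodesic from a vertex $v$ to $0$ in $\Znm$ always has a wall. Once that is known, the lemma is immediate: a geodesic is a shortest path among \emph{all} paths from $v$ to $0$, hence in particular a shortest path among those having a prescribed wall $p$, so it is a $p$-pivot path. If $v=0$ the trivial path has every $p\in\{-1,\dots,m+1\}$ as a wall, so I may assume $v\neq 0$ and that the geodesic $P$, with corresponding word $f_d\cdots f_1$, has positive length.

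The argument is by contradiction: assume $P$ has no wall. Unwinding the definition of wall, this means that every $s_p$ with $0\leq p\leq m$ occurs in $f_d\cdots f_1$, that $\overleftarrow{s}_0$ occurs (else $-1$ is a wall), and that $\overrightarrow{s}_m$ occurs (else $m+1$ is a wall). By the Shift Direction Lemma \ref{lem:shift_direction_lemma_Znm}, for each $0\leq i\leq m$ all occurrences of $s_i$ in $P$ shift in one fixed direction $\delta_i\in\{\text{left},\text{right}\}$ (well defined since $s_i$ occurs), and the last two conditions translate into $\delta_0=\text{left}$ and $\delta_m=\text{right}$.

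The key step is to exhibit an interior coordinate that is ``squeezed from both sides'' along $P$. Let $j$ be the smallest index with $\delta_j=\text{right}$; then $1\leq j\leq m$ (since $\delta_0$ is left and $\delta_m$ is right) and $\delta_{j-1}=\text{left}$. Coordinate $j$ is affected only by the shifts $s_{j-1}$ and $s_j$; since $s_{j-1}$ occurs only as $\overleftarrow{s}_{j-1}$, which decreases coordinate $j$ by $1$, and $s_j$ occurs only as $\overrightarrow{s}_j$, which also decreases it by $1$, the $j$-th coordinate is non-increasing along $P$, going from $v_j$ to $0$. Comparing endpoints, the total number of occurrences of $s_{j-1}$ and $s_j$ in $P$ equals $v_j$. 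But that number is at least $2$ because both $s_{j-1}$ and $s_j$ occur, while $v_j\in\{-1,0,1\}$, so $v_j\leq 1$ — a contradiction. Hence $P$ has a wall, and the lemma follows.

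I expect the only delicate point to be the bookkeeping in the ``squeezed coordinate'' step: checking that the chosen $j$ is genuinely an interior index (so that coordinate $j$ lives in $P_3$ and is not a bucket), that $s_{j-1}$ and $s_j$ are the only shifts touching it, and that the Shift Direction Lemma legitimately pins down $\delta_{j-1}$ and $\delta_j$ — which is exactly why the no-wall hypothesis, forcing $s_{j-1}$ and $s_j$ to occur, is used. Everything else is a direct consequence of the definitions of wall and pivot path.
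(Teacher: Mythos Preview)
Your proof is correct and follows essentially the same approach as the paper: assume for contradiction that the geodesic has no wall, use the Shift Direction Lemma together with the forced directions $\delta_0=\text{left}$ and $\delta_m=\text{right}$ to locate an interior index $j$ with $\overleftarrow{s}_{j-1}$ and $\overrightarrow{s}_j$ both occurring, and derive a contradiction from the fact that coordinate $j$ can only decrease but must end at $0$. The paper compresses the final contradiction into a single line (``a contradiction to $(f_d\cdots f_1(v))_i=0$''), whereas you spell out the counting argument explicitly; otherwise the arguments are the same.
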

\begin{proof}
	Let $w=f_d\cdots f_1$ be the word corresponding to a geodesic $P$ from $v\in\Znm$ to $0$.
	Assume to the contrary that $P$ has no walls.
	By the Shift Direction Lemma \ref{lem:shift_direction_lemma_Znm}, exactly one of $\{\overleftarrow{s}_i, \overrightarrow{s}_i\}$ appears in $w$ for every $0\leq i\leq m$.
	Both $\overleftarrow{s}_0$ and $\overrightarrow{s}_m$ appear in $w$, since it has no outer walls. 
	Therefore both $\overleftarrow{s}_{i-1}$ and $\overrightarrow{s}_{i}$ appear in $w$ where $1\leq i\leq m$ is minimal such that $\overrightarrow{s}_i$ is in $w$.
	A contradiction to $(f_d\cdots f_1(v))_i=0$.
\end{proof}

\begin{fact}\label{fac:split_sum_center}
	Let $m,p,q$ be integers such that $0\leq p\leq m$ and $0\leq q\leq m$ and $|p-\frac{m}{2}| < |q-\frac{m}{2}|$. Then
	$\sum_{i=0}^{p}i + \sum_{i=0}^{m-p}i < \sum_{i=0}^{q}i + \sum_{i=0}^{m-q}i$.
\end{fact}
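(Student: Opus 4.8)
The plan is to reduce the inequality to a statement about a single real quadratic in $p$. First I would use the closed form $\sum_{i=0}^{k}i = \binom{k+1}{2} = \tfrac{1}{2}k(k+1)$ to rewrite the quantity of interest as a function
$$g(p) = \binom{p+1}{2} + \binom{m-p+1}{2} = \tfrac12\bigl(p^2 + (m-p)^2 + m\bigr).$$
So the claim becomes $g(p) < g(q)$ under the hypothesis $|p - \tfrac m2| < |q - \tfrac m2|$.

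Next I would complete the square in the bracketed expression: $p^2 + (m-p)^2 = 2\bigl(p - \tfrac m2\bigr)^2 + \tfrac{m^2}{2}$, which gives
$$g(p) = \bigl(p - \tfrac m2\bigr)^2 + \tfrac{m^2}{4} + \tfrac m2.$$
From this form it is immediate that $g(p)$ is a strictly increasing function of $\bigl(p - \tfrac m2\bigr)^2$, hence of $\bigl|p - \tfrac m2\bigr|$. Applying this with the hypothesis $\bigl|p - \tfrac m2\bigr| < \bigl|q - \tfrac m2\bigr|$ yields $g(p) < g(q)$, which is exactly the desired inequality.

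I do not expect any real obstacle here; the statement is a routine algebraic identity once the sums are written in closed form, and the completion of the square does all the work. I would remark that neither the integrality of $p, q$ nor the bounds $0 \le p, q \le m$ are actually used — the inequality holds for arbitrary reals — so the statement could be given in greater generality, but I would keep it as stated since that is the form in which it will be invoked. The only minor care needed is the arithmetic in combining the two binomial coefficients, which is straightforward.
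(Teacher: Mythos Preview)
Your proof is correct. The paper states this as a \textbf{Fact} without proof, so your argument is actually more than the paper provides; your closed-form computation and completion of the square is the natural and standard verification.
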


\begin{observation}\label{obs:corner_case_neqo}
	If $n=1$, then $\piv(v)=\{-1,\ldots,m+1\}$ for every $v\in\Znm$.
	Therefore, by Observation \ref{obs:trivial_path} and Fact \ref{fac:split_sum_center}, $\ecc_{\Znm[1][m]}(0)\leq \sum_{i=1}^{\lceil \frac{m}{2}\rceil}i + \sum_{i=1}^{\lfloor \frac{m}{2}\rfloor}i$.
	Let $v\in\Znm[1][m]$ such that $v_i=1$ for every $i$ with $1\leq i\leq m$.
	Clearly, $\ps_{-1}(v)=\ps_{m+1}(v)=\binom{m+1}{2}$ and $\ps_{p}(v)=\sum_{i=1}^{p}i + \sum_{i=1}^{m-p}i$ for every $0\leq p\leq m$. Therefore, by Lemma \ref{lem:geodesic_is_a_pivot_path} and Fact \ref{fac:split_sum_center}, $d(v,0)=\sum_{i=1}^{\lceil \frac{m}{2}\rceil}i + \sum_{i=1}^{\lfloor \frac{m}{2}\rfloor}i$. This proves that $\ecc_{\Znm[1][m]}(0)= \binom{\lceil \frac{m}{2}\rceil + 1}{2} + \binom{\lfloor \frac{m}{2}\rfloor + 1}{2}$, in accordance with Theorem \ref{thm:yoke_graph_diam} for the case $1=n\leq m$.
	In the rest of this paper we consider only dYoke graphs in which $n>1$.
\end{observation}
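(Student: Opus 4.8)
The plan is to prove the two matching bounds, writing $B=\binom{\lceil m/2\rceil+1}{2}+\binom{\lfloor m/2\rfloor+1}{2}=\sum_{i=1}^{\lceil m/2\rceil}i+\sum_{i=1}^{\lfloor m/2\rfloor}i$. The starting point is that when $n=1$ every partial sum $\sum_{i=0}^{p}v_i$ is divisible by $n=1$, so $\piv(v)=\{-1,\dots,m+1\}$ for every $v\in\Znm[1][m]$; in particular each integer in $\{0,\dots,m\}$ is an inner pivot of every vertex. This makes all the pivot‑path machinery of Subsection \ref{subsec:pivot_paths} available for every vertex.

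For the upper bound I would fix an arbitrary $v\in\Znm[1][m]$ and apply Observation \ref{obs:trivial_path} with the pivot $p=\lceil m/2\rceil\in\{0,\dots,m\}$: this yields a path from $v$ to $0$ of length at most $\sum_{i=1}^{p}i\,|v_i|+\sum_{i=p+1}^{m}(m+1-i)\,|v_i|\le\sum_{i=1}^{p}i+\sum_{i=1}^{m-p}i=B$, using that every entry satisfies $|v_i|\le 1$. Hence $d(v,0)\le B$ for all $v$, i.e. $\ecc_{\Znm[1][m]}(0)\le B$. (Fact \ref{fac:split_sum_center} confirms $p=\lceil m/2\rceil$ is the cheapest pivot choice, though for the upper bound any single choice suffices.)

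For the lower bound I would exhibit the vertex $v$ with $v_i=1$ for all $1\le i\le m$ and show $d(v,0)=B$. By Lemma \ref{lem:geodesic_is_a_pivot_path} every geodesic from $v$ to $0$ is a $p$‑pivot path for some wall $p$, so $d(v,0)=\min_{p\in\piv(v)}\ps_p(v)$ (the inequality $d(v,0)\le\ps_p(v)$ being automatic). It then remains to compute $\ps_p(v)$. Observation \ref{obs:trivial_path} and its outer‑pivot analogue already give $\ps_p(v)\le\sum_{i=1}^{p}i+\sum_{i=1}^{m-p}i$ for $0\le p\le m$ and $\ps_{-1}(v)=\ps_{m+1}(v)\le\binom{m+1}{2}$. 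For the matching lower bounds I would note that a pivot path with wall $p$ never uses $s_p$, so its moves split into those acting on the left block of positions $0,\dots,p$ and those acting on the right block $p+1,\dots,m+1$, and these act independently; thus the positive units in positions $1,\dots,p$ can leave only through the trivial ($\mathbb{Z}_1$) bucket at position $0$, and symmetrically on the right. Tracking the weighted potential $\Phi=\sum_{i=1}^{p}i\,|x_i|+\sum_{i=p+1}^{m}(m+1-i)\,|x_i|$ along such a path — where each move shifts one unit one step toward or away from its bucket, or removes one at a bucket, changing $\Phi$ by exactly $\pm 1$ — shows the path has length at least $\Phi(v)=\sum_{i=1}^{p}i+\sum_{i=1}^{m-p}i$; the outer walls are handled identically with $\Phi=\sum_{i=1}^{m}(m+1-i)|x_i|$ (or its mirror), giving $\binom{m+1}{2}$. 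Minimising over $p$ via Fact \ref{fac:split_sum_center} gives $d(v,0)=B$, and with the upper bound this proves $\ecc_{\Znm[1][m]}(0)=B$, in accordance with Theorem \ref{thm:yoke_graph_diam} for $1=n\le m$.

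The main obstacle is the lower bound $\ps_p(v)\ge\sum_{i=1}^{p}i+\sum_{i=1}^{m-p}i$: one must rule out a cleverer path that creates and later cancels $-1$ entries, or shuttles units back and forth across the free trivial buckets, thereby beating the naive ``carry each unit to its bucket'' cost — in potential terms, one must exclude moves that change $\Phi$ by more than one (which occur exactly when an adjacent $+1$ and $-1$ annihilate). The clean remedy is the preliminary reduction that a shortest pivot path with wall $p$ starting from the all‑ones vertex keeps every entry non‑negative, which follows from the Pivot Shift Direction Lemma \ref{lem:pivot_shift_direction_lemma} together with a short exchange argument; everything else is routine bookkeeping.
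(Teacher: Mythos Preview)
Your approach matches the paper's: same upper bound via Observation~\ref{obs:trivial_path} at $p=\lceil m/2\rceil$, same witness $v$ with $v_i=1$ for $1\le i\le m$, and same reduction of $d(v,0)$ to $\min_p\ps_p(v)$ via Lemma~\ref{lem:geodesic_is_a_pivot_path} and Fact~\ref{fac:split_sum_center}. The only place you work harder than necessary is the lower bound $\ps_p(v)\ge\sum_{i=1}^{p}i+\sum_{i=1}^{m-p}i$ for an inner wall $p$. If you drop the absolute values and track the \emph{linear} functional $\Psi=\sum_{i=1}^{p} i\,x_i+\sum_{i=p+1}^{m}(m+1-i)\,x_i$, then every $s_j$ with $j\ne p$ changes $\Psi$ by exactly $\pm 1$ regardless of the signs of the entries, so any path that avoids $s_p$ has length at least $|\Psi(v)-\Psi(0)|=\sum_{i=1}^{p}i+\sum_{i=1}^{m-p}i$; this is what underlies the paper's ``Clearly'' (and is the mechanism of the later Lemma~\ref{lem:number_of_shift_in_a_p_interval}). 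Your absolute-value potential $\Phi$ loses this exactness --- both annihilation \emph{and} creation of an adjacent $(+1,-1)$ pair change $\Phi$ by more than~$1$, not only annihilation as you wrote --- and that is the sole source of the obstacle you flag; the exchange-argument remedy you propose is doable but entirely avoidable.
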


\begin{definition}[\sc $P$-interval]
	Let $P$ be a pivot path of $v$ and let $\{p_1,\ldots,p_t\}$ with $-1<p_1<\ldots<p_t<m+1$ be the set of inner walls of $P$.
	Denote $p_0 = -1$, $p_{t+1} = m+1$ and $Piv(P) = \{p_0, p_1, \ldots, p_t, p_{t+1}\}$.
	Note that $\piv(P)\subseteq \piv(v)$.
	We call every $p\in\piv(P)$ a \textbf{pivot of $P$}.
	$\piv(P)$ induces $t+1$ intervals $[p_k + 1,p_{k+1}]$ for $0\leq k\leq t$.
	Note that these intervals are a partition of $[0,m+1]$.
	We call every such interval a \textbf{$P$-interval}.
	For convenience, we say that an entry $v_i$ is in the interval $I$ if $i\in I$.
\end{definition}

\begin{lemma}\label{lem:same_direction_in_interval}
	Let $f_d\cdots f_1$ be the word corresponding to a pivot path $P$ of $v$ and let $I$ be a $P$-interval.
	Then for every $[i,i+1]\subseteq I$ and $[j,j+1]\subseteq I$, the instances of $s_i$ and $s_j$ in $f_d\cdots f_1$ shift in the same direction.
\end{lemma}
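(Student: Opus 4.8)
The plan is to argue by contradiction, mimicking the structure of the proof of Lemma~\ref{lem:shift_direction_lemma_Znm} but now working inside a single $P$-interval $I = [p_k+1, p_{k+1}]$, where $p_k, p_{k+1}$ are consecutive pivots of $P$. Suppose, for contradiction, that there exist $[i,i+1]\subseteq I$ and $[j,j+1]\subseteq I$ with the instances of $s_i$ shifting right and the instances of $s_j$ shifting left (or vice versa); recall that the Pivot Shift Direction Lemma~\ref{lem:pivot_shift_direction_lemma} already guarantees that each individual $s_\ell$ shifts consistently in one direction, so ``the direction of $s_\ell$'' is well-defined. Without loss of generality $i < j$. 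Since $s_i$ and $s_j$ are both among the $s_\ell$ that actually appear in the word (neither index is a wall of $P$, because every index in the open interval $(p_k, p_{k+1})$ is not a wall — that is exactly what it means for the interval to be bounded by consecutive pivots of $P$, i.e.\ consecutive inner walls together with outer walls), we may choose $i < j$ in $I$ minimal in the sense that $s_i$ shifts right, $s_j$ shifts left, and $j - i$ is as small as possible.

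First I would reduce to the case $j = i+1$. If $j > i+1$, then $s_{i+1}$ appears in the word (again $i+1$ is not a wall since it lies in the interior of $I$), and by minimality of $j-i$ the shift $s_{i+1}$ must shift to the right (it cannot shift left, else $(i, i+1)$ would be a closer offending pair). Iterating, every $s_\ell$ for $i \le \ell < j$ shifts right, while $s_j$ shifts left; so $(j-1, j)$ is an offending pair with gap $1$, contradicting minimality unless $j = i+1$ already. Hence we may assume $j = i+1$: the unit shift $s_i$ always moves a unit to the right, and $s_{i+1}$ always moves a unit to the left, both across the boundary $i+1$.

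Now comes the main point, which is where I expect the real obstacle to be: deriving a contradiction from the existence of both a rightward $s_i$ and a leftward $s_{i+1}$. The key observation is a conservation/flux argument across the boundary between entry $i$ and entry $i+1$. Consider the partial sum $\Sigma_\ell := \sum_{r=0}^{\ell} x^t_r$ as a function of $t$ along the path, for $\ell = i+1$ and also for the two pivots $\ell = p_k$ and $\ell = p_{k+1}$ bounding $I$. By definition of pivots of $P$, both $\Sigma_{p_k}$ and $\Sigma_{p_{k+1}}$ are $\equiv 0 \pmod n$ at the endpoints $v$ and $0$; but more is true along the path. I would show that across a $P$-interval whose boundary indices $p_k+1, p_{k+1}$ are walls (so no unit ever crosses into or out of $I$ from outside), the total signed mass $\sum_{r\in I} x^t_r$ is conserved, and in fact each crossing index $\ell$ inside $I$ has a well-defined net flux equal to the total mass that must be moved, which by the Pivot Shift Direction Lemma is monotone — all shifts at $s_\ell$ go the same way. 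The contradiction: a rightward $s_i$ means mass is being pushed from the left part of $I$ rightward across $i+1$; a leftward $s_{i+1}$ means mass is being pushed leftward across $i+1$ into the same region; combining these, one can delete a matched pair (a rightward $s_i$ event and a leftward $s_{i+1}$ event, suitably reordered using relative commutations exactly as in Lemma~\ref{lem:shift_direction_lemma_Znm}) to produce a shorter path with the same wall $p$ — contradicting that $P$ is a \emph{shortest} path with wall $p$, i.e.\ a pivot path. The delicate part is the bookkeeping that lets one reorder the word so that a rightward-$s_i$ letter sits immediately next to a leftward-$s_{i+1}$ letter: this is precisely the relative-commutation shuffling argument from the proof of Lemma~\ref{lem:shift_direction_lemma_Znm}, and it goes through because every index strictly between $p_k$ and $p_{k+1}$ is non-wall, so there are no ``missing generators'' obstructing the shuffle. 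Once the two letters are adjacent, they cancel as in that earlier proof, and we are done.
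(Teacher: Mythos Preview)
Your reduction to adjacent indices is fine and matches the paper's approach, but the final step contains a genuine error. You propose to cancel a matched pair $\overrightarrow{s}_i$ and $\overleftarrow{s}_{i+1}$ ``exactly as in Lemma~\ref{lem:shift_direction_lemma_Znm}.'' But in that lemma the cancelled pair is $\overrightarrow{s}_i$ and $\overleftarrow{s}_i$ with the \emph{same} index, which are honest inverses. Here the indices differ: $\overrightarrow{s}_i$ sends a unit from entry $i$ into entry $i+1$, while $\overleftarrow{s}_{i+1}$ sends a unit from entry $i+2$ into entry $i+1$. Their composite changes three entries ($i\!\downarrow$, $i{+}1\!\uparrow\uparrow$, $i{+}2\!\downarrow$); deleting both letters from the word does \emph{not} preserve the endpoint, so no shorter pivot path is produced. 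The relative-commutation shuffling is a red herring once the two letters are not inverses.

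The actual contradiction is much simpler, and you were one observation away from it. After your reduction you have $1\le i{+}1\le m$ (so entry $i{+}1$ is a $P_3$ entry), and by the Pivot Shift Direction Lemma only $\overrightarrow{s}_i$ and $\overleftarrow{s}_{i+1}$ occur among $\{s_i,s_{i+1}\}$. These are the only letters that touch entry $i{+}1$, and \emph{each of them increases it by $1$}. Since neither $i$ nor $i{+}1$ is a wall, both letters occur at least once, so the net change in entry $i{+}1$ along $P$ is at least $+2$. But that net change equals $0-v_{i+1}\le 1$, a contradiction. This is exactly the paper's argument (stated there in the mirror orientation: $\overleftarrow{s}_{k-1}$ and $\overrightarrow{s}_k$ both \emph{decrease} entry $k$, contradicting $(f_d\cdots f_1(v))_k=0$). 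No flux bookkeeping or word-reordering is needed.
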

\begin{proof}
	By the Pivot Shift Direction Lemma \ref{lem:pivot_shift_direction_lemma}, exactly one of $\{\overleftarrow{s}_i, \overrightarrow{s}_i\}$ appears in $f_d\cdots f_1$, for every $[i,i+1]\subseteq I$.
	Assume to the contrary, without loss of generality, that both $\overleftarrow{s}_i$ and $\overrightarrow{s}_j$, appear in $f_d\cdots f_1$ for $i<j$ in $I$.
	Let $k\in I$ such that $i<k$ and $k$ is minimal such that $s_k$ appears shifting right in $f_d\cdots f_1$.
	Therefore both $\overleftarrow{s}_{k-1}$ and $\overrightarrow{s}_{k}$ appear in $f_d\cdots f_1$.
	Note that $1\leq k\leq m$.
	A contradiction to $(f_d\cdots f_1(v))_k=0$.
\end{proof}

\begin{definition}[\sc $d_I(P)$]
	By Lemma \ref{lem:same_direction_in_interval}, all of the unit shifts between entries in the same $P$-interval $I$ are in the same direction throughout $P$. 
	If the direction of these shifts is left (right), we say that \textbf{$P$ shifts left (right) in $I$}.
	Denote the number of unit shifts between entries in some interval $I\subseteq [0,m+1]$ within a pivot path $P$ by $d_I(P)$.
\end{definition}

\begin{lemma}\label{lem:number_of_shift_in_a_p_interval}
	Let $I=[p_1+1,p_2]$ be a $P$-interval of some pivot path $P$ of $v\in\Znm$.
	If $I$ contains one of the buckets, assume also that no step in $P$ shifts a unit from an empty bucket (i.e. $\overleftarrow{s}_m((\dots,0))=((\dots,n-1))$ and $\overrightarrow{s}_0((0,\dots))=((n-1,\dots))$ do not appear in $P$). Then
	
	\begin{enumerate}
		\item If $P$ shifts left in $I$, then $d_I(P) = \sum_{i\in I}iv_i$.
		\item If $P$ shifts right in $I$, then $d_I(P) = \sum_{i\in I}(m+1-i)v_i$.
	\end{enumerate}
\end{lemma}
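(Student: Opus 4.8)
The plan is to track what happens to the ``mass'' $\sum_{i\in I} v_i$ when we evolve $v$ along $P$ restricted to the interval $I=[p_1+1,p_2]$. The key structural facts are already in hand: by Lemma \ref{lem:same_direction_in_interval} all unit shifts strictly inside $I$ go in one direction, say left (the right case being symmetric), and by the definition of a $P$-interval the endpoints $p_1$ and $p_2$ are walls of $P$, so no unit ever crosses the boundary between $I$ and its neighbours. Consequently the sub-multiset of letters of $f_d\cdots f_1$ of the form $s_k$ with $[k,k+1]\subseteq I$ acts only on the coordinates indexed by $I$, and these letters, applied in order, form a path inside a fixed ``slice'' of the graph. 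I would first make this reduction precise: the restriction of $v$ to $I$ is carried to the restriction of $0$ to $I$ (i.e. the all-zero vector on $I$) by exactly $d_I(P)$ left-shifts, each moving one unit one step left among the coordinates $v_{p_1+1},\dots,v_{p_2}$.

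Next I would compute $d_I(P)$ by a weighting argument. Assign to a unit sitting in coordinate $j$ the weight $j$ (its distance to the right-hand boundary of $I$ is not what we want here — for left-shifts we measure position directly). A single left-shift $s_k$ moves one unit from coordinate $k+1$ to coordinate $k$, decreasing the total weight $\sum_j j\cdot(\text{number of units in coordinate }j)$ by exactly $1$; shifts outside $I$ and shifts that are fixed points (e.g.\ an attempted shift of a $-1$, or the forbidden empty-bucket shifts, which are excluded by hypothesis) do not change this weighted sum restricted to $I$. The initial weighted sum over $I$ is $\sum_{i\in I} i v_i$ (reading $v_i\in\{-1,0,1\}$, or $v_i\in\mathbb{Z}_n$ for a bucket coordinate, as a signed count of units), the final one is $0$, and the total number of weight-decreasing steps is $d_I(P)$. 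Hence $d_I(P)=\sum_{i\in I} i v_i$, which is statement (1). For statement (2), when $P$ shifts right in $I$ I would instead weight a unit in coordinate $j$ by $m+1-j$, so that each right-shift $s_k$ (moving a unit from coordinate $k$ to $k+1$) decreases this quantity by $1$, and the same bookkeeping gives $d_I(P)=\sum_{i\in I}(m+1-i)v_i$.

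The one delicate point deserving care is the bucket coordinates $v_0$ and $v_{m+1}$, which live in $\mathbb{Z}_n$ rather than in $\{-1,0,1\}$, so ``number of units'' is only well defined modulo $n$. This is exactly why the lemma hypothesizes that no step shifts a unit from an empty bucket: under that hypothesis, if, say, $I$ contains the left bucket and $P$ shifts right in $I$, then units are only ever added to or removed from coordinate $0$ in a way consistent with the integer representative staying in $[0,n-1]$ — I would argue that the count of units deposited into the left bucket over the course of $P$ equals the representative $v_0$ exactly (not just mod $n$), because the bucket starts at $v_0$, ends at $0$, never underflows past $0$, and each insertion/removal changes it by $\pm 1$; combined with the sign conventions this pins down the contribution of the bucket term to the weighted sum. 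I expect this bucket analysis to be the main obstacle: the interior coordinates are a routine telescoping/potential argument, but getting the bucket term to contribute exactly $i v_i$ (rather than $i v_i$ shifted by a multiple of $n$) requires using the no-empty-bucket-shift hypothesis carefully, and handling both the case where $I$ contains a bucket and the direction of shift is ``into'' versus ``out of'' that bucket.
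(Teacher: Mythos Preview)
Your approach is essentially identical to the paper's: both use the potential function $\sum_{i\in I} i\,(\text{current value at }i)$ (resp.\ $\sum_{i\in I}(m+1-i)(\text{current value at }i)$), observe that each left (resp.\ right) shift inside $I$ decreases it by exactly $1$, and conclude by comparing initial and terminal values. The paper's proof is a single paragraph that assumes $P$ shifts left, notes that each left shift drops $\sum_{i\in I} i v_i$ by $1$ ``since a unit is never shifted left from an empty right bucket,'' and concludes immediately.

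One small clarification that would tighten your bucket discussion: in the left-shift case the left bucket (if present) carries weight $0\cdot v_0=0$, so wraparound there is harmless; the only dangerous bucket is the one you are shifting \emph{out of}, and that is exactly what the no-empty-bucket hypothesis forbids. With that observation the ``into vs.\ out of'' dichotomy you flag collapses, and the argument becomes as short as the paper's.
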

\begin{proof}
	Note that by Lemma \ref{lem:same_direction_in_interval}, $P$ shifts either right or left in $I$.
	Assume that $P$ shifts left in $I$ (the proof of the second case follows by symmetric arguments).
	Note that every left unit shift in $P$ between entries in $I$ reduces the value of the sum $\sum_{i\in I}iv_i$ exactly by $1$, since a unit is never shifted left from an empty right bucket.
	Therefore $\sum_{i\in I}iv_i\geq 0$ and $d_I(P) = \sum_{i\in I}iv_i$.
\end{proof}

\begin{lemma}\label{lem:sufficient_for_number_of_shifts}
	Let $I=[p_1+1,p_2]$ be a $P$-interval of some pivot path $P$ of $v\in\Znm$.
	\begin{enumerate}
		\item Assume that $p_2=m+1$ and that $P$ shifts left in $I$. If $\sum_{i=j}^{m+1}v_i\geq 0$ for every $j\in I$, then there is no left unit shift in $P$ from an empty right bucket.
		\item Assume that $p_1=-1$ and that $P$ shifts right in $I$. If $\sum_{i=0}^{j}v_i\geq 0$ for every $j\in I$, then there is no right unit shift in $P$ from an empty left bucket.
	\end{enumerate}
\end{lemma}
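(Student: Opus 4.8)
The plan is to reduce both statements to a single counting identity along the boundaries inside $I$, obtained by conservation of units, and then read the conclusion off the partial‑sum hypothesis. By the left–right symmetry of $\Znm$ (reversing the order of coordinates interchanges $\overleftarrow{s}_i$ with $\overrightarrow{s}_{m-i}$, swaps the two buckets, and swaps the two hypotheses), part (2) follows from part (1), so I treat only part (1). If $p_1=m$ then $I=\{m+1\}$ and $s_m$ is a wall of $P$, so $\overleftarrow{s}_m$ never occurs in $P$ and there is nothing to prove; hence assume $p_1\le m-1$.

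Write $P=(v=x^0\sim\cdots\sim x^d=0)$, and for $p_1\le b\le m$ let $\phi_b$ denote the number of steps of $P$ shifting a unit across the boundary between coordinates $b$ and $b+1$. Since $P$ shifts left in $I$, each such step is an $\overleftarrow{s}_b$; since $p_1$ is a wall, $\phi_{p_1}=0$; and $\phi_b\ge 0$. The right bucket $x_{m+1}$ is altered only by the $\phi_m$ occurrences of $\overleftarrow{s}_m$, each decreasing its value by $1$ modulo $n$; as that value runs from $v_{m+1}$ to $0$ with $0\le v_{m+1}<n$, we have $\phi_m\in\{v_{m+1},v_{m+1}+n,v_{m+1}+2n,\ldots\}$, and $\phi_m=v_{m+1}$ holds if and only if no decrement wraps $0$ to $n-1$, i.e.\ if and only if $P$ has no left unit shift from an empty right bucket. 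So it suffices to prove $\phi_m=v_{m+1}$. Finally, for every non‑bucket coordinate $x_i$ with $p_1<i\le m$ (whose value never wraps), balancing the units entering $x_i$ from the right against those leaving to the left gives $\phi_i-\phi_{i-1}=x_i^d-x_i^0=-v_i$.

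Suppose first $p_1\ge 0$. Summing $\phi_i-\phi_{i-1}=-v_i$ over $p_1<i\le m$ and using $\phi_{p_1}=0$ gives $\phi_m=-\sum_{i=p_1+1}^m v_i$. Since $p_1\in\piv(P)\subseteq\piv(v)$, the sum $\sum_{i=p_1+1}^{m+1}v_i$ is divisible by $n$, and by the hypothesis with $j=p_1+1\in I$ it is nonnegative; writing it as $cn$ with $c\ge 0$ gives $\phi_m=v_{m+1}-cn$, which forces $c=0$ and $\phi_m=v_{m+1}$, as required.

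The case $p_1=-1$ is the main obstacle, because $I$ has no wall bounding it on the left, so the flow cannot be pinned down by conservation alone. Here $I=[0,m+1]$ is the unique $P$‑interval, so $P$ has no inner walls; hence every $s_j$ occurs in $P$, and as $P$ shifts left in $I$ its only wall is the right outer wall $m+1$, so $P$ is an $(m+1)$‑pivot path, i.e.\ a shortest path from $v$ to $0$ with wall $m+1$. The balance identities now give $\phi_b=\phi_m+\sum_{i=b+1}^m v_i$ for $0\le b\le m$, so $\len(P)=\sum_{b=0}^m\phi_b$ is an affine strictly increasing function of $\phi_m$. I then construct a path $Q$ from $v$ to $0$ that shifts left throughout and has $\phi_m(Q)=v_{m+1}$: process the coordinates of $v$ one unit at a time, each time bubbling the unit that is needed (or in excess) leftward towards the left bucket; the hypothesis $\sum_{i=j}^{m+1}v_i\ge 0$ for all $j\in I$ is exactly what guarantees that whenever a unit must be supplied to a coordinate one is available to its right, and that no coordinate leaves its range, so $Q$ is a legitimate path in which $\overrightarrow{s}_m$ never appears — that is, $Q$ has wall $m+1$. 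Since $P$ is shortest among paths with wall $m+1$, $\len(P)\le\len(Q)$; but a left shift from an empty right bucket in $P$ would force $\phi_m(P)\ge v_{m+1}+n>\phi_m(Q)$ and hence $\len(P)>\len(Q)$, a contradiction. Therefore $\phi_m(P)=v_{m+1}$ and $P$ contains no such step. The delicate points are this last case — where minimality of the pivot path must be combined with the construction of $Q$, and where verifying that $Q$ is well defined uses the full partial‑sum hypothesis rather than only its value at the left end of $I$ — and keeping careful track throughout of the modular wrapping of the two bucket coordinates.
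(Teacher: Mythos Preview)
Your proof is correct and, in the case $p_1\ge 0$, takes a genuinely different route from the paper. The paper argues uniformly for all $p_1$: it observes that a non-simple left shift increases $\sum_{i\in I} i v_i$ while each simple one decreases it by $1$, then proves (its Claim) that the entries of $v$ in $I$ can be zeroed using only simple left shifts; minimality of $P$ as a pivot path then rules out non-simple shifts. You instead split off $p_1\ge 0$ and handle it by pure flow bookkeeping: the wall at $p_1$ pins $\phi_{p_1}=0$, telescoping gives $\phi_m=v_{m+1}-cn$ with $c\ge 0$, and $\phi_m\ge 0$ forces $c=0$. This is cleaner, uses only the single hypothesis value at $j=p_1+1$, avoids constructing any comparison path, and in fact proves the stronger statement that \emph{any} path from $v$ to $0$ with wall $p_1$ shifting left in $I$ (not just a pivot path) avoids the wrap. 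For $p_1=-1$ your argument and the paper's coincide: both construct a simple-shift path $Q$ with wall $m+1$ (your ``bubbling'' procedure is essentially the paper's Claim, and the full partial-sum hypothesis is exactly what makes it go through) and then invoke minimality of $P$ among paths with that wall. Your proof also makes explicit why $p_1=-1$ is the hard case—without a left wall there is no conservation identity pinning $\phi_m$—which the paper leaves implicit.
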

\begin{proof}
	We prove the first case and similar arguments apply to the second case.
	Note that a left unit shift from an empty right bucket increases $\sum_{i\in I}iv_i$. 
	On the other hand, every other left unit shift between entries in $I$ (we call such unit shifts \textit{simple left unit shifts} in the rest of this proof) reduces the value of the sum $\sum_{i\in I}iv_i$ exactly by $1$.
	Therefore, it is sufficient to prove the following.
	\begin{claim}
		It is possible to set to $0$ every $v_i$ in $I$ using only simple left unit shifts between entries in $I$.
	\end{claim}
	Assume that $I$ contains no negative entries.
	If $p_1=-1$, then the claim is trivial.
	If $p_1$ is an inner pivot, then $v_i=0$ for every $i\in I$, since, in this case, a left unit shift between entries in $I$ cannot decrease the sum $\sum_{i\in I}v_i$ (implying, in fact, that $v_{m+1}=0$ and $|I|=1$).

	Otherwise, let $j$ be maximal in $I$ such that $v_j=-1$.
	The assumption that $\sum_{i=j}^{m+1}v_i\geq 0$ implies that $\sum_{i=j+1}^{m+1}v_i > 0$.
	Therefore, it is possible to set $v_j$ to $0$ using only simple left unit shifts between entries in $[j,m+1]$ without creating new negative entries in $I$.
	Note that $\sum_{i=j}^{m+1}v_i$ does not change in this process, since all of the unit shifts are simple.
	Therefore, this can be repeated until $I$ contains no negative entries.
	This proves the claim.
\end{proof}

% ---------------------------------------------------------------
%\subsection{The Case $m\leq n$}
% ---------------------------------------------------------------
\begin{lemma}\label{lem:no_pivots}
	If $\piv(v)=\{-1,m+1\}$ for some $v\in\Znm$, then $\ps_{m+1}(v)=\sum_{i=0}^{m+1}iv_i$, $\ps_{-1}(v)=\sum_{i=0}^{m+1}(m+1-i)v_i$ and $d(v,0)\leq \lfloor\frac{n(m+1)}{2}\rfloor$.
\end{lemma}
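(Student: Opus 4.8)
The plan is to exploit the hypothesis $\piv(v) = \{-1, m+1\}$: there are no inner pivots, so every pivot path of $v$ has only the two outer pivots as walls, and hence consists of a single $P$-interval $I = [0, m+1]$ containing both buckets. First I would establish the two formulas for $\ps_{m+1}(v)$ and $\ps_{-1}(v)$. For $\ps_{m+1}(v)$: an $(m+1)$-pivot path has $m+1$ as a wall, meaning $\overrightarrow{s}_m$ never appears; equivalently, the right bucket $v_{m+1}$ is never touched along the path, so effectively we must bring $(v_0,\dots,v_m)$ to zero using only the left bucket. By Lemma \ref{lem:number_of_shift_in_a_p_interval} applied with the single interval $I=[0,m]$ (after invoking Lemma \ref{lem:sufficient_for_number_of_shifts} to guarantee no shift occurs from an empty bucket), if the path shifts left in $I$ then its length is $\sum_{i\in I} i v_i$, while shifting right gives $\sum_{i\in I}(m+1-i)v_i$; I would argue that for a $p$-pivot path with $p = m+1$ the economical direction is left (a right shift would push mass toward the forbidden wall), so the shortest such path has length $\sum_{i=0}^{m} i v_i = \sum_{i=0}^{m+1} i v_i$ (the $i=m+1$ term contributes nothing to the sum but also, since $v_{m+1}$ is untouched and $\sum v_i \equiv 0 \pmod n$ with no inner pivots forcing $v_{m+1}$ into a canonical value, one must check the bookkeeping is consistent — I'll return to this). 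The formula $\ps_{-1}(v) = \sum_{i=0}^{m+1}(m+1-i)v_i$ is the mirror image, obtained by the symmetry $i \mapsto m+1-i$ swapping the two buckets, using the $P$-interval $[1,m+1]$ and right shifts.

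The subtle point I need to verify carefully is well-definedness of the sums: Convention \ref{conv:cosets_are_integers} identifies each coset in $\mathbb{Z}_n$ with its least nonnegative representative, so $v_0, v_{m+1} \in \{0,\dots,n-1\}$ and $v_1,\dots,v_m \in \{-1,0,1\}$. For the $(m+1)$-pivot path, the quantity $\sum_{i=0}^{m+1} i v_i$ is a genuine nonnegative integer (Lemma \ref{lem:number_of_shift_in_a_p_interval} gives $\sum_{i\in I} i v_i \ge 0$), and the path described in Observation \ref{obs:trivial_path}-style reasoning realizes exactly this many shifts — but here, because there is no inner pivot, the naive "handle entries one by one" path must be checked to use only simple shifts, which is precisely what the Claim inside Lemma \ref{lem:sufficient_for_number_of_shifts} provides once we know $\sum_{i=j}^{m+1} v_i \ge 0$ for all $j$. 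This last inequality is NOT automatic from $\piv(v)=\{-1,m+1\}$ alone (that only says the partial sums are never $\equiv 0 \pmod n$, not that they're positive), so I expect this to be the main obstacle: I will need to show that for the shortest path with wall $m+1$ one may assume, after possibly adding $n$ to the bucket (a choice of representative in $\mathbb{Z}$, which does not change the vertex but does change the sum), that the partial sums from the right are nonnegative — or alternatively argue directly, via the Pivot Shift Direction Lemma \ref{lem:pivot_shift_direction_lemma} and Lemma \ref{lem:same_direction_in_interval}, that any pivot path with wall $m+1$ must have length at least $\sum_{i=0}^{m+1} i v_i$ and that this bound is attained.

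For the diameter bound $d(v,0) \le \lfloor \frac{n(m+1)}{2}\rfloor$, the idea is that $d(v,0) \le \min\{\ps_{-1}(v), \ps_{m+1}(v)\}$ since geodesics are pivot paths (Lemma \ref{lem:geodesic_is_a_pivot_path}) and in the no-inner-pivot case the only available walls are $-1$ and $m+1$. Hence $2\, d(v,0) \le \ps_{-1}(v) + \ps_{m+1}(v) = \sum_{i=0}^{m+1} i v_i + \sum_{i=0}^{m+1}(m+1-i)v_i = (m+1)\sum_{i=0}^{m+1} v_i$. Since $\sum_{i=0}^{m+1} v_i \equiv 0 \pmod n$ and $v_1,\dots,v_m \in \{-1,0,1\}$ while $v_0,v_{m+1}\in\{0,\dots,n-1\}$, one checks $0 \le \sum v_i \le 2(n-1)+m$; but I want the stronger conclusion that the relevant sum is at most $n$. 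Here I would use that if $\sum_{i=0}^{m+1} v_i \ge 2n$ one could subtract $n$ from a bucket to get a representation with smaller sum realizing the same vertex, so without loss of generality $\sum_{i=0}^{m+1} v_i \in \{0, n\}$ for the purpose of the minimum of the two pivot-path lengths (a nonnegative multiple of $n$ that is as small as possible; it cannot be $0$ unless $v=0$, in the relevant case it is $n$). Then $2\,d(v,0) \le (m+1)n$, giving $d(v,0) \le \lfloor \frac{n(m+1)}{2}\rfloor$ after taking the floor — which is legitimate because $d(v,0)$ is an integer. Closing the gap between "$\sum v_i$ is a positive multiple of $n$" and "$\sum v_i = n$" is the second place where care is needed, and I would handle it by choosing, among all integer representatives $(v_0,\dots,v_{m+1})$ of the vertex with $v_1,\dots,v_m\in\{-1,0,1\}$, ones that minimize $\ps_{-1}$ and $\ps_{m+1}$ respectively, noting that the pivot-path length depends only on the vertex, not the representative.
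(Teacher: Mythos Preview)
Your overall architecture matches the paper's: compute $\ps_{m+1}(v)$ and $\ps_{-1}(v)$ via Lemmas~\ref{lem:number_of_shift_in_a_p_interval} and~\ref{lem:sufficient_for_number_of_shifts}, then bound $d(v,0)$ by half their sum. The difference is that you flag as ``obstacles'' two facts that are actually immediate from the hypothesis, and your proposed workarounds (juggling representatives in $\mathbb{Z}_n$) are both unnecessary and at odds with Convention~\ref{conv:cosets_are_integers}, which fixes the integer representatives once and for all.

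The missing observation is this. Since $0\notin\piv(v)$ we have $v_0\in\{1,\dots,n-1\}$; since $m\notin\piv(v)$ but $m+1\in\piv(v)$, also $v_{m+1}\in\{1,\dots,n-1\}$. Now the partial sums $S_j=\sum_{i=0}^{j}v_i$ satisfy $S_0=v_0\in(0,n)$ and $|S_j-S_{j-1}|=|v_j|\le 1$ for $1\le j\le m$. Because no $j\in[0,m]$ is a pivot, none of these $S_j$ is a multiple of $n$; a step-size-at-most-one walk starting in the open interval $(0,n)$ and avoiding the endpoints $0$ and $n$ (and hence all multiples of $n$) must remain in $(0,n)$. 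Therefore $0<S_j<n$ for every $0\le j\le m$, which gives both inequalities you needed: $\sum_{i=0}^{j}v_i>0$ and $\sum_{i=j}^{m+1}v_i=n-S_{j-1}>0$ for all $j$. It also pins down the total exactly: $S_{m+1}=S_m+v_{m+1}\in\{2,\dots,2n-2\}$ and $S_{m+1}\equiv 0\pmod n$, so $\sum_{i=0}^{m+1}v_i=n$ on the nose. With this in hand, Lemmas~\ref{lem:sufficient_for_number_of_shifts} and~\ref{lem:number_of_shift_in_a_p_interval} apply directly and your final averaging step gives $d(v,0)\le\lfloor n(m+1)/2\rfloor$ with no further case analysis. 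This is exactly what the paper does in three lines; there is no need to change bucket representatives or to minimize over them.
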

\begin{proof}
	Note that by Convention \ref{conv:cosets_are_integers}, $0<v_0,v_{m+1}<n$ and $\sum_{i=0}^{m+1}v_i = n$ since $\piv(v)=\{-1,m+1\}$.
	The assumption $\piv(v)=\{-1,m+1\}$ also implies that both $\sum_{i=0}^{j}v_i> 0$ and $\sum_{i=j}^{m+1}v_i> 0$ for every $0\leq j\leq m+1$.
	Therefore $\ps_{m+1}(v)=\sum_{i=0}^{m+1}iv_i$ and $\ps_{-1}(v)=\sum_{i=0}^{m+1}(m+1-i)v_i$ by Lemmas \ref{lem:number_of_shift_in_a_p_interval} and \ref{lem:sufficient_for_number_of_shifts}.
	
	Clearly $\sum_{i=0}^{m+1}iv_i + \sum_{i=0}^{m+1}(m+1-i)v_i= (m+1) \sum_{i=0}^{m+1} v_i =n(m+1)$.
	Therefore, by Lemma \ref{lem:geodesic_is_a_pivot_path}, 
	$d(v,0)=\min\{\ps_{-1}(v),\ps_{m+1}(v)\}\leq \lfloor\frac{\ps_{-1}(v)+\ps_{m+1}(v)}{2}\rfloor=\lfloor\frac{n(m+1)}{2}\rfloor.$

\end{proof}

\begin{lemma}\label{lem:ecc_lower_bound}
	For every dYoke graph $\Znm$ we have $\ecc_{\Znm}(0)\geq\lfloor\frac{n(m+1)}{2} \rfloor$.
\end{lemma}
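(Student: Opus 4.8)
The plan is to prove this (weak) lower bound by exhibiting a single vertex $v^\ast\in\Znm$ with $d(v^\ast,0)\ge\lfloor\frac{n(m+1)}{2}\rfloor$; by the definition of eccentricity this suffices. By Observations \ref{obs:corner_case_meqz} and \ref{obs:corner_case_neqo} I may assume $m>0$ and $n>1$, so that $1\le\lfloor n/2\rfloor\le n-1$.

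I would take $v^\ast$ to be the ``half--full buckets'' vertex: $v^\ast_0=v^\ast_{m+1}=\lfloor n/2\rfloor$, with all inner entries equal to $0$ when $n$ is even, and --- when $n$ is odd --- one additional inner entry $v^\ast_k=1$ placed at $k=\lceil\frac{m+1}{2}\rceil$ (which is an inner index since $m\ge 1$). In either case $\sum_{i=0}^{m+1}v^\ast_i=n$, so $v^\ast\in\Znm$. Moreover every partial sum $\sum_{i=0}^{p}v^\ast_i$ with $0\le p\le m$ equals either $\lfloor n/2\rfloor$ or, past the extra entry in the odd case, $\lfloor n/2\rfloor+1\le\frac{n+1}{2}\le n-1$, hence lies strictly between $0$ and $n$; therefore $\piv(v^\ast)=\{-1,m+1\}$.

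With $\piv(v^\ast)=\{-1,m+1\}$ in hand, Lemma \ref{lem:no_pivots} gives $\ps_{m+1}(v^\ast)=\sum_{i=0}^{m+1}i\,v^\ast_i$ and $\ps_{-1}(v^\ast)=\sum_{i=0}^{m+1}(m+1-i)\,v^\ast_i$, with $\ps_{-1}(v^\ast)+\ps_{m+1}(v^\ast)=(m+1)\sum_i v^\ast_i=n(m+1)$. Their difference is $\ps_{m+1}(v^\ast)-\ps_{-1}(v^\ast)=\sum_i(2i-m-1)v^\ast_i$; the two bucket terms (indices $0$ and $m+1$) cancel, so this difference is $0$ when $n$ is even and equals $2k-m-1$ when $n$ is odd, and it is precisely the choice $k=\lceil\frac{m+1}{2}\rceil$ that forces $|2k-m-1|\le 1$. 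Hence $|\ps_{-1}(v^\ast)-\ps_{m+1}(v^\ast)|\le 1$, and since the two values sum to $n(m+1)$, each is at least $\lfloor\frac{n(m+1)}{2}\rfloor$. Finally, by Lemma \ref{lem:geodesic_is_a_pivot_path} a geodesic from $v^\ast$ to $0$ is a pivot path and thus a shortest path with wall $p$ for some $p\in\piv(v^\ast)$, so $d(v^\ast,0)=\min_{p\in\piv(v^\ast)}\ps_p(v^\ast)=\min\{\ps_{-1}(v^\ast),\ps_{m+1}(v^\ast)\}\ge\lfloor\frac{n(m+1)}{2}\rfloor$, whence $\ecc_{\Znm}(0)\ge\lfloor\frac{n(m+1)}{2}\rfloor$.

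I do not anticipate a genuine obstacle: the only care required is the small parity bookkeeping that simultaneously keeps $v^\ast$ free of inner pivots and balances $\ps_{-1}(v^\ast)$ and $\ps_{m+1}(v^\ast)$ to within $1$. The conceptual point is merely that the extremal vertex witnessing this bound can be taken to be this almost trivial vertex, with all the real content supplied by Lemmas \ref{lem:no_pivots} and \ref{lem:geodesic_is_a_pivot_path}.
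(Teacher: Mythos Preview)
Your proposal is correct and follows essentially the same approach as the paper: construct the ``half-full buckets'' vertex, verify it has only the outer pivots, apply Lemma~\ref{lem:no_pivots} to get $\ps_{-1}+\ps_{m+1}=n(m+1)$, and conclude via Lemma~\ref{lem:geodesic_is_a_pivot_path}. The only cosmetic differences are that the paper places the extra $1$ (in the odd-$n$ case) at index $\lfloor\frac{m+1}{2}\rfloor$ rather than your $\lceil\frac{m+1}{2}\rceil$, and the paper computes $\ps_{-1}$ and $\ps_{m+1}$ exactly (as $\lceil\frac{n(m+1)}{2}\rceil$ and $\lfloor\frac{n(m+1)}{2}\rfloor$) rather than arguing via $|\ps_{-1}-\ps_{m+1}|\le 1$; both variants are equally valid.
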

\begin{proof}
	Let $u\in\Znm$ be defined as follows:
	$u_0=u_{m+1}=\lfloor\frac{n}{2}\rfloor$. 
	If $n$ is even, then $u_i=0$ for all $1\leq i\leq m$ and if $n$ is odd, then $u_{\lfloor\frac{m+1}{2}\rfloor}=1$ and $u_i=0$ for all other $1\leq i\leq m$.
	In all of these cases, $\piv(u)=\{-1,m+1\}$.
	Therefore, by Lemma \ref{lem:no_pivots}, $\ps_{m+1}(u)=\sum_{i=0}^{m+1}i u_i=\lfloor\frac{n(m+1)}{2}\rfloor$ and $\ps_{-1}(u) =\sum_{i=0}^{m+1}(m+1-i) u_i=\lceil\frac{n(m+1)}{2}\rceil$.
	It follows that $\ecc_{\Znm}(0)\geq d(u,0)=\min\{\ps_{m+1}(u), \ps_{-1}(u)\}=\lfloor\frac{n(m+1)}{2} \rfloor$.
\end{proof}

\begin{theorem}\label{thm:ecc_n_geq_m}
	If $m\leq n$, then $\ecc_{\Znm}(0) = \lfloor\frac{n(m+1)}{2}\rfloor$.
\end{theorem}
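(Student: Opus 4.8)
The plan is to prove the two inequalities $\ecc_{\Znm}(0) \geq \lfloor\frac{n(m+1)}{2}\rfloor$ and $\ecc_{\Znm}(0) \leq \lfloor\frac{n(m+1)}{2}\rfloor$ separately. The lower bound is already done: it is exactly Lemma \ref{lem:ecc_lower_bound}, which exhibits a vertex $u$ (with both buckets equal to $\lfloor\frac n2\rfloor$ and at most one interior $1$) whose distance to $0$ is $\lfloor\frac{n(m+1)}{2}\rfloor$. So the entire work is the upper bound: for every $v\in\Znm$ with $m\le n$, we must show $d(v,0)\le\lfloor\frac{n(m+1)}{2}\rfloor$.

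For the upper bound I would split on whether $v$ has an inner pivot. If $\piv(v)=\{-1,m+1\}$, then Lemma \ref{lem:no_pivots} gives directly $d(v,0)\le\lfloor\frac{n(m+1)}{2}\rfloor$, so we are done in that case. The remaining case is that $v$ has an inner pivot $p$ with $0\le p\le m$. Here I would use Observation \ref{obs:trivial_path}: an inner pivot $p$ yields a path of length $\sum_{i=1}^{p}i|v_i| + \sum_{i=p+1}^{m}(m+1-i)|v_i| \le \sum_{i=1}^{p}i + \sum_{i=1}^{m-p}i$. The key point is that $|v_i|\le 1$ for every $1\le i\le m$ (entries are in $P_3$), so the bound $\sum_{i=1}^{p}i + \sum_{i=1}^{m-p}i = \binom{p+1}{2}+\binom{m-p+1}{2}$ holds regardless of which pivot we pick. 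Choosing $p$ as close as possible to $m/2$ (using Fact \ref{fac:split_sum_center} to see this is the best choice among available pivots — and in fact among all of $0,\dots,m$), this is at most $\binom{\lceil m/2\rceil+1}{2}+\binom{\lfloor m/2\rfloor+1}{2}$. It then remains to check the numerical inequality
$$\binom{\lceil m/2\rceil+1}{2}+\binom{\lfloor m/2\rfloor+1}{2}\ \le\ \Big\lfloor\frac{n(m+1)}{2}\Big\rfloor\qquad\text{when }m\le n.$$
Writing $m=2k$ or $m=2k+1$, the left side is roughly $m^2/4+m/2$, while the right side is at least $m(m+1)/2$ (using $n\ge m$); since $m^2/4+m/2\le m(m+1)/2$ for all $m\ge 0$, this holds, and I would just verify the floor/ceiling bookkeeping in the two parities.

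There is one subtlety to handle carefully: Observation \ref{obs:trivial_path} requires an \emph{inner} pivot $p$ (so that $v_1,\dots,v_p$ are handled toward the left bucket and $v_{p+1},\dots,v_m$ toward the right bucket), and we must make sure such an inner pivot exists that is reasonably central. If the only inner pivots available are far from $m/2$, the bound $\binom{p+1}{2}+\binom{m-p+1}{2}$ could be large — but note it is always at most $\binom{m+1}{2}+\binom{1}{2}=\binom{m+1}{2}$ (the extreme choice $p=0$ or $p=m$, which are not genuine pivots but whose induced paths still make sense), and $\binom{m+1}{2}\le\lfloor\frac{n(m+1)}{2}\rfloor$ when $m\le n$ as well. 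So I would argue: if $v$ has any inner pivot, pick the one closest to $m/2$; if it has none, invoke Lemma \ref{lem:no_pivots}. In every case the distance is at most $\lfloor\frac{n(m+1)}{2}\rfloor$, which together with Lemma \ref{lem:ecc_lower_bound} gives equality.

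The main obstacle I anticipate is not conceptual but the careful parity/floor bookkeeping tying the binomial expression $\binom{\lceil m/2\rceil+1}{2}+\binom{\lfloor m/2\rfloor+1}{2}$ to $\lfloor\frac{n(m+1)}{2}\rfloor$ uniformly over $m\le n$ and both parities of $m$ and $n$ — and making sure that when $v$ has inner pivots but all of them lie far from the center, the chosen pivot path is still short enough. I expect this is handled cleanly by the observation that $|v_i|\le 1$ makes $\ps_p(v)\le\binom{m+1}{2}$ for \emph{any} pivot, and $\binom{m+1}{2}\le\lfloor\frac{n(m+1)}{2}\rfloor$ already when $n\ge m$.
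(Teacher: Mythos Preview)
Your proposal is correct and follows essentially the same approach as the paper: lower bound via Lemma~\ref{lem:ecc_lower_bound}, then for the upper bound split on whether $v$ has an inner pivot, invoking Lemma~\ref{lem:no_pivots} if not and Observation~\ref{obs:trivial_path} if so, and finally using $\binom{p+1}{2}+\binom{m-p+1}{2}\le\binom{m+1}{2}\le\lfloor\frac{n(m+1)}{2}\rfloor$ for $m\le n$. Your detour through ``choose a pivot near $m/2$'' is unnecessary---the paper skips it and goes straight to the uniform bound $\binom{m+1}{2}$ via Fact~\ref{fac:split_sum_center}, which you correctly identify at the end as sufficient.
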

\begin{proof}
	By Lemma \ref{lem:ecc_lower_bound}, it is sufficient to show that $\ecc_{\Znm}(0) \leq \lfloor\frac{n(m+1)}{2}\rfloor$.
	Let $v\in\Znm$. 
	By Lemma \ref{lem:no_pivots}, we can assume that $v$ has some inner pivot $p\in\piv(v)$.
	By Observation \ref{obs:trivial_path} and Fact \ref{fac:split_sum_center}, $d(v,0) \leq \sum_{i=0}^{p}i  + \sum_{i=0}^{m-p}i \leq \binom{m+1}{2}$.
	Since $m \le n$, $\binom{m+1}{n} \le \frac{n(m+1)}{2}$ and therefore $\binom{m+1}{2}\leq\lfloor\frac{n(m+1)}{2}\rfloor$.
\end{proof}

% ---------------------------------------------------------------
\subsection{The Case $n\leq m$}\label{subsec:n_leq_m}
% ---------------------------------------------------------------
In this subsection we calculate the eccentricity of $0$ in $\Znm$ in the case $n\leq m$.
Throughout this subsection we assume that $2\leq n\leq m$.
For $v\in \Znm$ we introduce the following notations:
\begin{enumerate}
	\item $p_l(v)=\max\{p\in \piv(v):p<\frac{m}{2}\}$.
	\item $p_r(v)=\min\{p\in \piv(v):p\geq\frac{m}{2}\}$.
	\item $I_c(v) = [p_l(v)+1,p_r(v)]$.
	\item $h(v)=\min\{|p-\frac{m}{2}|:p\in \piv(v)\}$.
	\item $\hnm = 
	\begin{cases}
	\frac{n}{2} & \mbox{if } 2\divides (m-n) \\
	\frac{n+1}{2} & \mbox{if } 2\ndivides (m-n)
	\end{cases}.$
	
\end{enumerate}
We abbreviate $p_l$, $p_r$ and $I_c$ when $v$ is evident.

\begin{definition}\label{def:aut_mu}
	Let $\mu:\Znm\rightarrow \Znm$ be the automorphism of $\Znm$ defined by $\mu(v)_i=-v_i$ for every $0\leq i\leq m+1$.
\end{definition}
\begin{observation}\label{obs:neg_Ic_to_positive}
	Let $v\in\Znm$ such that $\sum_{i\in I_c}v_i=-n$.
	Recall that, by Convention \ref{conv:cosets_are_integers},  the buckets are identified with non-negative integers.
	Therefore $I_c\subseteq [1,m]$ and $\sum_{i\in I_c}\mu(v)_i=n$.
\end{observation}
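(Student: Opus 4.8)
The plan is to reduce the statement to the single fact that $I_c\subseteq[1,m]$, i.e. that $p_l\geq 0$ and $p_r\leq m$. Once this is known, the rest is immediate: by Definition \ref{def:aut_mu}, $\mu(v)_i=-v_i$ for all $i$, so $\sum_{i\in I_c}\mu(v)_i=-\sum_{i\in I_c}v_i=n$. (The point of invoking Convention \ref{conv:cosets_are_integers} is precisely to rule out $0\in I_c$ or $m+1\in I_c$; equivalently, it guarantees that $\mu(v)$ is still a genuine vertex of $\Znm$, which in this case is automatic because $\mu(v)$ then agrees with $v$ in both bucket coordinates.)

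To prove $p_l\geq 0$, I would argue by contradiction. Since $-1\in\piv(v)$ always, $p_l\geq -1$; if $p_l<0$ then $p_l=-1$, so $I_c=[0,p_r]$ and $\sum_{i=0}^{p_r}v_i=-n$. Write $S_j=\sum_{i=0}^{j}v_i$. The key observation is that no $S_j$ with $0\leq j\leq p_r-1$ can be non-positive: if $j$ were the first such index, then since $S_0=v_0\geq 0$ (a bucket is a non-negative integer) and each step $S_j-S_{j-1}=v_j$ lies in $\{-1,0,1\}$ for $1\leq j\leq m$, we would get $S_j=0$, so $j$ would be an inner pivot of $v$ with $p_l=-1<j<p_r$; but the open interval $(p_l,p_r)$ contains no pivot of $v$, since $p_l$ is the largest pivot below $\frac{m}{2}$ and $p_r$ the smallest pivot at least $\frac{m}{2}$. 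Hence $S_j\geq 1$ for every $0\leq j\leq p_r-1$, and therefore $S_{p_r}\geq S_{p_r-1}-1\geq 0>-n$, contradicting $S_{p_r}=-n$. The inequality $p_r\leq m$ follows by the mirror-image argument applied to the right partial sums $T_j=\sum_{i=j}^{m+1}v_i$ (using $v_{m+1}\geq 0$ and converting a zero value $T_j=0$ into a pivot via $\sum_{i=0}^{j-1}v_i=\sum_{i=0}^{m+1}v_i-T_j\equiv 0\pmod{n}$).

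The step I expect to need the most care --- and the only genuine subtlety --- is the handling of the boundary cases $p_r=m+1$ (in the first argument) and $p_l=-1$ (in the second), where the relevant step of the partial-sum sequence could in principle be a bucket coordinate of size up to $n-1$ rather than an inner coordinate in $\{-1,0,1\}$. But this is exactly the situation covered by the remark above: a non-negative step can never move a partial sum from a non-negative value to a negative one, so these cases simply do not arise and the argument goes through unchanged. Everything else is routine bookkeeping with the definitions of $\piv$, $p_l$, $p_r$ and $I_c$.
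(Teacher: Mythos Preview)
Your proof is correct and supplies exactly the argument that the paper leaves implicit (the paper states this as a bare observation, with only the hint that buckets are non-negative integers). The discrete intermediate-value reasoning on the partial sums $S_j$ is precisely how one sees that a bucket in $I_c$ would force a pivot in the open interval $(p_l,p_r)$, contradicting the definitions of $p_l$ and $p_r$.

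One small correction to your parenthetical remark: it is not true that ``$\mu(v)$ then agrees with $v$ in both bucket coordinates''---$\mu$ negates \emph{every} coordinate, so $\mu(v)_0\equiv -v_0\pmod n$, which need not equal $v_0$. This slip is harmless for the proof, since $\mu$ is an automorphism and $\mu(v)$ is always a vertex of $\Znm$ regardless. The genuine reason $I_c\subseteq[1,m]$ matters for the second conclusion is that on inner coordinates the identity $\mu(v)_i=-v_i$ holds as an equality of \emph{integers} (not merely modulo $n$), which is what makes the computation $\sum_{i\in I_c}\mu(v)_i=-\sum_{i\in I_c}v_i=n$ valid as written.
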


The outline of this subsection is as follows.
We first construct two candidates for an antipode of $0$ in Definitions \ref{def:uz} and \ref{def:uo} and compute their distance from $0$ in Lemma \ref{lem:dist_of_uz}.
Then we prove that the maximal value of the two distances, is an upper bound on the distance of an arbitrary vertex $v$ in $\Znm$ from $0$.
We split this proof into three cases.
Note that by Observation \ref{obs:neg_Ic_to_positive}, we can assume that $\sum_{i\in I_c}v_i\in\{0,n\}$, since $0$ is a fixed point of $\mu$.
\begin{enumerate}
	\item $h(v)<\hnm$ (Lemma \ref{lem:close_to_middle_pivot}).
	\item $h(v)\geq\hnm$ and $\sum_{i\in I_c(v)}v_i=n$ (Lemma \ref{lem:middle_interval_is_n}).
	\item $h(v)\geq\hnm$ and $\sum_{i\in I_c(v)}v_i=0$ (Lemma \ref{lem:zero_middle_interval}).
\end{enumerate}

\begin{definition}[\sc $\uz$]\label{def:uz}
	Let $u\in\Znm$ such that $u_i=1$ for all $i$ with \mbox{$1\leq i\leq m$} and $u_0\equiv -\lfloor\frac{m-n}{2} \rfloor \bmod n$. 
	Denote this vertex by $\uz$ and denote $\dz=d(\uz,0)$.
\end{definition}
For example, $\uz[3][5]=(2,1,1,1,1,1,2)$ and $\uz[3][6]=(2,1,1,1,1,1,1,1)$.

\begin{definition}[\sc $\uo$]\label{def:uo}
	Let $n$ and $m$ be two integers such that $2\leq n<m$ and $2\ndivides (m-n)$.
	Let $u\in\Znm$ such that $u_{i}=0$ for $i=\lceil\frac{m+1}{2}\rceil$, $u_i=1$ for all $i$ with $1\leq i\leq m$ and $u_0\equiv -\lfloor\frac{m-n}{2} \rfloor \bmod n$. 
	Denote this vertex by $\uo$ and denote $\doo=d(\uo,0)$.
\end{definition}
For example, $\uo[2][5]=(1,1,1,0,1,1,1)$ and $\uo[3][6]=(2,1,1,1,0,1,1,2)$.

\begin{lemma}\label{lem:dist_of_uz}\ 
	\begin{enumerate}
		\item $\dz = \binom{\lfloor\frac{m+n}{2}\rfloor+1}{2} + \binom{\lceil\frac{m-n}{2}\rceil+1}{2}$.
		\item $\doo=\binom{\lceil\frac{m+n}{2}\rceil+1}{2} + \binom{\lfloor\frac{m-n}{2}\rfloor+1}{2} - \lceil\frac{m+1}{2}\rceil$ (for $n<m$ such that $2\ndivides (m-n)$).
	\end{enumerate}
	
\end{lemma}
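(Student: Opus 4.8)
The plan is to compute $d(\uz,0)$ and $d(\uo,0)$ directly by identifying the optimal pivot for each vertex and then invoking Lemma~\ref{lem:geodesic_is_a_pivot_path} together with Observation~\ref{obs:trivial_path}, Lemma~\ref{lem:number_of_shift_in_a_p_interval} and Lemma~\ref{lem:sufficient_for_number_of_shifts} to turn the geometric distance into a closed-form sum. First I would record the pivot structure of $\uz$. Since $(\uz)_i=1$ for all $1\le i\le m$ and $(\uz)_0\equiv-\lfloor\frac{m-n}{2}\rfloor$, the partial sums $\sum_{i=0}^{p}(\uz)_i$ run through an arithmetic-type progression that hits a multiple of $n$ exactly when $p+(\uz)_0\equiv 0\pmod n$, i.e. at $p\equiv\lfloor\frac{m-n}{2}\rfloor\pmod n$. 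Thus the inner pivots of $\uz$ are spaced $n$ apart, and the one closest to $m/2$ is $p^*=\lfloor\frac{m-n}{2}\rfloor$ (one checks $p^*<m/2$ and $p^*+n>m/2$ using $n\le m$). For this pivot, $I_c=[p^*+1,m+1]$ has length $\lceil\frac{m+n}{2}\rceil$ and $p^*=\lceil\frac{m-n}{2}\rceil$ entries lie to its left.

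Next I would evaluate the pivot-path length at $p^*$. All entries $v_i$ with $1\le i\le m$ equal $1$, so by Observation~\ref{obs:trivial_path} the induced $p^*$-pivot path has length $\sum_{i=1}^{p^*}i+\sum_{i=p^*+1}^{m}(m+1-i)=\binom{p^*+1}{2}+\binom{m-p^*+1}{2}=\binom{\lceil\frac{m-n}{2}\rceil+1}{2}+\binom{\lfloor\frac{m+n}{2}\rfloor+1}{2}$ (the bucket $u_0$ contributes nothing to the left block because $p^*$ is a pivot and the entries $v_1,\dots,v_{p^*}$ absorb exactly into it). To see this is optimal, I would compare $\ps_p(\uz)$ across all $p\in\piv(\uz)$: by Lemma~\ref{lem:geodesic_is_a_pivot_path} the distance equals $\min_{p\in\piv(\uz)}\ps_p(\uz)$, and by Fact~\ref{fac:split_sum_center} the sum $\binom{p+1}{2}+\binom{m-p+1}{2}$ is strictly increasing in $|p-\frac m2|$, so the pivot closest to $m/2$ wins — provided we also confirm that for this $p^*$ the trivial path of Observation~\ref{obs:trivial_path} is actually shortest, which follows from Lemma~\ref{lem:number_of_shift_in_a_p_interval} and Lemma~\ref{lem:sufficient_for_number_of_shifts} since all partial sums of $\uz$ within each side of $p^*$ are nonnegative (no unit is ever shifted out of an empty bucket). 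This gives part~(1).

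For part~(2) the argument is the same with one correction term. The vertex $\uo$ agrees with $\uz$ except that the entry at position $j:=\lceil\frac{m+1}{2}\rceil$ is $0$ instead of $1$, and here $2\ndivides(m-n)$ forces the nearest pivot $p^*$ to sit at distance $\frac{n+1}{2}$ rather than $\frac n2$ from $m/2$; a short parity check shows $j>p^*$, so $j$ lies in the right block $I_c$. Using Observation~\ref{obs:trivial_path} the induced path has length $\sum_{i=1}^{p^*}i+\sum_{\substack{i=p^*+1\\ i\neq j}}^{m}(m+1-i)=\binom{p^*+1}{2}+\binom{m-p^*+1}{2}-(m+1-j)$, and $m+1-j=m+1-\lceil\frac{m+1}{2}\rceil=\lceil\frac{m+1}{2}\rceil$ after one more floor/ceiling identity. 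With $p^*=\lfloor\frac{m-n}{2}\rfloor$ (the floor now, since the parity has flipped) and $m-p^*=\lceil\frac{m+n}{2}\rceil$, this rearranges to $\binom{\lceil\frac{m+n}{2}\rceil+1}{2}+\binom{\lfloor\frac{m-n}{2}\rfloor+1}{2}-\lceil\frac{m+1}{2}\rceil$. Optimality of $p^*$ again follows from Fact~\ref{fac:split_sum_center} after checking that perturbing the pivot by $n$ in either direction changes the two binomial terms by strictly more than the fixed $\lceil\frac{m+1}{2}\rceil$ correction can compensate — together with the bucket-emptiness check via Lemma~\ref{lem:sufficient_for_number_of_shifts}.

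The main obstacle I anticipate is not the two closed-form computations themselves but the optimality half: one must rule out that some pivot $p$ farther from $m/2$, but for which the hole at position $j$ happens to sit in a more favorable place (shortening the trivial path), beats $p^*$. Handling this cleanly requires bounding how much the single missing unit can ever save — at most $\max(j,m+1-j)\le\lceil\frac{m+1}{2}\rceil$ — and checking this against the minimum gap $\binom{p+1}{2}+\binom{m-p+1}{2}-\binom{p^*+1}{2}-\binom{m-p^*+1}{2}$ between consecutive admissible pivots, which is where the hypotheses $1<n<m$ and $2\ndivides(m-n)$ are genuinely used. The floor/ceiling bookkeeping tying $p^*$, $m-p^*$, and $j$ to the expressions $\lfloor\frac{m+n}{2}\rfloor$, $\lceil\frac{m-n}{2}\rceil$, etc., under the two parity regimes is routine but must be done carefully.
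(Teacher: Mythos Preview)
Your overall strategy matches the paper's: locate the inner pivots of $\uz$ and $\uo$, compute $\ps_p$ for each via Observation~\ref{obs:trivial_path} and Lemma~\ref{lem:number_of_shift_in_a_p_interval}, and minimize using Lemma~\ref{lem:geodesic_is_a_pivot_path} and Fact~\ref{fac:split_sum_center}. However, there is a concrete error in your pivot selection that propagates through both parts.

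You take $p^*=p_l=\lfloor\frac{m-n}{2}\rfloor$ as ``the pivot closest to $m/2$''. Your check that $p^*<\frac m2<p^*+n$ only shows that $p_l$ and $p_r=p_l+n=\lfloor\frac{m+n}{2}\rfloor$ straddle $m/2$; it does not say which is closer. In fact, when $2\nmid(m-n)$ one has $|p_r-\frac m2|=\frac{n-1}{2}<\frac{n+1}{2}=|p_l-\frac m2|$, so $p_r$ is strictly closer and $\ps_{p_l}(\uz)>\ps_{p_r}(\uz)$. Your computation with $p^*=p_l$ yields $\binom{\lfloor\frac{m-n}{2}\rfloor+1}{2}+\binom{\lceil\frac{m+n}{2}\rceil+1}{2}$, which is \emph{not} equal to the stated $\binom{\lceil\frac{m-n}{2}\rceil+1}{2}+\binom{\lfloor\frac{m+n}{2}\rfloor+1}{2}$ when $m-n$ is odd; the ``$=$'' you write there is false, and the slip is repeated where you write ``$p^*=\lceil\frac{m-n}{2}\rceil$ entries lie to its left'' immediately after defining $p^*=\lfloor\frac{m-n}{2}\rfloor$. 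The paper avoids this by working at $p_r$ throughout.

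The same issue bites in part~(2). Here $2\nmid(m-n)$ forces $|p_l-\frac m2|=|p_r-\frac m2|=\frac{n+1}{2}$, so the binomial parts agree, but the hole at $j=\lceil\frac{m+1}{2}\rceil$ saves $j$ when you pivot at $p_r$ (hole in the left block) versus only $m+1-j=\lfloor\frac{m+1}{2}\rfloor$ when you pivot at $p_l$ (hole in the right block). Your claimed identity $m+1-j=\lceil\frac{m+1}{2}\rceil$ is wrong for even $m$; with the correct value $\lfloor\frac{m+1}{2}\rfloor$, your $p_l$-path is one step too long whenever $m$ is even, and hence is not the geodesic. The paper's proof simply takes $p_r$, observes $\ps_{p_r}(\uo)\le\ps_{p_l}(\uo)$ from the hole's position, and reads off the formula. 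Once you switch to $p_r$, both computations and the optimality argument go through exactly as you outline.
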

\begin{proof}
\begin{enumerate}
\item By Lemma \ref{lem:number_of_shift_in_a_p_interval}, $\ps_{p}(\uz)=\sum_{i=0}^{p}i+\sum_{i=0}^{m-p}i=\binom{p+1}{2} + \binom{m-p+1}{2}$ for every inner pivot $p$ of $\uz$.
Note that $p_l(\uz)=\lfloor\frac{m-n}{2} \rfloor$ and $p_r(\uz)=\lfloor\frac{m+n}{2} \rfloor$, implying that $|p_r-\frac{m}{2}|\leq |p_l-\frac{m}{2}|$.
Note also, that the length of an outer pivot path is at least $\binom{m+1}{2}$.
Therefore, and by Lemma \ref{lem:geodesic_is_a_pivot_path} and Fact \ref{fac:split_sum_center}, $\dz=\ps_{p_r}(\uz)=\binom{p_r+1}{2} + \binom{m-p_r+1}{2}=\binom{\lfloor\frac{m+n}{2}\rfloor+1}{2} + \binom{\lceil\frac{m-n}{2}\rceil+1}{2}$.

\item Note that $p_l(\uo)=\lfloor\frac{m-n}{2}\rfloor$ and $p_r(\uo)=\lceil\frac{m+n}{2}\rceil$.
Therefore, in this case, $|p_r-\frac{m}{2}|=|p_l-\frac{m}{2}|$ and by Definition \ref{def:uo}, $\ps_{p_r}(\uo)\leq \ps_{p_l}(\uo)$, since the entry equal to $0$ in $I_c$ is indexed by $\lceil\frac{m+1}{2}\rceil$. This implies that $\doo=\ps_{p_r}(\uo)=\binom{\lceil\frac{m+n}{2}\rceil+1}{2} + \binom{\lfloor\frac{m-n}{2}\rfloor+1}{2} - \lceil\frac{m+1}{2}\rceil$.
\end{enumerate}
\end{proof}

\begin{definition}[\sc $\dnm$]
	Denote 
	$$
	\dnm = 
	\begin{cases}
	\dz & \mbox{if } 2\divides (m-n) \\
	\max\{\dz, \doo\} & \mbox{if } 2\ndivides (m-n)
	\end{cases}.
	$$
\end{definition}

\begin{lemma}\label{lem:close_to_middle_pivot}
	Let $v\in\Znm$ such that $h(v)<\hnm$. Then $d(v,0)\leq \dz\leq\dnm$.
\end{lemma}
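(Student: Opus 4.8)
The plan is to bound $d(v,0)$ from above by exhibiting an explicit short path from $v$ to $0$, and the natural candidate is a pivot path through the pivot of $v$ closest to $m/2$. Let $p$ be a pivot of $v$ with $|p-\frac{m}{2}|=h(v)$. By Observation \ref{obs:trivial_path} this pivot induces a path of length $\sum_{i=1}^{p}i|v_i|+\sum_{i=p+1}^{m}(m+1-i)|v_i|$, so my first task is to maximize this quantity over all $v\in\Znm$ with $h(v)<\hnm$. The key point is that, because $h(v)<\hnm$, the pivot $p$ lies strictly between $p_l(\uz)=\lfloor\frac{m-n}{2}\rfloor$ and $p_r(\uz)=\lfloor\frac{m+n}{2}\rfloor$ — i.e. $p$ is ``more central'' than the pivots of $\uz$ — so by Fact \ref{fac:split_sum_center} the ``ambient'' cost $\sum_{i=1}^{p}i+\sum_{i=1}^{m-p}i$ of a full pivot path at $p$ is at most the corresponding cost at $p_r(\uz)$, which is exactly $\dz$ by Lemma \ref{lem:dist_of_uz}(1).

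Next I would handle the fact that the generic bound $\sum_{i=1}^{p}i+\sum_{i=1}^{m-p}i$ presumes $|v_i|=1$ for all $1\le i\le m$ and ignores the constraints coming from $v$ being a genuine dYoke vertex with $p$ as a pivot. Since $v_i\in P_3=\{-1,0,1\}$ for inner entries, $|v_i|\le 1$ automatically; the subtlety is the buckets $v_0,v_{m+1}$, which can be large. So the argument splits according to whether $v_0$ and $v_{m+1}$ can be ``absorbed'' using the pivot $p$. The cleanest route: use the inner pivot $p$ to also allow handling of $v_0$ and $v_{m+1}$ via the outer pivots $-1$ and $m+1$ — concretely, take the path that zeroes $v_1,\dots,v_p$ by shifting toward the left bucket (incorporating $v_0$) and $v_{p+1},\dots,v_m$ toward the right bucket (incorporating $v_{m+1}$), invoking Lemmas \ref{lem:number_of_shift_in_a_p_interval} and \ref{lem:sufficient_for_number_of_shifts} with the $P$-intervals $[0,p]$ and $[p+1,m+1]$ to compute exact lengths $\sum_{i=0}^{p}(m+1-i)v_i$ or $\sum_{i=0}^{p}iv_i$ and $\sum_{i=p+1}^{m+1}iv_i$ or $\sum_{i=p+1}^{m+1}(m+1-i)v_i$, choosing on each side the direction that sends units ``inward'' toward $p$ so that the bucket contributions are weighted by the smaller coefficient. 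Bounding each $v_i$ ($1\le i\le m$) by $1$ and each bucket's weighted contribution appropriately, the total is again at most $\sum_{i=1}^{p}i+\sum_{i=1}^{m-p}i\le\dz$.

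The main obstacle I anticipate is the bookkeeping around the buckets when $p$ is close to one end — say $p$ small, so that the ``left side'' $[0,p]$ is short but $v_0$ could be as large as $n-1$. In that regime shifting $v_0$ inward costs about $p\cdot v_0$, which is small precisely because $p$ is small, but one must check this does not combine badly with a large $v_{m+1}$ on the long side; the inequality $h(v)<\hnm$ is exactly what rules out the worst configuration, and turning that into a clean estimate — probably by comparing term-by-term with the corresponding sum for $\uz$, possibly after replacing $v$ by $\mu(v)$ so that $\sum_{i\in I_c}v_i\ge 0$ as permitted by Observation \ref{obs:neg_Ic_to_positive} — is where the real work lies. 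Once that comparison is set up, Fact \ref{fac:split_sum_center} and Lemma \ref{lem:dist_of_uz}(1) close the argument, and $\dz\le\dnm$ is immediate from the definition of $\dnm$.
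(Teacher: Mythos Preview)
Your first paragraph \emph{is} the paper's proof, essentially verbatim: pick an inner pivot $p'$ with $|p'-\tfrac{m}{2}|=h(v)$, invoke Observation~\ref{obs:trivial_path} to get $d(v,0)\le\sum_{i=1}^{p'}i+\sum_{i=1}^{m-p'}i$, observe that $h(v)<\hnm$ forces $|p'-\tfrac{m}{2}|\le|p_r(\uz)-\tfrac{m}{2}|$ (the paper phrases this as $h(v)\le h(\uz)$), and conclude via Fact~\ref{fac:split_sum_center} and Lemma~\ref{lem:dist_of_uz}(1). That is the entire argument in the paper.

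The concern driving your second and third paragraphs is a phantom. Observation~\ref{obs:trivial_path} applies to an \emph{inner} pivot, and the path it describes already lands at $0$ with no extra work on the buckets: after every $v_i$ with $1\le i\le p'$ has been shifted into the left bucket, that bucket holds $\sum_{i=0}^{p'}v_i\equiv 0\pmod n$ (precisely because $p'$ is a pivot), hence equals $0$; symmetrically on the right. This is why the length formula in Observation~\ref{obs:trivial_path} is summed only over $1\le i\le m$ and carries no $v_0$ or $v_{m+1}$ term. So there is no ``main obstacle'': Lemmas~\ref{lem:number_of_shift_in_a_p_interval}--\ref{lem:sufficient_for_number_of_shifts}, the automorphism $\mu$, and any case analysis on bucket size are all unnecessary here, and you can delete paragraphs two and three. (A tiny wording correction: when $2\nmid(m-n)$ the pivot $p'$ need not lie \emph{strictly} between $p_l(\uz)$ and $p_r(\uz)$ --- it can coincide with $p_r(\uz)$ --- but equality in Fact~\ref{fac:split_sum_center} is harmless since the two sums then agree.)
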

\begin{proof}
	Let $p'$ be an inner pivot of $v$ for which $|p'-\frac{m}{2}|=h(v)$.
	By Observation \ref{obs:trivial_path}, $d(v,0)\leq \sum_{i=1}^{p'}i+\sum_{i=1}^{m-p'}i$.
	Note that if $2\divides(m-n)$, then $h(\uz)=\hnm$, and if $2\ndivides(m-n)$, then $h(\uz)=\hnm-1$. 
	Therefore $h(v)\leq h(\uz)$.
	By the proof of Lemma \ref{lem:dist_of_uz}, $\dz = \sum_{i=1}^{p}i+\sum_{i=1}^{m-p}i$ for $p\in\piv(\uz)$ for which $|p-\frac{m}{2}|=h(\uz)$.
	Therefore, by Fact \ref{fac:split_sum_center}, $d(v,0)\leq \dz$, since $|p'-\frac{m}{2}|\leq |p-\frac{m}{2}|$.
\end{proof}

The following lemma generalizes the part in Lemma \ref{lem:no_pivots} which states that if $\piv(v)=\{-1,m+1\}$ for some $v\in\Znm$, then $d(v,0)\leq \lfloor\frac{n(m+1)}{2}\rfloor$.

\begin{lemma}\label{lem:v_hat_dist_from_zero_center}
	Let $v\in\Znm$ such that $\sum_{i\in I_c}v_i=n$.
	Then $d(v,0)\leq \sum_{i=1}^{p_l}i + \sum_{i=1}^{m-p_r}i + \lfloor\frac{n(m+1)}{2}\rfloor.$
\end{lemma}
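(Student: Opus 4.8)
The plan is to produce, for an arbitrary $v\in\Znm$ with $\sum_{i\in I_c}v_i=n$, two explicit paths from $v$ to $0$ whose lengths average to the asserted bound; since $d(v,0)$ is at most the length of the shorter of the two and is an integer, the desired inequality follows.

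First I would use that $p_l$ and $p_r$ are pivots of $v$ to peel off the entries outside $I_c$. Moving $v_1,\dots,v_{p_l}$ into the left bucket one at a time, left to right, costs $\sum_{i=1}^{p_l}i|v_i|\le\sum_{i=1}^{p_l}i$ unit shifts, after which the left bucket holds $\sum_{i=0}^{p_l}v_i$, which equals $0$ by Convention \ref{conv:cosets_are_integers} since $p_l\in\piv(v)$. Symmetrically, moving $v_{p_r+1},\dots,v_m$ into the right bucket one at a time, right to left, costs $\sum_{i=p_r+1}^{m}(m+1-i)|v_i|\le\sum_{i=1}^{m-p_r}i$ shifts and empties the right bucket, since $p_r\in\piv(v)$ and $\sum_{i=0}^{m+1}v_i\equiv 0\pmod n$. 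This leaves a vertex $v'$ with $v'_i=v_i$ for $i\in I_c$, all other entries $0$, and $\sum_{i\in I_c}v'_i=n$, and it therefore suffices to prove $d(v',0)\le\lfloor\frac{n(m+1)}{2}\rfloor$ — the analogue of Lemma \ref{lem:no_pivots} for the block $I_c$.

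The key structural observation is that, because $p_l$ and $p_r$ are \emph{consecutive} pivots of $v$, the partial sums $\sum_{i=0}^{j}v_i$ are multiples of $n$ exactly at $j\in\{p_l,p_r\}$ within $[p_l,p_r]$ and change by at most $1$ between consecutive indices there; since $\sum_{i\in I_c}v_i=n\ne 0$ they cannot jump past a multiple of $n$, so $\sum_{i=p_l+1}^{j}v_i\in\{1,\dots,n-1\}$ for every $p_l<j<p_r$ (and it equals $n$ at $j=p_r$). Translated to $v'$, this says $\sum_{i=0}^{j}v'_i\ge 0$ and $\sum_{i=j}^{m+1}v'_i\ge 0$ for every $0\le j\le m+1$, which is exactly the hypothesis that drives the proof of Lemma \ref{lem:no_pivots}. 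Hence, by Lemmas \ref{lem:number_of_shift_in_a_p_interval} and \ref{lem:sufficient_for_number_of_shifts}, the path that pushes all of $v'$ leftward into the left bucket has length $\ps_{m+1}(v')=\sum_{i\in I_c}iv_i$, and the path that pushes all of $v'$ rightward into the right bucket has length $\ps_{-1}(v')=\sum_{i\in I_c}(m+1-i)v_i$; the partial-sum condition is precisely what makes these counts exact even when $I_c$ contains entries equal to $-1$, since then there is always enough positive mass on the relevant side of such an entry and no shift ever leaves an empty bucket.

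Finally I would combine the pieces: since $d(v',0)\le\min\{\ps_{m+1}(v'),\ps_{-1}(v')\}$ and this minimum is an integer not exceeding the average of the two lengths,
$$d(v',0)\le\left\lfloor\tfrac12\big(\ps_{m+1}(v')+\ps_{-1}(v')\big)\right\rfloor=\left\lfloor\tfrac12(m+1)\sum_{i\in I_c}v_i\right\rfloor=\left\lfloor\tfrac{n(m+1)}{2}\right\rfloor,$$
and adding back the two integer reduction costs $\sum_{i=1}^{p_l}i|v_i|\le\sum_{i=1}^{p_l}i$ and $\sum_{i=p_r+1}^{m}(m+1-i)|v_i|\le\sum_{i=1}^{m-p_r}i$ yields $d(v,0)\le\sum_{i=1}^{p_l}i+\sum_{i=1}^{m-p_r}i+\lfloor\frac{n(m+1)}{2}\rfloor$. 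The step I expect to be the real obstacle is the third paragraph: verifying rigorously that the two ``push everything to one side'' paths for $v'$ are genuine sequences of unit shifts with exactly the stated lengths, despite the $-1$ entries in $I_c$ and the $\{-1,0,1\}$ constraint on interior entries — this is where the inequality $\sum_{i=p_l+1}^{j}v_i\ge 1$ for $p_l<j<p_r$ is indispensable, and where one must be careful applying Lemmas \ref{lem:number_of_shift_in_a_p_interval} and \ref{lem:sufficient_for_number_of_shifts} (the $P$-interval containing the target bucket runs from $0$, resp.\ $p_l+1$, so that all of $I_c$ lies in a single $P$-interval). Everything else is the averaging bookkeeping above.
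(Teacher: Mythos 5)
Your proposal is correct and follows essentially the same route as the paper: reduce to the vertex supported on $I_c$ at a cost of $\sum_{i=1}^{p_l}i+\sum_{i=1}^{m-p_r}i$, verify the positivity of the partial sums inside $I_c$ (which holds because $p_l$ and $p_r$ are consecutive pivots and the total mass is $n$), apply Lemmas \ref{lem:number_of_shift_in_a_p_interval} and \ref{lem:sufficient_for_number_of_shifts} to get two one-sided paths whose lengths sum to $n(m+1)$, and take the floor of their average. The only superficial difference is that you phrase the two paths as the outer pivot paths of the reduced vertex rather than as its $p_l$- and $p_r$-pivot paths, which coincide once the entries outside $I_c$ vanish.
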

\begin{proof}
	$\sum_{i=1}^{p_l}i + \sum_{i=1}^{m-p_r}i$ is an upper bound on the number of steps needed to set to $0$ every $v_i$ not in $I_c$.
	We can therefore assume that $v_i=0$ for every $i\notin I_c$ and prove that $d(v,0)\leq \lfloor\frac{n(m+1)}{2}\rfloor$.
	% If both $p_l$ and $p_r$ are outer pivots, then $d(v,0)\leq \lfloor\frac{n(m+1)}{2}\rfloor$ by Lemma \ref{lem:no_pivots}.
	Since $\sum_{i\in I_c}v_i=n$, every $p_r$-pivot path of $v$ shifts left in $[0,p_r]$ and every $p_l$-pivot path of $v$ shifts right in $[p_l+1,m+1]$.
	Moreover, the assumption $\sum_{i\in I_c}v_i=n$ implies that $\sum_{i=j}^{p_r}v_i> 0$ for every $j\in I_c$ and $\sum_{i=p_l+1}^{j}v_i> 0$ for every $j\in I_c$.
	Therefore, by Lemmas \ref{lem:number_of_shift_in_a_p_interval} and \ref{lem:sufficient_for_number_of_shifts}, $d(v,0)\leq\min\{\ps_{p_l}(v),\ps_{p_r}(v)\}\leq \lfloor\frac{\sum_{i\in I_c}iv_i + \sum_{i\in I_c}(m+1-i)v_i}{2}\rfloor=\lfloor\frac{n(m+1)}{2}\rfloor.$
\end{proof}

\begin{observation}\label{obs:dz_doo_split}
If $2|(m-n)$ and $v=\uz$, then $\sum_{i\in I_c}iv_i = \sum_{i\in I_c}(m+1-i)v_i$.
Moreover, if $v=\uo$, then $|\sum_{i\in I_c}iv_i - \sum_{i\in I_c}(m+1-i)v_i|\leq 1$.
Therefore, the following is implied by the proofs of Lemmas \ref{lem:dist_of_uz} and \ref{lem:v_hat_dist_from_zero_center}.	
	\begin{enumerate}
		\item If $2|(m-n)$, then $\dz = \sum_{i=1}^{\frac{m-n}{2}}i + \sum_{i=1}^{m-\frac{m+n}{2}}i + \frac{n(m+1)}{2}$.
		\item $\doo = \sum_{i=1}^{\lfloor\frac{m-n}{2}\rfloor}i + \sum_{i=1}^{m-\lceil\frac{m+n}{2}\rceil}i + \lfloor\frac{n(m+1)}{2}\rfloor$.
	\end{enumerate}
\end{observation}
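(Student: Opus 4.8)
The plan is to verify the two displayed identities about the central interval $I_c$ and then to combine them with the computations already made in the proofs of Lemmas \ref{lem:dist_of_uz} and \ref{lem:v_hat_dist_from_zero_center}; in the end both stated equalities turn out to be a mere rearrangement of the values of $\dz$ and $\doo$ obtained there.

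First I would pin down $I_c$ in each case. For $v=\uz$ with $2\mid(m-n)$ one has $p_l=\frac{m-n}{2}$, $p_r=\frac{m+n}{2}$, so $I_c=[\frac{m-n}{2}+1,\frac{m+n}{2}]$; for $v=\uo$ one has $p_l=\lfloor\frac{m-n}{2}\rfloor$, $p_r=\lceil\frac{m+n}{2}\rceil$, so $I_c=[\frac{m-n+1}{2},\frac{m+n+1}{2}]$. In both cases $I_c\subseteq[1,m]$ and $I_c$ is invariant under the reflection $i\mapsto m+1-i$, hence $\sum_{i\in I_c}(2i-m-1)=0$. Since $v$ is constant $1$ on $I_c$ when $v=\uz$, this symmetry gives $\sum_{i\in I_c}iv_i=\sum_{i\in I_c}(m+1-i)v_i$. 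When $v=\uo$, the only index of $I_c$ where $v$ differs from the all-ones vector is the single $0$-entry, at $k:=\lceil\frac{m+1}{2}\rceil\in I_c$, so
\[
\sum_{i\in I_c}iv_i-\sum_{i\in I_c}(m+1-i)v_i=\sum_{i\in I_c}(2i-m-1)v_i=-(2k-m-1)=m+1-2\Big\lceil\frac{m+1}{2}\Big\rceil\in\{0,-1\},
\]
which yields the bound $\le 1$ and tells us that $\sum_{i\in I_c}iv_i$ is the smaller of the two sums.

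Next I would invoke the proof of Lemma \ref{lem:dist_of_uz}, which shows $\dz=\ps_{p_r}(\uz)=\binom{p_r+1}{2}+\binom{m-p_r+1}{2}$ and $\doo=\ps_{p_r}(\uo)=\binom{p_r+1}{2}+\binom{m-p_r+1}{2}-\lceil\frac{m+1}{2}\rceil$. Writing $\binom{p_r+1}{2}=\sum_{i=1}^{p_r}i=\sum_{i=1}^{p_l}i+\sum_{i\in I_c}i$ and $\binom{m-p_r+1}{2}=\sum_{i=1}^{m-p_r}i$, and noting that $\sum_{i\in I_c}i=\sum_{i\in I_c}iv_i$ for $v=\uz$, while $\sum_{i\in I_c}i-\lceil\frac{m+1}{2}\rceil=\sum_{i\in I_c}iv_i$ for $v=\uo$, I would obtain in both cases
\[
d(v,0)=\sum_{i=1}^{p_l}i+\sum_{i=1}^{m-p_r}i+\sum_{i\in I_c}iv_i,
\]
which is precisely the shape of the bound produced in the proof of Lemma \ref{lem:v_hat_dist_from_zero_center} (clearing the entries outside $I_c$, then $\min\{\sum_{i\in I_c}iv_i,\sum_{i\in I_c}(m+1-i)v_i\}$). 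It then remains to evaluate the central sum. Since $\sum_{i\in I_c}v_i=|I_c|=p_r-p_l=n$ for $v=\uz$, and $\sum_{i\in I_c}v_i=|I_c|-1=(n+1)-1=n$ for $v=\uo$ (its one $0$-entry lies in $I_c$), in both cases $\sum_{i\in I_c}iv_i+\sum_{i\in I_c}(m+1-i)v_i=(m+1)n$. Combined with the first step: for $\uz$ the two summands are equal, so each is $\frac{n(m+1)}{2}$, an integer because $2\mid(m-n)$ makes $n(m+1)$ even; for $\uo$, $\sum_{i\in I_c}iv_i$ is the smaller of two integers summing to $n(m+1)$ and differing by at most $1$, hence equals $\lfloor\frac{n(m+1)}{2}\rfloor$. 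Substituting back, with $p_l=m-p_r=\frac{m-n}{2}$ in the first case and $p_l=\lfloor\frac{m-n}{2}\rfloor$, $m-p_r=m-\lceil\frac{m+n}{2}\rceil$ in the second, gives the two displayed formulas.

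I do not expect a genuine obstacle here: the argument is essentially a rearrangement of identities already established, and the only points requiring care are the floor/ceiling bookkeeping and the (routine) check that $I_c$ is symmetric about $\frac{m}{2}$ in each of the two cases. A pleasant byproduct is that this computation simultaneously confirms that the upper bound of Lemma \ref{lem:v_hat_dist_from_zero_center} is attained by $\uz$ and by $\uo$.
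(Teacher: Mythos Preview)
Your proposal is correct and follows essentially the same route the paper intends: the observation in the paper is stated without a separate proof, merely citing the proofs of Lemmas \ref{lem:dist_of_uz} and \ref{lem:v_hat_dist_from_zero_center}, and you have supplied exactly the bookkeeping those references leave implicit. Your symmetry argument for $I_c$ under $i\mapsto m+1-i$, the identification of $\sum_{i\in I_c}v_i=n$ in both cases, and the resulting evaluation of $\sum_{i\in I_c}iv_i$ as $\frac{n(m+1)}{2}$ (resp.\ $\lfloor\frac{n(m+1)}{2}\rfloor$) are precisely what the paper's ``implied by'' is pointing to.
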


\begin{lemma}\label{lem:middle_interval_is_n}
	Let $v\in\Znm$ such that $h(v)\geq\hnm$ and $\sum_{i\in I_c(v)}v_i=n$. 
	Then $d(v,0) \leq \dnm$.
\end{lemma}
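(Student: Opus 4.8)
The plan is to feed $v$ into Lemma~\ref{lem:v_hat_dist_from_zero_center} and then use the hypothesis $h(v)\geq\hnm$ to show that the "off-center" part of the resulting bound is no larger than the corresponding quantity computed in Observation~\ref{obs:dz_doo_split} for $\uz$ (when $2\mid(m-n)$) or for $\uo$ (when $2\nmid(m-n)$); this will give $d(v,0)\leq\dz$, respectively $d(v,0)\leq\doo$, and hence $d(v,0)\leq\dnm$ in both cases.

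Concretely, first I would invoke Lemma~\ref{lem:v_hat_dist_from_zero_center}, which applies since $\sum_{i\in I_c(v)}v_i=n$, to obtain
$$d(v,0)\leq \sum_{i=1}^{p_l}i+\sum_{i=1}^{m-p_r}i+\left\lfloor\tfrac{n(m+1)}{2}\right\rfloor,$$
where $p_l=p_l(v)$ and $p_r=p_r(v)$. Now $p_l$ and $p_r$ are pivots of $v$ with $p_l<\tfrac{m}{2}\leq p_r$, so by the definition of $h(v)$ we have $\tfrac{m}{2}-p_l\geq h(v)\geq\hnm$ and $p_r-\tfrac{m}{2}\geq h(v)\geq\hnm$; equivalently $p_l\leq\tfrac{m}{2}-\hnm$ and $m-p_r\leq\tfrac{m}{2}-\hnm$. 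A short parity check identifies $\tfrac{m}{2}-\hnm$ with the integer $\tfrac{m-n}{2}$ when $2\mid(m-n)$ and with $\lfloor\tfrac{m-n}{2}\rfloor$ when $2\nmid(m-n)$; call this integer $k$. Since $p_l$ and $m-p_r$ are integers that are at most $k$ — and using the convention that an empty sum is $0$, which handles $p_l=-1$ and $p_r=m+1$ — I get $\sum_{i=1}^{p_l}i+\sum_{i=1}^{m-p_r}i\leq 2\sum_{i=1}^{k}i$, hence $d(v,0)\leq 2\sum_{i=1}^{k}i+\lfloor\tfrac{n(m+1)}{2}\rfloor$.

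It then remains to recognize the right-hand side as $\dz$, respectively $\doo$. This is exactly Observation~\ref{obs:dz_doo_split}, once one records the arithmetic identities $m-\tfrac{m+n}{2}=\tfrac{m-n}{2}=k$ (valid when $2\mid(m-n)$, in which case $n(m+1)$ is even so the floor is vacuous) and $m-\lceil\tfrac{m+n}{2}\rceil=\lfloor\tfrac{m-n}{2}\rfloor=k$ (valid when $2\nmid(m-n)$). Thus for $2\mid(m-n)$ we get $d(v,0)\leq\dz=\dnm$, and for $2\nmid(m-n)$ we get $d(v,0)\leq\doo\leq\max\{\dz,\doo\}=\dnm$, which finishes the proof.

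The conceptual step — "$h(v)$ large forces $p_l$ small and $p_r$ large, so the central interval is wide and the trivial path outside it is short" — is immediate; I expect the only real work to be the parity bookkeeping around $\hnm$, $\lfloor\tfrac{m-n}{2}\rfloor$ and $\lfloor\tfrac{n(m+1)}{2}\rfloor$ together with the two displayed identities, all of which are routine but need checking in each parity subcase.
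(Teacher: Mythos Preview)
Your proposal is correct and follows essentially the same approach as the paper: apply Lemma~\ref{lem:v_hat_dist_from_zero_center}, use $h(v)\geq\hnm$ to bound $p_l$ and $m-p_r$ by the integer $k=\tfrac{m}{2}-\hnm$, and then match the result against Observation~\ref{obs:dz_doo_split} in each parity case. The only difference is cosmetic presentation---you bound $\sum_{i=1}^{p_l}i+\sum_{i=1}^{m-p_r}i\leq 2\sum_{i=1}^{k}i$ directly, whereas the paper writes out $\dnm-d(v,0)$ as an explicit nonnegative sum---but the underlying inequalities are identical.
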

\begin{proof}
	Assume that $2|(m-n)$.
	By Observation \ref{obs:dz_doo_split}, 
	$\dnm = \dz = \sum_{i=1}^{\frac{m-n}{2}}i + \sum_{i=1}^{m-\frac{m+n}{2}}i + \frac{n(m+1)}{2}.$
	By Lemma \ref{lem:v_hat_dist_from_zero_center}, $d({v},0)\leq \sum_{i=1}^{p_l}i + \sum_{i=1}^{m-p_r}i + \frac{n(m+1)}{2}.$
	Note that both $p_l\leq\frac{m-n}{2}$ and $p_r\geq\frac{m+n}{2}$, since $h(v)\geq\hnm$.
	Therefore
	$ \dnm - d(v,0)\geq \sum_{i=p_l+1}^{\frac{m-n}{2}}i + \sum_{i=\frac{m+n}{2}+1}^{p_r}m+1-i \geq 0$. 
	
	Otherwise assume that $2\ndivides(m-n)$.
	By Observation \ref{obs:dz_doo_split}, 
	$\dnm\geq \doo = \sum_{i=1}^{\lfloor\frac{m-n}{2}\rfloor}i + \sum_{i=1}^{m-\lceil\frac{m+n}{2}\rceil}i + \lfloor\frac{n(m+1)}{2}\rfloor.$
	By Lemma \ref{lem:v_hat_dist_from_zero_center}, $d({v},0)\leq \sum_{i=1}^{p_l}i + \sum_{i=1}^{m-p_r}i + \lfloor\frac{n(m+1)}{2}\rfloor.$
	Note that both $p_l\leq \lfloor\frac{m-n}{2}\rfloor$ and $p_r\geq\lceil\frac{m+n}{2}\rceil$, since $h(v)\geq\hnm$.
	Therefore
	$
	\dnm - d(v,0) \geq
	\sum_{i=p_l+1}^{\lfloor\frac{m-n}{2}\rfloor}i + \sum_{i=\lceil\frac{m+n}{2}\rceil+1}^{p_r}m+1-i \geq 0
	$.
\end{proof}

% ---------------------------------------------------------------
% \subsubsection{The case: $\sum_{i\in I_c}v_i=0$}
% ---------------------------------------------------------------
\begin{lemma}\label{lem:upper_middle_is_zero}
	Let $v\in\Znm$ such that $I_c(v)\subseteq[1,m]$ and $\sum_{i\in I_c(v)}v_i=0$. Then 
	$$d(v,0) \leq \sum_{i=1}^{p_l}i + \sum_{i=1}^{m-p_r}i + \lfloor\frac{\Icwidth}{2}\rfloor\lceil\frac{\Icwidth}{2}\rceil$$ 
	where $\Icwidth=|I_c(v)|$.
\end{lemma}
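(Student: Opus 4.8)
The plan is to reduce the estimate to a statement about a single $P$-interval and then bound the cost of geodesics inside an interval whose entries sum to $0$. First, exactly as in the proof of Lemma~\ref{lem:v_hat_dist_from_zero_center}, the term $\sum_{i=1}^{p_l}i + \sum_{i=1}^{m-p_r}i$ is an upper bound on the number of steps needed to zero out all entries outside $I_c(v)$ (handle $v_1,\dots,v_{p_l}$ left-to-right against the left bucket and $v_{p_r+1},\dots,v_m$ right-to-left against the right bucket, using the pivots $p_l,p_r\in\piv(v)$). So we may assume $v_i=0$ for all $i\notin I_c$, and it remains to show $d(v,0)\le\lfloor\tfrac{\Icwidth}{2}\rfloor\lceil\tfrac{\Icwidth}{2}\rceil$ for a vertex supported on $I_c=[p_l+1,p_r]$ with $\sum_{i\in I_c}v_i=0$, where $\Icwidth=p_r-p_l$.

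Since $\sum_{i\in I_c}v_i=0$ and $I_c\subseteq[1,m]$, the interval contains no bucket, and both endpoints $p_l$ and $p_r$ are (inner) pivots of $v$; I can reindex $I_c$ as $[1,\Icwidth]$ (shifting indices only changes the cost, not the combinatorics). The key observation is that because the entries inside $I_c$ sum to zero, every $+1$ inside the interval can be cancelled against a $-1$ inside the interval without ever touching an outside bucket: a unit at position $a$ can be slid to position $b$ at a cost of $|a-b|$ unit shifts. Thus $d(v,0)$ is at most the minimum, over all ways of pairing the $+1$'s with the $-1$'s inside $[1,\Icwidth]$, of the total pairing distance; and since we are free to choose the path, it suffices to exhibit one good pairing. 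The worst case is when every position carries alternating $\pm1$, or — to get the stated clean bound — when the first $\lceil\Icwidth/2\rceil$ positions are $+1$ and the last $\lfloor\Icwidth/2\rfloor$ are $-1$ (or a sub-configuration thereof); in any configuration with a zero-sum, pairing the $k$-th leftmost $+1$ with the $k$-th leftmost $-1$ gives total cost at most that of this extreme block configuration. For that configuration, pairing position $j$ (a $+1$, $1\le j\le r:=\lceil\Icwidth/2\rceil$) with position $\Icwidth+1-j$ (a $-1$) costs $\Icwidth+1-2j$, and summing over $j=1,\dots,r$ gives $r\Icwidth + r - 2\cdot\tfrac{r(r+1)}{2} = r(\Icwidth - r) = \lceil\tfrac{\Icwidth}{2}\rceil\lfloor\tfrac{\Icwidth}{2}\rfloor$, which is exactly the claimed bound.

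To turn this into a clean argument I would phrase it via Observation~\ref{obs:trivial_path}-style "handle entries one at a time": process the nonzero entries of $I_c$ from left to right; when I reach a $+1$ at position $a$, it will later be matched to some $-1$ at a position $b>a$ with at most $\lfloor\Icwidth/2\rfloor$ positions of the same sign to its left, so the cumulative displacement telescopes to the sum computed above; monotone rearrangement (moving a $+1$ right or a $-1$ left within $[1,\Icwidth]$, preserving zero-sum) only increases the pairing cost, which justifies reducing to the block configuration. The main obstacle is making the "monotone rearrangement increases cost" step precise without a case explosion — i.e. showing rigorously that among all zero-sum $\pm1$-patterns on $\Icwidth$ positions the all-$+1$-then-all-$-1$ pattern maximizes the minimum pairing distance; this is an exchange argument (if two adjacent positions hold $-1$ then $+1$, swapping them does not decrease the optimal pairing cost), and I expect it to be the one spot requiring care, everything else being the routine "slide units toward each other" bookkeeping already used for Observation~\ref{obs:trivial_path}.
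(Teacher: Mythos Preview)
Your reduction step and overall strategy are correct, but you are working harder than necessary, and the spot you flag as ``the one spot requiring care'' is precisely the spot the paper bypasses entirely. After reducing to a vertex supported on $I_c$, the paper does not build an explicit pairing-and-sliding path. Instead it takes any $p_l$- (or $p_r$-) pivot path $P$, observes that $I_c$ is a $P$-interval (since the support is contained in $I_c$), and applies Lemma~\ref{lem:number_of_shift_in_a_p_interval} directly: if $P$ shifts left in $I_c$ then $d_{I_c}(P)=\sum_{i\in I_c} i\,v_i$. Now the whole problem is to bound a single linear functional $\sum_{i\in I_c} i\,v_i$ over zero-sum $\{-1,0,1\}$-vectors on $\Icwidth$ positions; this is maximised by putting the $\lfloor\Icwidth/2\rfloor$ entries equal to $-1$ at the smallest indices and the $\lfloor\Icwidth/2\rfloor$ entries equal to $+1$ at the largest, giving $\lfloor\Icwidth/2\rfloor\lceil\Icwidth/2\rceil$ with no exchange argument at all.

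Your pairing approach works too, and in fact your worry about the rearrangement step is overstated: with the sorted pairing ($k$-th leftmost $+1$ against $k$-th leftmost $-1$), if there are $K$ pairs then both $a_k$ and $b_k$ lie in $[k,\Icwidth-K+k]$, so each $|a_k-b_k|\le\Icwidth-K$ and the total is at most $K(\Icwidth-K)\le\lfloor\Icwidth/2\rfloor\lceil\Icwidth/2\rceil$; no swap argument is needed. What \emph{does} need care in your route, and what you gloss over, is realising an arbitrary pairing as an actual path in $\Znm$ (entries are confined to $\{-1,0,1\}$, so a $+1$ cannot be slid through another $+1$); this is fixable by processing pairs in a suitable order, but it is exactly the bookkeeping that invoking Lemma~\ref{lem:number_of_shift_in_a_p_interval} avoids. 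Also, your ``extreme block'' with $\lceil\Icwidth/2\rceil$ plus-ones and $\lfloor\Icwidth/2\rfloor$ minus-ones does not sum to zero when $\Icwidth$ is odd; the correct extremal configuration has $\lfloor\Icwidth/2\rfloor$ of each sign.
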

\begin{proof}
	As in the beginning of the proof of Lemma \ref{lem:v_hat_dist_from_zero_center}, we can assume that $v_i=0$ for every $i\notin I_c$ and prove that $d(v,0)\leq \lfloor\frac{\Icwidth}{2}\rfloor\lceil\frac{\Icwidth}{2}\rceil$.
	
	Let $P$ be any $p_l$- or $p_r$-pivot path of $v$.
	$I_c$ is clearly a $P$-interval, since $\sum_{i\in I_c}v_i=0$ and $I_c(v)\subseteq[1,m]$.
	Assume without loss of generality, that $P$ shifts left in $I_c$.
	By Lemma \ref{lem:number_of_shift_in_a_p_interval}, $d_{I_c}(P) = \sum_{i\in I_c}iv_i$.
	The number of entries of $v$ that are equal to $1$ is equal to the number of entries of $v$ that are equal to $-1$, since $\sum_{i\in I_c}v_i = 0$.
	$I_c$ contains at most $\lfloor\frac{\Icwidth}{2}\rfloor$ entries equal to $1$ and the same number of entries equal to $-1$. The maximum value of $\sum_{i\in I_c}iv_i$ is obtained when the first $\lfloor\frac{\Icwidth}{2}\rfloor$ entries in $I_c$ are equal to $-1$ and the last $\lfloor\frac{\Icwidth}{2}\rfloor$ entries are equal to $1$.
	Therefore $\sum_{i\in I_c}iv_i\leq \lfloor\frac{\Icwidth}{2}\rfloor\lceil\frac{\Icwidth}{2}\rceil$.
\end{proof}

\begin{fact}\label{fac:split_sum}
	Let $a$ and $b$ be two positive integers.
	Then $\sum_{i=1}^a i + \sum_{i=1}^{b} i > \lfloor\frac{a+b}{2}\rfloor\lceil\frac{a+b}{2}\rceil$.
\end{fact}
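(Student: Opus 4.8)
The plan is to reduce this inequality to an elementary algebraic statement by writing both sides in closed form. First I would use $\sum_{i=1}^a i = \binom{a+1}{2} = \frac{a(a+1)}{2}$ (and likewise for $b$) to rewrite the left-hand side as $\frac{a(a+1)+b(b+1)}{2}$. For the right-hand side, I would observe that $\lfloor\frac{a+b}{2}\rfloor\lceil\frac{a+b}{2}\rceil = \lfloor\frac{(a+b)^2}{4}\rfloor \le \frac{(a+b)^2}{4}$, so for the purpose of a strict lower bound it is enough to compare the left-hand side against $\frac{(a+b)^2}{4}$.

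It then suffices to prove the strict inequality $\frac{a(a+1)+b(b+1)}{2} > \frac{(a+b)^2}{4}$. Clearing denominators turns this into $2a(a+1)+2b(b+1) > (a+b)^2$, which after expansion and cancellation becomes $a^2 - 2ab + b^2 + 2a + 2b > 0$, i.e. $(a-b)^2 + 2(a+b) > 0$. Since $a$ and $b$ are positive integers, $2(a+b)\ge 4>0$, so this holds. Chaining this strict inequality with the earlier non-strict bound $\lfloor\frac{a+b}{2}\rfloor\lceil\frac{a+b}{2}\rceil \le \frac{(a+b)^2}{4}$ yields the claim.

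There is essentially no obstacle here; the only point that needs a moment's care is to keep the inequality strict when passing through the estimate on the right-hand side, which works because the additive term $2(a+b)$ in $(a-b)^2+2(a+b)$ is strictly positive. A slightly more conceptual alternative, if one prefers, is to note that for a fixed sum $s=a+b$ the quantity $\sum_{i=1}^a i+\sum_{i=1}^b i = \frac{a^2+b^2+s}{2}$ is minimized over integer splittings at the balanced split $a=\lfloor s/2\rfloor$, $b=\lceil s/2\rceil$, and then verify the inequality only in that extremal case, where it collapses to $\frac{(\lceil s/2\rceil-\lfloor s/2\rfloor)^2+s}{2}>0$; but the direct computation above is shorter and I would go with it.
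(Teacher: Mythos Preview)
Your argument is correct. The paper states this as a \emph{Fact} and gives no proof at all, so there is nothing to compare against; your direct computation via $\sum_{i=1}^a i + \sum_{i=1}^b i = \tfrac{a(a+1)+b(b+1)}{2}$ and the bound $\lfloor\tfrac{a+b}{2}\rfloor\lceil\tfrac{a+b}{2}\rceil \le \tfrac{(a+b)^2}{4}$, reducing to $(a-b)^2+2(a+b)>0$, is a clean and complete justification.
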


\begin{lemma}\label{lem:zero_middle_interval}
	Let $v\in\Znm$ such that $h(v)\geq\hnm$ and $\sum_{i\in I_v}v_i=0$.
	Then $d(v,0)\leq\dnm$.
\end{lemma}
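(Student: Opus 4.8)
The plan is to split on whether the central interval $I_c(v)=[p_l+1,p_r]$ meets a bucket. When it does I would reduce to Lemma~\ref{lem:middle_interval_is_n} by means of the sign-flipping automorphism $\mu$ of Definition~\ref{def:aut_mu}; when $I_c(v)\subseteq[1,m]$ I would feed the distance estimate of Lemma~\ref{lem:upper_middle_is_zero} into Fact~\ref{fac:split_sum} and compare with $\dz\le\dnm$.

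\emph{The bucket case.} Suppose $I_c(v)\not\subseteq[1,m]$, i.e.\ $p_l=-1$ or $p_r=m+1$. By the obvious left--right symmetry of $\Znm$ (the automorphism $v\mapsto(v_{m+1},\dots,v_0)$, which preserves $\piv$, $h$ and $\sum_{i\in I_c}v_i$ up to relabeling) I may assume $p_l=-1$; the two possibilities cannot occur together, since that would force $\piv(v)=\{-1,m+1\}$, and then $v_0\ge1$ and $v_{m+1}\ge1$ (otherwise $0$ or $m$ would be an inner pivot), so the partial sums $\sum_{i=0}^{j}v_i$ would run from $v_0>0$ down to $-v_{m+1}<0$ and hence vanish at some inner index. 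With $p_l=-1$ we get $v_0\ge1$ (else $0$ is an inner pivot below $m/2$), so $v_0\in[1,n-1]$ and $\mu(v)_0=n-v_0$. Since $\mu$ only changes signs, $\piv(\mu(v))=\piv(v)$, hence $h(\mu(v))=h(v)\ge\hnm$ and $I_c(\mu(v))=I_c(v)$; and, using $\sum_{i=1}^{p_r}v_i=\sum_{i\in I_c}v_i-v_0=-v_0$,
$$\sum_{i\in I_c}\mu(v)_i=(n-v_0)-\sum_{i=1}^{p_r}v_i=(n-v_0)+v_0=n.$$
Thus $\mu(v)$ satisfies the hypotheses of Lemma~\ref{lem:middle_interval_is_n}, and since $\mu$ is an automorphism fixing $0$ we conclude $d(v,0)=d(\mu(v),0)\le\dnm$.

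\emph{The main case $I_c(v)\subseteq[1,m]$.} Here Lemma~\ref{lem:upper_middle_is_zero} gives
$$d(v,0)\le\sum_{i=1}^{p_l}i+\sum_{i=1}^{m-p_r}i+\Big\lfloor\tfrac{\gamma}{2}\Big\rfloor\Big\lceil\tfrac{\gamma}{2}\Big\rceil,\qquad\gamma:=|I_c(v)|=p_r-p_l,$$
with $0\le p_l$ and $p_r\le m$. From $h(v)\ge\hnm$ together with $p_l<\tfrac m2\le p_r$ one reads off $p_l\le\lfloor\tfrac{m-n}{2}\rfloor$ and $m-p_r\le\lfloor\tfrac{m-n}{2}\rfloor$, whence $\gamma=m-p_l-(m-p_r)\ge m-2\lfloor\tfrac{m-n}{2}\rfloor\ge n$. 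By Lemma~\ref{lem:dist_of_uz}(1), $\dz=\sum_{i=1}^{a}i+\sum_{i=1}^{b}i$ with $a=\lfloor\tfrac{m+n}{2}\rfloor$, $b=\lceil\tfrac{m-n}{2}\rceil$, and $a+b=m$. Since $p_l<a$, $m-p_r\le b$ and $p_l,m-p_r\ge0$,
$$\dz-\sum_{i=1}^{p_l}i-\sum_{i=1}^{m-p_r}i=\sum_{i=p_l+1}^{a}i+\sum_{i=m-p_r+1}^{b}i\ \ge\ \sum_{j=1}^{a-p_l}j+\sum_{j=1}^{b-(m-p_r)}j,$$
and $(a-p_l)+\bigl(b-(m-p_r)\bigr)=(a+b)-p_l-(m-p_r)=\gamma$. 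As $a-p_l\ge n\ge2$: if $b-(m-p_r)\ge1$ then Fact~\ref{fac:split_sum} bounds the right-hand side below by $\lfloor\tfrac\gamma2\rfloor\lceil\tfrac\gamma2\rceil$; if $b-(m-p_r)=0$ it equals $\binom{\gamma+1}{2}\ge\lfloor\tfrac\gamma2\rfloor\lceil\tfrac\gamma2\rceil$. Either way $d(v,0)\le\dz\le\dnm$, which finishes the proof.

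The step I expect to demand the most care is the bucket case: one must recognize that a vertex whose central interval abuts a bucket and has zero central sum becomes, after flipping signs, exactly a vertex of the kind already handled by Lemma~\ref{lem:middle_interval_is_n} (central sum $n$), so that no fresh argument is needed there. Everything else is routine bookkeeping with triangular numbers, the only delicate point being the boundary value $b-(m-p_r)=0$, which is absorbed by the trivial estimate $\binom{\gamma+1}{2}\ge\lfloor\tfrac\gamma2\rfloor\lceil\tfrac\gamma2\rceil$.
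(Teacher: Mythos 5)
Your proof is correct and follows essentially the same route as the paper: rule out the case of two outer pivots, reduce the one-outer-pivot case to Lemma~\ref{lem:middle_interval_is_n} via the sign-flip $\mu$, and in the interior case combine Lemma~\ref{lem:upper_middle_is_zero} with Fact~\ref{fac:split_sum} against $\dz$. Your explicit treatment of the degenerate summand $b-(m-p_r)=0$ (where Fact~\ref{fac:split_sum}, stated for positive integers, does not literally apply) is in fact slightly more careful than the paper's own argument.
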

\begin{proof}
	If both $p_l$ and $p_r$ are outer pivots, then both buckets are non-empty and $\sum_{i\in I_v}v_i=n$, since, in this case, $v$ has no pivots other than $p_l$ and $p_r$. Contradicting the assumption $\sum_{i\in I_v}v_i=0$.
	Assume that one of $\{p_l,p_r\}$, say $p_l$, is an outer pivot (and that the other pivot, $p_r$, is an inner pivot).
	Let $\mu$ be the automorphism of $\Znm$ in Definition \ref{def:aut_mu}.
	Note that by Convention \ref{conv:cosets_are_integers}, if a bucket of $v\in\Znm$ is non-empty (and positive), then it is also non-empty (and positive) in $\mu(v)$.
	Moreover, $I_c(v)=I_c(\mu(v))$.
	This implies that $\sum_{i\in I_c(\mu(v))}\mu(v)_i=n$.
	Therefore, since $0$ is a fixed point of $\mu$, by Lemma \ref{lem:middle_interval_is_n}, $d(v,0)=d(\mu(v),0)\leq \dnm$ as required.
	
	Otherwise $I_c(v)\subseteq[1,m]$.
	By Lemma \ref{lem:upper_middle_is_zero}, 
	$d({v},0) \leq % \ps_{p_l}(\overline{v}) = \ps_{p_r}(\overline{v}) \leq
	\sum_{i=1}^{p_l}i + \sum_{i=1}^{m-p_r}i + \lfloor\frac{\Icwidth}{2}\rfloor\lceil\frac{\Icwidth}{2}\rceil$ where $\Icwidth=|I_c(v)|$.
	Let $p=\lfloor\frac{m+n}{2}\rfloor$ so that $\dz=\sum_{i=1}^p i + \sum_{i=1}^{m-p} i$.
	Note that $|p-\frac{m}{2}|\leq \hnm$.
	By assumption, $p_l\leq p\leq p_r$. Therefore
	\begin{align*}
	\dnm - d(v,0) \geq
	\dz - d(v,0) \geq 
	\sum_{i=p_l+1}^p i + \sum_{i=m-p_r+1}^{m-p} i - \lfloor\frac{\Icwidth}{2}\rfloor\lceil\frac{\Icwidth}{2}\rceil.
	\end{align*}
	$\sum_{i=p_l+1}^p i + \sum_{i=m-p_r+1}^{m-p} i \geq \sum_{i=1}^{p-p_l} i + \sum_{i=1}^{p_r-p} i$.
	Let $a=p-p_l$, let $b=p_r-p$ and note that $a+b=\Icwidth$. Therefore, by Fact \ref{fac:split_sum},
	$
	\dnm - d(v,0) \geq \sum_{i=1}^a i + \sum_{i=1}^{b} i - \lfloor\frac{\Icwidth}{2}\rfloor\lceil\frac{\Icwidth}{2}\rceil>0.
	$
\end{proof}

\begin{theorem}\label{thm:ecc_n_leq_m}
	Let $n, m$ be two integers where $2\leq n\leq m$. 
	If $2\divides (m-n)$ or $n\leq\lceil\frac{m+1}{2}\rceil$, then 
	$$\ecc_{\Znm}(0) = \dz = \binom{\lfloor\frac{m+n}{2}\rfloor+1}{2} + \binom{\lceil\frac{m-n}{2}\rceil+1}{2}.$$
	Otherwise, 
	$$\ecc_{\Znm}(0) = \doo = \dz + n-\lceil\frac{m+1}{2}\rceil.$$
\end{theorem}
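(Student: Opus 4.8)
The plan is to establish $\ecc_{\Znm}(0)=\dnm$ and then to rewrite $\dnm$ in the closed form claimed in the statement. For the lower bound, note that $\uz$ (Definition \ref{def:uz}) is a vertex of $\Znm$, and that $\uo$ (Definition \ref{def:uo}) is a vertex of $\Znm$ whenever $1<n<m$, in particular whenever $2\ndivides(m-n)$, since then $m-n\neq 0$ so $n<m$. As $d(\uz,0)=\dz$ and $d(\uo,0)=\doo$ by definition, it follows that $\ecc_{\Znm}(0)\ge\dz$ always and that $\ecc_{\Znm}(0)\ge\max\{\dz,\doo\}$ when $2\ndivides(m-n)$; in either case $\ecc_{\Znm}(0)\ge\dnm$.

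For the upper bound, fix an arbitrary $v\in\Znm$ and set $S=\sum_{i\in I_c(v)}v_i$. Since $I_c(v)$ is the interval between two consecutive pivots of $v$, an elementary argument about the partial sums $\sum_{i=p_l(v)+1}^{j}v_i$ for $j\in I_c(v)$ — each changes by at most $1$ from its predecessor unless a bucket is added, $I_c(v)$ contains at most one bucket, and none of these sums is divisible by $n$ before the right endpoint of $I_c(v)$ — shows that $S\in\{-n,0,n\}$. Moreover, if $S=-n$ then $I_c(v)\subseteq[1,m]$ by Observation \ref{obs:neg_Ic_to_positive}, and since the automorphism $\mu$ of Definition \ref{def:aut_mu} fixes $0$, satisfies $\piv(\mu(v))=\piv(v)$ (so it preserves $h$, $p_l$, $p_r$ and $I_c$), and negates $S$, replacing $v$ by $\mu(v)$ reduces this case to $S=n$. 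Hence we may assume $S\in\{0,n\}$ and split into three exhaustive cases: if $h(v)<\hnm$ then $d(v,0)\le\dz\le\dnm$ by Lemma \ref{lem:close_to_middle_pivot}; if $h(v)\ge\hnm$ and $S=n$ then $d(v,0)\le\dnm$ by Lemma \ref{lem:middle_interval_is_n}; and if $h(v)\ge\hnm$ and $S=0$ then $d(v,0)\le\dnm$ by Lemma \ref{lem:zero_middle_interval}. Therefore $\ecc_{\Znm}(0)\le\dnm$, and combined with the lower bound $\ecc_{\Znm}(0)=\dnm$.

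It remains to unwind $\dnm$. If $2\divides(m-n)$, then $\dnm=\dz=\binom{\lfloor\frac{m+n}{2}\rfloor+1}{2}+\binom{\lceil\frac{m-n}{2}\rceil+1}{2}$ directly from the definition of $\dnm$ and Lemma \ref{lem:dist_of_uz}. If $2\ndivides(m-n)$, then $m+n$ and $m-n$ are both odd, and substituting $\lceil\frac{m+n}{2}\rceil=\lfloor\frac{m+n}{2}\rfloor+1$, $\lfloor\frac{m-n}{2}\rfloor=\lceil\frac{m-n}{2}\rceil-1$ and $\lfloor\frac{m+n}{2}\rfloor-\lceil\frac{m-n}{2}\rceil=n-1$ into the formulas of Lemma \ref{lem:dist_of_uz} gives $\doo=\dz+n-\lceil\frac{m+1}{2}\rceil$; hence $\dnm=\max\{\dz,\doo\}$ equals $\dz$ exactly when $n\le\lceil\frac{m+1}{2}\rceil$ and equals $\dz+n-\lceil\frac{m+1}{2}\rceil$ otherwise. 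Collecting the two parity cases yields precisely the two alternatives of the statement. The heavy lifting is already in Lemmas \ref{lem:close_to_middle_pivot}, \ref{lem:middle_interval_is_n} and \ref{lem:zero_middle_interval}; I expect the only delicate points to be the trichotomy $S\in\{-n,0,n\}$ together with checking that $\mu$ really carries the case $S=-n$ into the scope of Lemma \ref{lem:middle_interval_is_n}, plus the arithmetic confirming $\doo>\dz$ precisely when $n>\lceil\frac{m+1}{2}\rceil$.
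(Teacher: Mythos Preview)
Your proof is correct and follows essentially the same route as the paper: lower bound from $\uz$ and $\uo$, reduction via $\mu$ to $\sum_{i\in I_c}v_i\in\{0,n\}$, the three-case split through Lemmas \ref{lem:close_to_middle_pivot}, \ref{lem:middle_interval_is_n}, \ref{lem:zero_middle_interval}, and the arithmetic identity $\doo-\dz=n-\lceil\frac{m+1}{2}\rceil$ to unpack $\dnm$. One small slip: your parenthetical claim that ``$I_c(v)$ contains at most one bucket'' is false when $v$ has no inner pivots (then $I_c=[0,m+1]$), but in that case the partial sums force $S=n$ anyway, so the trichotomy $S\in\{-n,0,n\}$ survives and the argument goes through.
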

\begin{proof}
	Note that if $2\ndivides (m-n)$, then, by Lemma \ref{lem:dist_of_uz}, $\doo - \dz = n-\lceil\frac{m+1}{2}\rceil.$
	Therefore, the theorem merely states that $\ecc_{\Znm}(0)=\dnm$ when $n\leq m$.
	By Lemma \ref{lem:dist_of_uz}, $\ecc_{\Znm}(0)\geq \dnm$.
	
	Let $v\in \Znm$.
	By Observation \ref{obs:neg_Ic_to_positive}, we can assume that $\sum_{i\in I_c}v_i\in\{0, n\}$, since $0$ is a fixed point of the automorphism $\mu$ in Definition \ref{def:aut_mu}.
	Therefore, by Lemmas \ref{lem:close_to_middle_pivot}, \ref{lem:middle_interval_is_n} and \ref{lem:zero_middle_interval}, $d(v,0)\leq \dnm$ and $\ecc_{\Znm}(0)\leq \dnm$.
\end{proof}

At this point, we can combine our results to prove the main theorem of this paper.

\begin{proof}[\sc Proof of Theorem \ref{thm:yoke_graph_diam}]
	By Theorem \ref{thm:ecc_eq_diam}, for every Yoke graph $\Ynm$, the diameter of $\Ynm$ is equal to the eccentricity of $0$ in the corresponding dYoke graph $\Znm$.
	
	In Observations \ref{obs:corner_case_meqz}, \ref{obs:corner_case_neqo} and Theorems \ref{thm:ecc_n_geq_m}, \ref{thm:ecc_n_leq_m}, the eccentricity of $0$ in $\Znm$ is shown to be equal to the value of the diameter of $\Ynm$ as stated in this theorem.
\end{proof}

\section{Additional Problems}\label{sec:last_section}
A simpler version of the calculation of $\ecc_{\Znm}(0)$ can be used to calculate $\ecc_{\Ynm}(0)$ and show that $\ecc_{\Ynm}(0)=\ecc_{\Znm}(0)$. 
Therefore a direct proof that $0$ is an antipode in $\Ynm$ might provide a shorter proof for the diameter of $\Ynm$.
\begin{question}
	Is it possible to show, without using $\Znm$, that $0$ is an antipode in $\Ynm$?
\end{question}

It stands out that the three examples of flip graphs that motivated the definition of Yoke graphs and this paper, are all Yoke graphs that belong to the case $m<n$.

\begin{question}
	Are there any "interesting" examples of Yoke graphs for the case $n<m$?
\end{question}

\section{Acknowledgments}
This work is part of the author's Ph.D. thesis written under the supervision of Prof. Ron M. Adin and Prof. Yuval Roichman. The author would like to thank Dr. Luie Jennings, Dr. Menachem Shlossberg and Dr. Arnon Netzer for many helpful discussions and suggestions.

% ---------------------------------------------------------------
% bibliography
% ---------------------------------------------------------------

\end{document}